\newtheorem*{theorema}{Theorem A}
\newtheorem*{theoremb}{Theorem B}
\newtheorem*{theoremc}{Theorem C}
\newtheorem*{theoremd}{Theorem D}
\newtheorem{lemma}{Lemma}[section]
\newtheorem{sublemma}[lemma]{Sublemma}
\newtheorem{cor}[lemma]{Corollary}
\newtheorem{prop}[lemma]{Lemma}
\newtheorem{definition}[lemma]{Definition}
\begin{document}
\author{hiroki takahasi}

\address{Department of Mathematics,
Keio University, Yokohama,
223-8522, JAPAN} 
\email{hiroki@math.keio.ac.jp}
\subjclass[2010]{37D25, 37E30, 37G25}

\title[Lyapunov spectrum for H\'enon-like maps] 
{Lyapunov spectrum for H\'enon-like maps \\
at the first bifurcation}

\begin{abstract}
For a strongly dissipative H\'enon-like map at the first bifurcation parameter
at which the uniform hyperbolicity is destroyed by the formation of tangencies inside the limit set,
we effect a multifractal analysis, i.e.,
  decompose the set of non wandering points on the unstable manifold 
  into level sets of an unstable Lyapunov exponent, and 
  give a partial description of the Lyapunov spectrum which encodes this decomposition.
  We derive a formula for
the Hausdorff dimension of the level sets in terms of the
entropy and unstable Lyapunov exponent of invariant probability measures, and show the continuity of the Lyapunov spectrum.
We also show that the set of points for which the unstable Lyapunov exponents do not exist carries a full Hausdorff dimension.
\end{abstract}

\maketitle

\section{introduction}

In the study of chaotic dynamical systems, one often encounters invariant sets with complicated geometric structures.
The multifractal analysis treats the so-called multifractal decomposition of these sets,
and the associated multifractal spectrum which encodes the decomposition.
The goal is to relate the spectrum to other characteristics
of the system, such as entropy and Lyapunov exponents of invariant measures, 
and to study the regularity of the spectrum, for instance, convexity, smoothness  and analyticity. 
With this study one tries to get more refined descriptions of the dynamics than purely stochastic considerations.

The cases of conformal or uniformly hyperbolic systems
are well understood \cite{BarSch00,Ols03,Pes97,PesWei97,Wei99}, and a complete picture is emerging.
For one-dimensional maps, several progresses have been made to relax these assumptions:
allowing parabolic fixed points \cite{GelRam09,JohJorObePol10,Nak00}; allowing critical points 
\cite{Chu10,ChuTak13,GelPrzRam10,IomTod11,PrzRiv11}. 
Nevertheless, little is known on higher dimensional systems.
Indeed, one can mention interesting recent developments \cite{BarIom11,UrbWol08} on two-dimensional 
{\it parabolic horseshoes.}
In these papers, however, 
the existence of global continuous invariant foliations are assumed, which
allows one to reduce a considerable part of the analysis 
to one-dimensional dynamics.
To our knowledge, there is no previous result on the multifractal analysis of two-dimensional maps
having tangencies of invariant manifolds.
This type of maps admit no global continuous invariant foliation,
and so new arguments and ideas are necessary to reduce to one-dimensional dynamics.

In this paper we are concerned with a family of planar diffeomorphisms
\begin{equation}\label{henon}
f_a\colon(x,y)\in\mathbb R^2\mapsto(1-ax^2,0)+b\cdot\Phi(a,b,x,y),\quad a\in\mathbb R, \ 0<b\ll1,\end{equation}
where $\Phi$ is bounded continuous in $(a,b,x,y)$ and 
$C^2$ in $(a,x,y)$. 
We assume\footnote{Condition \eqref{jacobian} is used exclusively in the proof of Lemma \ref{global}. See \cite{SenTak2}.} there exists a constant $C>0$ such that for all $a$ near $2$
and small $b$,
\begin{equation}\label{jacobian}\|D\log|\det Df_{a}|\|\leq C.\end{equation}
This family of diffeomorphisms has a fundamental importance in the creation of the theory of non-uniformly hyperbolic strange attractors 
\cite{BenCar91,MorVia93,WanYou01}. 
A relevant problem is to study the dynamics at a \emph{first bifurcation parameter} $a^*=a^*(b)\in\mathbb R$.
This parameter does not belong to the parameter sets of positive Lebesgue measure constructed in \cite{BenCar91,MorVia93,WanYou01}, and
satisfy the following
properties  \cite{BedSmi06,CLR08,DevNit79,Tak13}:

\begin{itemize}

\item $a^*\to2$ as $b\to0$;

\item the non wandering
set of $f_a$ is a uniformly hyperbolic horseshoe for $a>a^*$ ;

\item for $a=a^*$ there is a single orbit of homoclinic or heteroclinic tangency involving (one of)
the two fixed saddles. The tangency is quadratic, and the family $\{f_a\}_{a\in\mathbb R}$ unfolds this tangency generically.

\end{itemize}
Let $P$, $Q$ denote the fixed saddles of $f$ near $(1/2,0)$, $(-1,0)$ respectively.
The orbit of tangency intersects a small neighborhood of the origin exactly at one point, denoted by $\zeta_0$ (FIGURE 1).
If $f_{a^*}$ preserves orientation, then $\zeta_0\in W^s(Q)\cap W^u(Q)$.
If $f_{a^*}$ reverses orientation, then $\zeta_0\in W^s(Q)\cap W^u(P)$.
The map $f_{a^*}$ falls into the class of {\it non-uniformly hyperbolic systems}.
The sole obstruction to the uniform hyperbolicity is the orbit of the tangency $\zeta_0$.

The aim of this paper is to perform the multifractal analysis of $f_{a^*}$, in particular to
study its \emph{Lyapunov spectrum}.
  Although some aspects of the dynamics of $f_{a^*}$ resemble the horseshoe before the first bifurcation,
the presence of tangency is an intrinsic hurdle 
for understanding the global dynamics.

\begin{figure}
\begin{center}
\includegraphics[height=6.5cm,width=14cm]
{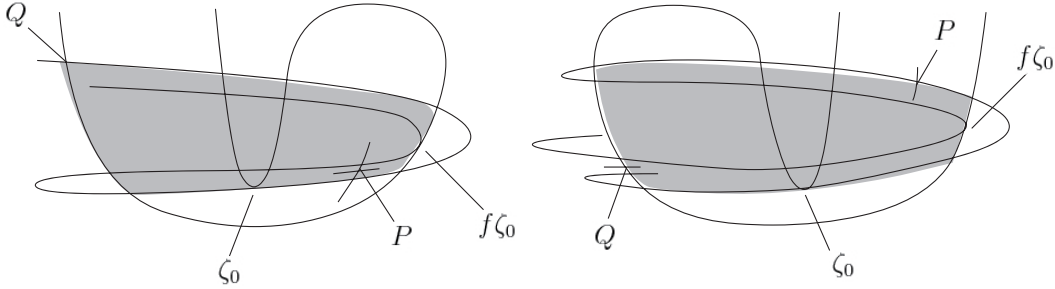}
\caption{Manifold organization for $a=a^*$:
orientation preserving/reversing cases (left/right).
The shaded domains represent the rectangle $R$ (see Sect.\ref{family})
containing the non wandering set $\Omega$.}
\end{center}
\end{figure}

We state our settings in more precise terms.
Write $f$ for $f_{a^*}$. 
At a point $x\in\mathbb R^2$ define a
one-dimensional subspace $E_x^u$ of $T_x\mathbb R^2$ 
which is exponentially contracted by backward iterates:
\begin{equation*}
\limsup_{n\to\infty}\frac{1}{n}\log\|D_xf^{-n}|E^u_x\|<0.\end{equation*}
Since $f^{-1}$ expands area, the one-dimensional subspace of $T_x\mathbb R^2$ with this property
is unique, when it makes sense. We call $E^u_x$ an \emph{unstable direction} at $x$, and
define an {\it unstable Jacobian} at $x$ by
$J^u(x)=\Vert D_xf|E^u_x\Vert.$
Let $\Omega$ denote
the non wandering set of $f$, which is a compact set. 
By a result of \cite{SenTak1}, $E^u_x$ makes sense for any $x\in\Omega$, and 
$x\mapsto E_x^u$ is continuous on $\Omega$ except at $Q$ where it is merely measurable.

For $x\in\Omega$ define
$$\underline{\lambda}^u(x)=\liminf_{n\to\infty}\frac{1}{n}\sum_{i=0}^{n-1}\log J^u(f^ix)
\ \text{ and }\ \bar{\lambda}^u(x)=\limsup_{n\to\infty}\frac{1}{n}\sum_{i=0}^{n-1}\log J^u(f^ix).$$
If both values coincide, then call this common value an
\emph{unstable Lyapunov exponent at $x$} and
denote it by $\lambda^u(x)$.
Since
the (non-uniform) expansion along the unstable direction is responsible for the chaotic behavior,
 the distribution of the unstable Lyapunov exponent 
 is important for understanding the dynamics of $f$.

If $f$ preserves orientation, let $W^u=W^u(Q)$. Otherwise,
let $W^u=W^u(P)$.
A good deal of information is contained in the unstable slice
$$\Omega^u=\Omega\cap W^u.$$
For each $\beta\in\mathbb R$ consider the level set
$$\Omega^u(\beta)=\left\{x\in \Omega^u\colon\text{$\lambda^u(x)$ is defined and }\lambda^u(x)=\beta\right\}.$$
The first question to ask is what are the values of $\beta$ for which $\Omega^u(\beta)\neq\emptyset$.
For uniformly hyperbolic systems as in the case $a>a^*$, such values are all positive and form a compact interval.
One can easily see that this is not the case for $f=f_{a^*}$, because $\lambda^u(\zeta_0)<0$.

Let 
$\mathcal M(f)$ denote the set of $f$-invariant
Borel probability measures.
An {\it unstable Lyapunov exponent} of a measure $\mu\in\mathcal M(f)$ is the number $\lambda^u(\mu)$
defined by 
$$\lambda^u(\mu)=\int\log J^ud\mu.$$
Set
$$\lambda_m^u=\inf\{\lambda^u(\mu)\colon \mu\in\mathcal M(f)\}\ \text{ and }\ \lambda_M^u=\sup\{\lambda^u(\mu)\colon \mu\in\mathcal M(f)\}.$$
By a result of \cite{CLR08}, $\lambda_m^u>0$. 
Since any measure is supported on the compact set $\Omega$,
$\lambda_M^u<\infty$.
 Set
$I=[\lambda_m^u,\lambda_M^u].$

\begin{theorema}
Let $b>0$ be sufficiently small and
 $f=f_{a^*(b)}$ as above. Then
 $\Omega^u(\beta)\neq\emptyset$ if and only if 
$\beta\in\{\lambda^u(\zeta_0)\}\cup I$.
 \end{theorema}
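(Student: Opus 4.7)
The plan is to prove the two inclusions separately, treating the exceptional point $\zeta_0$ on its own. For the ``if'' direction with $\beta=\lambda^u(\zeta_0)$, the point $\zeta_0$ itself works: it lies on $W^u$ by the definition of the tangency, and its forward orbit is bounded so $\zeta_0\in\Omega$, giving $\zeta_0\in\Omega^u$. To see that $\lambda^u(\zeta_0)$ is defined, observe that $f^i\zeta_0\to Q$ while $E^u_{f^i\zeta_0}$, being the tangent to the branch of $W^u$ through $\zeta_0$ (which is tangent to $W^s(Q)$ at $\zeta_0$), stays close to the stable direction $E^s_Q$ along the forward orbit; hence $\log J^u(f^i\zeta_0)$ converges to a negative limit, which is exactly $\lambda^u(\zeta_0)$.

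For $\beta\in I$, I would work in the symbolic coding of $\Omega$ from \cite{SenTak1}, in which points of $W^u\cap\Omega$ correspond to bi-infinite admissible sequences whose negative coordinates are eventually the symbol of the relevant saddle. The first step is to show that the unstable Lyapunov exponents of periodic orbits of $f|_\Omega$ form a dense subset of $I$: this combines density of periodic points in $\Omega$ with an intermediate-value argument on exponents along convex combinations of two chosen periodic measures. Given $\beta\in I$, one then constructs an admissible sequence by concatenating, in carefully chosen proportions, long segments of periodic orbits whose exponents bracket $\beta$; fixing the negative coordinates to the saddle symbol places the resulting point in $W^u\cap\Omega$, while the Cesaro average of $\log J^u$ along the forward orbit is driven to $\beta$ by construction.

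For the ``only if'' direction, take $x\in\Omega^u$ with $\lambda^u(x)=\beta$. If the forward orbit of $x$ meets the orbit of $\zeta_0$, invariance of the Cesaro average under a finite shift yields $\beta=\lambda^u(\zeta_0)$. Otherwise, let $\mu$ be any weak-$*$ accumulation point in $\mathcal M(f)$ of the empirical measures $\mu_n=\frac{1}{n}\sum_{i=0}^{n-1}\delta_{f^i x}$; one aims to conclude $\beta=\int\log J^u\,d\mu\in I$.

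The principal obstacle is this last passage to the limit: $\log J^u$ is only measurable at $Q$, so weak-$*$ convergence does not automatically hand over the integrals. The crux will be to verify that every such $\mu$ gives zero mass to $Q$, which amounts to showing that the forward orbit of $x$ visits shrinking neighborhoods of $Q$ with asymptotically vanishing frequency; this is expected to follow from the hyperbolic estimates for $f|_{\Omega\setminus\mathrm{orbit}(\zeta_0)}$ developed in \cite{SenTak1,SenTak2}, together with a bounded-distortion argument allowing one to replace $\log J^u$ by a continuous function outside a small neighborhood of $Q$ without affecting the asymptotic integral against $\mu_n$. Once that is established, the integrals pass to the limit and $\beta\in I$.
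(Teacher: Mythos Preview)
Your ``if'' direction is essentially sound and matches the paper's strategy: the point $\zeta_0$ handles $\beta=\lambda^u(\zeta_0)$, and for $\beta\in I$ the paper likewise builds points by concatenating long blocks coming from approximating horseshoes (its Lemma \ref{lowd}, fed by Katok-type approximations in Lemma \ref{katok} and Lemma \ref{approximate}). Your periodic-concatenation sketch is a valid variant, though you should note that the paper carries this out inside the induced system of proper rectangles, which is what guarantees the resulting point lands in $\Omega\cap W^u$ with the claimed exponent.

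The genuine gap is in the ``only if'' direction. Your plan is to show that every empirical limit $\mu$ satisfies $\mu\{Q\}=0$, and then pass $\int\log J^u$ to the limit to get $\beta=\lambda^u(\mu)\in I$. This does not work as stated, for two reasons. First, $\mu\{Q\}=0$ need not hold: a point $x\in\Omega^u$ can spend an asymptotically positive fraction of time near $Q$ (the condition governing membership in $G_m$ concerns recurrence to the critical region $I(\delta)$ near the origin, not to $Q$ near $(-1,0)$), and the paper never asserts or uses such a claim. Second, and more fundamentally, you have skipped the key dichotomy the paper establishes in Lemma \ref{new}: if $\lambda^u(x)$ exists at all, then $x\in\bigcup_m G_m$, i.e.\ the recurrence of $x$ to the critical region is controlled. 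Without this, there is no mechanism ensuring that $E^u_{f^ix}$ remains near the genuine unstable direction when $f^ix$ is near $Q$; along orbits with deep critical returns, $E^u$ can align with $E^s_Q$, driving $\log J^u$ arbitrarily negative, and your empirical-measure argument cannot rule this out.

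The paper's resolution is to separate the two inequalities. Once $x\in G_m$ is known, the upper bound $\lambda^u(x)\le\lambda_M^u$ follows from an upper semi-continuity estimate (Lemma \ref{sentak}) that explicitly allows $\mu\{Q\}>0$ by splitting $\mu=u\delta_Q+(1-u)\nu$ and controlling the contribution of excursions near $Q$ segment by segment. The lower bound $\lambda^u(x)\ge\lambda_m^u$ bypasses empirical measures entirely: Lemma \ref{new2} places $x$ in a nested sequence of proper rectangles, each containing a periodic point, and the bounded distortion of Lemma \ref{lyapbdd} (which requires precisely the hypothesis $x\in G_m$) transfers the exponent of those periodic measures to $x$. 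You should replace the speculative $\mu\{Q\}=0$ step with this two-sided argument.
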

 
 The number
$\lambda^u(\zeta_0)$ equals the stable Lyapunov exponent
of the Dirac measure at $Q$, and so $\lambda^u(\zeta_0)\to-\infty$ as $b\to0$.
 The interval $I$ does not degenerate to a point as $b\to0$,
because the unstable Lyapunov exponents of the Dirac measures 
at $P$ and $Q$ converge to $\log2$ and
$\log4$ respectively. 
In fact, one can show that $\lambda_m^u\to\log2$ and $\lambda_M^u\to\log4$ as $b\to0$.

A proof of Theorem A relies on the fact that
$a^*\to2$ as $b\to0$, and so $f=f_{a^*}$ may be viewed as a singular perturbation of the endomorphism $(x,y)\mapsto (1-2x^2,0)$.
 However, the multifractal picture is quite in contrast to that of the quadratic map $x\in[-1,1]\to1-2x^2$.
The Lyapunov exponent of the quadratic map takes only three values: it is $\log 4$ at the repelling fixed point $-1$ and its preimage $1$,
$-\infty$ at the preimages of $0$, and is $\log 2$ at all other well-defined points.

Now, consider a multifractal decomposition
\begin{equation*}\label{decomposition}\Omega^u
=\left(\bigcup_{\beta\in\{\lambda^u(\zeta_0)\}\cup I}
\Omega^u(\beta)\right)\cup \hat \Omega^u,\end{equation*} 
where 
$\hat \Omega^u$ denotes the set of those $x\in\Omega^u$ for which 
$\underbar{$\lambda$}^u(x)\neq\bar{\lambda}^{u}(x)$ and so $\lambda^u(x)$ is undefined.
This decomposition has an extremely complicated topological structure. One can show that if $\beta\in  I$, then
$\Omega^u(\beta)$ is dense in $\Omega^u$ with respect to the induced topology on $W^u$.


To evaluate the size of each level set we adopt the Hausdorff dimension on $W^u$ defined as follows.
 Given $p\in(0,1]$
 the unstable Hausdorff 
$p$-measure of a set $A\subset W^u$ is defined by
$$m_p^u(A)=\lim_{\varepsilon\to0}\left(\inf\sum_{U\in\mathcal
U } {\rm length}(U)^p \right),$$ where ${\rm length}(\cdot)$ denotes the length on $W^u$ with respect to the induced Riemannian metric,
and the infimum is taken over all
countable coverings $\mathcal U$ of $A$ by open sets of $W^u$ with
length $\leq\varepsilon$. The unstable Hausdorff dimension of $A$,
denoted by $\dim_H^u$, is the unique number in $[0,1]$ such that
$$\dim_H^u(A)=\sup\{p\colon
m_p^u(A)=\infty\}=\inf\{p\colon m_p^u(A)=0\}.$$

\begin{figure}
\begin{center}
\includegraphics[height=6cm,width=9cm]
{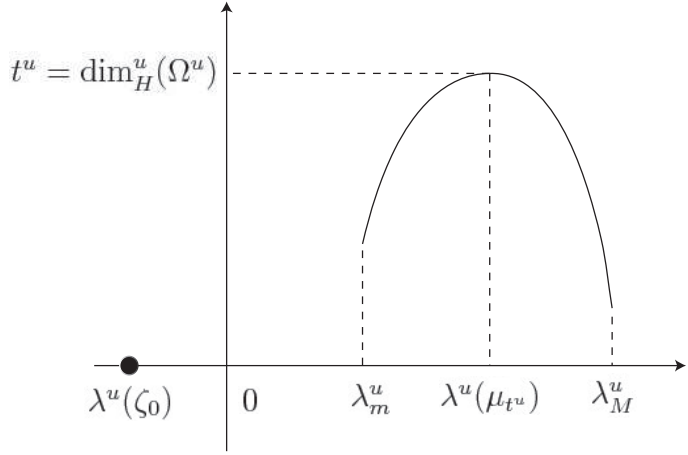}

\caption{Schematic picture of the graph of the Lyapunov spectrum $L^u\colon\{\lambda^u(\zeta_0)\}\cup I\to\mathbb R$.}

\end{center}
\end{figure}

\noindent Set
$$L^u(\beta)=\dim_H^u(\Omega^u(\beta)).$$
The object of our study is
the function $\beta\mapsto L^u(\beta)$, called a {\it Lyapunov spectrum}.

We give a formula for $L^u(\beta)$
in terms of the unstable Lyapunov exponents and entropy of invariant probability measures.
The entropy of $\mu\in\mathcal M(f)$ is denoted by  $h(\mu)$.
\begin{theoremb}
For any
$\beta\in I$, 
\begin{equation*}L^u(\beta)=\lim_{\varepsilon\to0}{\sup}\left\{\frac{h(\mu)}{\lambda^u
(\mu)}\colon\mu\in\mathcal M(f),\ \left|\lambda^u(\mu)
-\beta\right|<\varepsilon\right\}.\end{equation*}
 \end{theoremb}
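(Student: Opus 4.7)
The plan is to prove the two inequalities $L^u(\beta)\le S(\beta)$ and $L^u(\beta)\ge S(\beta)$ separately, where
$$S(\beta)=\lim_{\varepsilon\to0}\sup\left\{\frac{h(\mu)}{\lambda^u(\mu)}:\mu\in\mathcal M(f),\ |\lambda^u(\mu)-\beta|<\varepsilon\right\}$$
denotes the right-hand side of the claimed formula.

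For the upper bound, fix $\delta>0$ and, for each $N\in\mathbb N$, set
$$A_N^\delta=\left\{x\in\Omega^u:\left|\frac1n\sum_{i=0}^{n-1}\log J^u(f^ix)-\beta\right|<\delta\text{ for all }n\ge N\right\}.$$
Since $\Omega^u(\beta)=\bigcup_N A_N^\delta$, it suffices to control $\dim_H^u(A_N^\delta)$. I would cover $A_N^\delta$ by the traces on $W^u$ of dynamical $(n,r)$-balls; by bounded distortion of $\log J^u$ along $W^u$ (which should follow from the authors' prior analysis of the unstable lamination of $f_{a^*}$), a level-$n$ piece through $x\in A_N^\delta$ has length comparable to $e^{-n\beta+O(n\delta)}$. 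A Katok-type entropy estimate bounds the number of pieces needed to cover $A_N^\delta$ by $\exp\bigl(n(s_\delta+o(1))\bigr)$, where $s_\delta=\sup\{h(\mu):|\lambda^u(\mu)-\beta|<\delta\}$. Summing, the unstable Hausdorff $p$-measure of $A_N^\delta$ is finite whenever $p>s_\delta/(\beta-\delta)$, so $L^u(\beta)\le s_\delta/(\beta-\delta)$. Sending $\delta\to0$ gives $L^u(\beta)\le S(\beta)$.

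For the lower bound, fix $\varepsilon>0$ and choose an ergodic $\mu\in\mathcal M(f)$ with $|\lambda^u(\mu)-\beta|<\varepsilon$ and $h(\mu)/\lambda^u(\mu)$ within $\varepsilon$ of $S(\beta)$. By a Ledrappier--Young-type relation for $f_{a^*}$, the conditional dimension of $\mu$ on local unstable manifolds equals $h(\mu)/\lambda^u(\mu)$. However, $\mu$-typical points have Lyapunov exponent $\lambda^u(\mu)\ne\beta$ and therefore do \emph{not} belong to $\Omega^u(\beta)$, so one cannot just project $\mu$ to $W^u$. My plan is to pick a second measure $\nu\in\mathcal M(f)$ with $\lambda^u(\nu)$ on the opposite side of $\beta$ from $\lambda^u(\mu)$, and then use the horseshoe-like Markov structure of $\Omega$ (available away from the tangency) to build a Moran-type Cantor subset of $W^u$ whose points alternately shadow long Birkhoff-typical orbits of $\mu$ and $\nu$, with relative time fractions tuned so that the Birkhoff average of $\log J^u$ along every point of the Cantor set converges to $\beta$. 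This Cantor set lies in $\Omega^u(\beta)$ and carries a natural mass distribution whose local upper dimension is everywhere at most $h(\mu)/\lambda^u(\mu)+o(1)$ as the shadowing parameters shrink. A mass-distribution principle then yields $L^u(\beta)\ge h(\mu)/\lambda^u(\mu)-o(1)$, and letting $\varepsilon\to0$ closes the argument.

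The hardest step is certainly the lower bound, and within it the control of the unstable Jacobian along orbits that visit the critical region around $\zeta_0$. Because $f_{a^*}$ admits no global smooth invariant foliation and any Markov partition for $\Omega$ is at best countable, the uniform bounded-distortion estimates that one uses freely in the uniformly hyperbolic multifractal analysis must be replaced by tail estimates on the return times to a hyperbolic reference set. Justifying the Ledrappier--Young formula and the uniform local-dimension bound for the Moran measure in this non-uniformly hyperbolic, two-dimensional setting is where the bulk of the work, and the dependence on the authors' earlier structural results about $\Omega$, will have to go; the rest of the scheme is structurally parallel to the classical conformal picture.
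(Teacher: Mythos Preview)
Your lower-bound strategy is broadly in line with the paper's: both build a Moran-type Cantor set inside $\Omega^u(\beta)$ and apply a mass-distribution argument. The paper does this not with two fixed measures but with a \emph{sequence} $\{\mu_n\}$ of ergodic measures with $\lambda^u(\mu_n)\to\beta$ and $h(\mu_n)/\lambda^u(\mu_n)$ approaching the supremum, each approximated by a Katok horseshoe (Lemma~\ref{katok}) realized as a finite family of proper rectangles with common inducing time; the Cantor set is then a nested intersection of longer and longer concatenations of these rectangles (Lemma~\ref{lowd}). Your two-measure variant is in the same spirit and would likely work once the same inducing infrastructure is in place. The Ledrappier--Young invocation is unnecessary and in fact unavailable in this form here.

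The genuine gap is in your upper bound, and you have the difficulty located on the wrong side. You assume ``bounded distortion of $\log J^u$ along $W^u$'' and propose to cover $A_N^\delta$ by dynamical balls of length $e^{-n\beta+O(n\delta)}$. This fails globally: the distortion constant on a proper rectangle $\omega$ depends on how deeply the orbit of $\omega$ enters the critical region near $\zeta_0$, and there is no uniform bound. The paper's key device is a decomposition not by your sets $A_N^\delta$ but by the sets
\[
G_m=\bigl\{x\in\Omega:\ d_{\rm crit}(f^nx)>b^{n/10}\text{ for all }n\ge m\bigr\},
\]
together with the crucial Lemma~\ref{new}: if $x\notin\bigcup_m G_m$ then $\underline{\lambda}^u(x)\neq\bar{\lambda}^u(x)$, so in fact $\Omega^u(\beta)\subset\bigcup_m G_m$. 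On each $G_m$ one has an $m$-dependent distortion constant $D_m$ (Lemma~\ref{lyapbdd}), and the upper estimate is proved \emph{stratum by stratum} (Lemma~\ref{c}) and then assembled by countable stability of $\dim_H^u$. Even on a fixed $G_m$ the paper does not count dynamical balls via a Katok entropy estimate; because $\log J^u$ is discontinuous at $Q$ and the natural partition is countable, it instead codes by a finite family $\mathcal A_{n,\varepsilon}$ of proper rectangles, builds an invariant measure on the associated full shift by a pressure/variational argument, and shows that the covering sum $\sum_{\omega^u}{\rm length}(\omega^u)^{d^u_\varepsilon}$ has nonpositive exponential growth rate (inequality~\eqref{pperd}). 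Your sketch neither anticipates the need for the $G_m$ stratification nor offers a substitute for uniform distortion, and the ``Katok-type entropy estimate'' you invoke for the cardinality of the cover does not apply directly to the discontinuous potential $\log J^u$.
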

Due to the existence of tangency, the unstable Lyapunov exponent as a function of measures may not be lower semi-continuous.
Hence, the limit in $\varepsilon$ is necessary.
 A formula similar to the one in Theorem B 
was obtained in \cite{ChuTak13} for a positive measure set of quadratic maps $x\in[-1,1]\to 1-ax^2$, but only for the time averages of continuous functions.




 We now move on to properties of the Lyapunov spectrum.
  Let us recall the thermodynamic formalism of $f$
 developed in \cite{SenTak1,SenTak2}.
 For $t\in\mathbb R$ define
 $$P(t)=\sup\left\{h(\mu)-t\lambda^u(\mu)\colon\mu\in\mathcal M(f)\right\}.$$
 A measure which attains this supremum is called an \emph{equilibrium measure} for $-t\log J^u$.
The function $t\mapsto P(t)$ is convex. One has $P(0)>0$, and Ruelle's inequality \cite{Rue78} gives $P(1)\leq0$.
Since $f$ has no SRB measure \cite{Tak12}, $P(1)<0$ holds. Hence the equation $P(t)=0$ has a unique solution in $(0,1)$,
denoted by $t^u$.
 There exists a unique equilibrium measure for $-t^u\log J^u$
 (\cite[Theorem A]{SenTak2}),
 denoted by $\mu_{t^u}$, and $t^u=\dim_H^u(\Omega^u)$, $t^u\to1$ as $b\to0$ (\cite[Theorem B]{SenTak2}).

\begin{theoremc}
The following holds for the function
$\beta\in I\mapsto  L^u(\beta)$:
\begin{itemize}
\item[(a)] it is continuous;
\item[(b)] increasing on $[\lambda_m^u,\lambda^u(\mu_{t^u})]$ and
 decreasing on $[\lambda^u(\mu_{t^u}),\lambda_M^u]$;
 \item[(c)] strictly positive in the interior of $I$;
\item[(d)]  $L^u(\beta)=t^u$ if and only if $\beta=\lambda^u(\mu_{t^u})$.
 \end{itemize}
 \end{theoremc}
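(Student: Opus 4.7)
The plan is to reduce Theorem C to the analysis of the concave function
\[
\hat P(\beta):=\inf_{t\in\mathbb R}\bigl(P(t)+t\beta\bigr)
\]
together with the Legendre-type identification
\[
\beta\, L^u(\beta)\;=\;\hat P(\beta)\qquad(\beta\in\mathrm{int}\,I).
\]
The upper bound $\beta L^u(\beta)\le\hat P(\beta)$ is immediate from Theorem B: for any $\mu\in\mathcal M(f)$ and $t\in\mathbb R$, the variational inequality $h(\mu)\le P(t)+t\lambda^u(\mu)$, divided by $\lambda^u(\mu)\ge\lambda_m^u>0$, gives $h(\mu)/\lambda^u(\mu)\le P(t)/\lambda^u(\mu)+t$; letting $\mu$ range over measures with $\lambda^u(\mu)$ close to $\beta$ and applying Theorem B yields $L^u(\beta)\le P(t)/\beta+t$ for every $t$. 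The matching lower bound is the substantive step: for each $\beta\in\mathrm{int}\,I$ I choose a minimizer $t(\beta)$ of $t\mapsto P(t)+t\beta$ and, extending the thermodynamic formalism of \cite{SenTak1,SenTak2} beyond the single parameter $t^u$, construct ergodic measures whose free energy $h(\mu)-t(\beta)\lambda^u(\mu)$ is close to $P(t(\beta))$. Ruelle's tangent-line argument and convexity of $P$ then pin down $\lambda^u(\mu)\approx-P'(t(\beta))=\beta$, and substituting into Theorem B delivers $L^u(\beta)\ge\hat P(\beta)/\beta$.

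Once the identity is in hand, all four assertions reduce to convex analysis. Part (a): $\hat P$, being the infimum of affine functions in $\beta$, is concave and hence continuous on $\mathrm{int}\,I$, and division by $\beta\ge\lambda_m^u>0$ preserves this. Part (b): by the envelope theorem $\hat P'(\beta)=t(\beta)$, so
\[
\frac{d}{d\beta}\!\left(\frac{\hat P(\beta)}{\beta}\right)\;=\;-\,\frac{P(t(\beta))}{\beta^{2}};
\]
since $\lambda_m^u>0$ forces $P'(t)=-\lambda^u(\mu_t)<0$, the function $P$ is strictly decreasing with its unique zero at $t^u$, and the sign of the above derivative matches the sign of $\lambda^u(\mu_{t^u})-\beta$, giving the claimed monotonicity about the critical point $\beta^{*}=\lambda^u(\mu_{t^u})$. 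Part (d): plugging $t=t^u$ into $h(\mu)\le P(t^u)+t^u\lambda^u(\mu)=t^u\lambda^u(\mu)$ yields $L^u(\beta)\le t^u$ uniformly in $\beta$; equality at $\beta=\beta^{*}$ is witnessed by $\mu_{t^u}$ itself, and strict convexity of $P$ at $t^u$, guaranteed by uniqueness of the equilibrium measure \cite[Theorem A]{SenTak2}, forbids equality elsewhere. Part (c): in the interior of $I$ the approximating measures supplied by the previous paragraph can be taken ergodic with strictly positive entropy, and Theorem B then forces $L^u(\beta)>0$.

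I expect the main obstacle to be the lower bound in the Legendre-duality identity, since \cite{SenTak2} only establishes existence of an equilibrium measure at $t^u$; one must either extend that construction to a full neighbourhood of $t^u$ or approximate $P(t)$ by free energies of ergodic measures, and must do so despite the fact that $\mu\mapsto\lambda^u(\mu)$ is only upper, not lower, semicontinuous on $\mathcal M(f)$ (sequences of measures spending increasing time near the tangency orbit $\mathcal O(\zeta_0)$ spoil lower semicontinuity) — this failure is precisely the reason Theorem B carries a $\limsup$ in $\varepsilon$. A secondary difficulty is continuity of $L^u$ at the endpoints $\lambda_m^u$, $\lambda_M^u$ in (a), because concavity of $\hat P$ yields continuity only on the open interval; boundary continuity will require a direct approximation by invariant measures whose Lyapunov exponents accumulate on the extremes while the entropy-to-exponent ratios remain controlled, using the underlying uniform hyperbolicity of the truncated horseshoe structure of $f_{a^*}$ established in \cite{SenTak1}.
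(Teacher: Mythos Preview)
Your Legendre-duality strategy is genuinely different from the paper's. The paper never establishes the identity $\beta L^u(\beta)=\hat P(\beta)$; instead it proves each item by elementary measure-theoretic arguments. For (a) it obtains lower semicontinuity by interpolating with convex combinations $(1-t_n)\mu+t_n\mu_n$, $t_n\to1$; for (b) it proves only the quasi-concavity inequality $\min\{L^u(\beta),L^u(\beta')\}\le L^u(t\beta+(1-t)\beta')$, which together with (d) forces monotonicity on each side of the unique maximizer; for (d) it extracts a weak limit of near-optimal measures and invokes the uniqueness of $\mu_{t^u}$ directly. Your route, if completed, would actually yield more than the paper claims, namely concavity of $\beta\mapsto\beta L^u(\beta)$.

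Your proposed proof of the lower bound in the Legendre identity --- building near-equilibrium measures for every $t$ and using a tangent-line argument to pin down $\lambda^u$ --- is the wrong track: equilibrium measures are available only at $t^u$, and $P$ is not known to be differentiable elsewhere. The identity follows far more cheaply. Both $h$ and $\lambda^u$ are affine on the convex set $\mathcal M(f)$, so $H(\beta):=\sup\{h(\mu):\lambda^u(\mu)=\beta\}$ is concave on $I$, hence continuous on $\mathrm{int}\,I$; Theorem~B then gives $\beta L^u(\beta)=H(\beta)$ there, and standard Fenchel duality yields $H=\hat P$ on the interior. No equilibrium states beyond $\mu_{t^u}$ are needed.

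There remains a genuine gap in your treatment of (d). You appeal to ``strict convexity of $P$ at $t^u$,'' but what is actually required is \emph{differentiability}: $L^u(\beta)=t^u$ forces $-\beta\in\partial P(t^u)$, and this determines $\beta$ uniquely only when $\partial P(t^u)$ is a singleton. The textbook implication ``unique equilibrium $\Rightarrow$ $P$ differentiable'' rests on continuity of the potential, whereas here $\log J^u$ is discontinuous at $Q$ and $\mu\mapsto\lambda^u(\mu)$ fails lower semicontinuity; the implication is therefore not automatic. The paper's proof of (d) confronts exactly this obstacle: given $\mu_n$ with $h(\mu_n)/\lambda^u(\mu_n)\to t^u$ and $\lambda^u(\mu_n)\to\beta$, it passes to a weak limit $\mu$, shows $\mu\{Q\}=0$ (otherwise the conditional part would violate $P(t^u)=0$), and only then invokes \cite[Lemma~4.4]{SenTak1} to recover $\lambda^u(\mu_n)\to\lambda^u(\mu)$ and conclude $\mu=\mu_{t^u}$, $\beta=\lambda^u(\mu_{t^u})$. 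To make your approach work you would have to transplant essentially this argument into a proof that $P'(t^u)$ exists and equals $-\lambda^u(\mu_{t^u})$.
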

 Theorem C illustrates what is sometimes called a {\it multifractal miracle}.
 Even though the multifractal decomposition is topologically complicated, the Lyapunov spectrum 
 which encodes the decomposition is continuous, and has several additional properties.
 
 \medskip
 
\noindent{\it Remark.}
From Theorem C(b), the minimum of $L^u$ is attained at the boundary of $I$. 
It is not known if the minimum is strictly positive.
Nor the convexity of the Lyapunov spectrum is known (See FIGURE 2 with care).
\medskip

The last theorem states that $\hat\Omega^u$ carries a full Hausdorff dimension.
For the subshift of finite type it is known  \cite{BarSch00} that
the set of irregular points for which the time averages of a given continuous function do not converge
carries the full dimension.
Since $\log J^u$ is not continuous, the same argument does not work in our setting.

 \begin{theoremd}
$\dim_H^u(\hat\Omega^u)=t^u.$
\end{theoremd}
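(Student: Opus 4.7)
The upper bound $\dim_H^u(\hat\Omega^u)\leq t^u$ is immediate from $\hat\Omega^u\subset\Omega^u$ together with $\dim_H^u(\Omega^u)=t^u$ recalled just before Theorem C. The substantive content is the reverse inequality, which I propose to establish via the standard Barreira-Schmeling scheme of interleaving two ergodic measures with distinct Lyapunov exponents.

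By Theorem C(a) and (d), the spectrum $L^u$ is continuous on $I$ with strict maximum $t^u$ attained only at $\beta_0=\lambda^u(\mu_{t^u})$. Fix $\delta>0$. Continuity supplies two distinct values $\beta_1,\beta_2\in I$ on opposite sides of $\beta_0$ with $L^u(\beta_i)>t^u-\delta$. The supremum formula of Theorem B, combined with ergodic decomposition (applied to the Birkhoff sum of $\log J^u$), then yields ergodic measures $\mu_1,\mu_2\in\mathcal M(f)$ with $\lambda^u(\mu_1)\neq\lambda^u(\mu_2)$ and
\[
\frac{h(\mu_i)}{\lambda^u(\mu_i)}>t^u-2\delta\qquad(i=1,2).
\]

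The plan is to build a Cantor subset $X\subset\hat\Omega^u$ by concatenating orbit segments in $W^u$ that are $(\varepsilon_k,n_k)$-Birkhoff-generic for $\mu_1$ on odd blocks and for $\mu_2$ on even blocks, with $n_1\ll n_2\ll\cdots$ chosen so that $n_{k+1}/(n_1+\cdots+n_k)\to\infty$. This lacunary choice forces the partial averages $\tfrac1N\sum_{i=0}^{N-1}\log J^u\circ f^i$ to oscillate between neighborhoods of $\lambda^u(\mu_1)$ and $\lambda^u(\mu_2)$, so $X\subset\hat\Omega^u$. Distribute a probability measure $\nu$ on $X$ by spreading mass uniformly among the subcylinders at each level. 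Using Shannon-McMillan-Breiman, the Brin-Katok entropy formula, and bounded distortion along hyperbolic-time segments, a Frostman-type computation yields
\[
\liminf_{r\to0}\frac{\log\nu(B(x,r)\cap W^u)}{\log r}\geq\min_{i=1,2}\frac{h(\mu_i)}{\lambda^u(\mu_i)}-C\delta
\]
for $\nu$-a.e.\ $x\in X$, whence $\dim_H^u(\hat\Omega^u)\geq\dim_H^u(X)\geq t^u-(C+2)\delta$. Letting $\delta\to0$ closes the lower bound.

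The main obstacle, emphasized just before the statement of Theorem D, is the discontinuity of $\log J^u$ near the tangency $\zeta_0$, which prevents a direct application of the Bowen-ball / Brin-Katok formalism to the original system. I would overcome this by performing the Moran construction on the induced Markov system developed in \cite{SenTak1,SenTak2} that underlies Theorems B and C, exploiting the fact that $\mu_i$-generic orbits visit any prescribed neighborhood of $\zeta_0$ with small frequency (since $\int\log J^u\,d\mu_i<\infty$). On this induced system each concatenated block is a hyperbolic-time segment with uniformly bounded distortion, and the dimension computation reduces to an essentially symbolic one.
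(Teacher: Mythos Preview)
Your proposal is correct and follows essentially the same route as the paper: the paper also takes two ergodic measures with distinct unstable Lyapunov exponents and ratios $h/\lambda^u$ close to $t^u$, interleaves them in an oscillating sequence, and feeds this into (a slight modification of) the Moran-type construction of Lemma~\ref{lowd}, which is precisely the Cantor-set-plus-Frostman argument you sketch, carried out on the induced system of proper rectangles. The only cosmetic difference is that the paper obtains the two ergodic measures directly via Lemma~\ref{approximate} rather than via Theorem~C and ergodic decomposition, and it invokes Lemma~\ref{lowd} as a black box instead of redescribing the construction.
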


To handle the two-dimensional dynamics of $f$ without uniform hyperbolicity, 
a basic idea is to use a (locally defined) stable foliation to identify points on the same leaf
(called {\it long stable leaves} in our terms, see Sect.\ref{stable}), and to recover the one-dimensional argument
 \cite{Chu10} as much as possible.
 Since the stable foliation is not globally defined,
 it is not possible to tell whether such a leaf through a given point exist.
To bypass this difficulty we proceed in three steps:
 
 \begin{itemize}
\item introduce critical points (Sect.\ref{critical}) in the spirit of Benedicks and Carleson \cite{BenCar91};
 
\item formulate a condition in terms of the speed of recurrence to the critical set, which is sufficient 
for the existence of the long stable leaf (Sect.\ref{controlled} and Sect.\ref{stable}):
 
\item show that the unstable Lyapunov exponent does not exist 
 at any point for which this condition fails (Sect.\ref{negligible}).
 \end{itemize}
 
 The rest of this paper consists of two sections.
In Sect.2 we collect mainly from \cite{SenTak1,SenTak2} and prove some results which will be
needed later.
In Sect.3 we bring them together and prove the theorems.


\section{Preliminaries}
In this section we collect from \cite{SenTak1,SenTak2} and prove some results which will be used in the proofs of the theorems.

\subsection{Constants}
Throughout this paper we shall be concerned with positive constants $\lambda$, $\delta$, $b$, the purposes of which are as follows:

\begin{itemize}

\item $\lambda$ is used to evaluate the rate of expansion of derivatives away 
from the point $\zeta_0$ of tangency (See Lemma \ref{hyperbolic});

\item $\delta$ determines the size of a neighborhood of $\zeta_0$ (See Sect.\ref{hipe});

\item  $b$ determines the magnitude of the reminder term $b\cdot\Phi$ in \eqref{henon}.
\end{itemize}

The $\lambda$ is a fixed constant in $(0,\log2)$. 
The $\delta$ and $b$ are small constants chosen in this order.
 The letter $C$ is used to denote any positive constant which is independent of 
 $\delta$ or $b$.

\subsection{The non wandering set}\label{family}

By a {\it rectangle} we mean any
compact domain bordered by two compact curves in $W^u$ and two in the
stable manifolds of $P$ or $Q$. By an {\it unstable side} of a
rectangle we mean any of the two boundary curves in $W^u$. A {\it
stable side} is defined similarly.

By the results of \cite{SenTak1} there exists a rectangle $R$ contained in the set 
$\{(x,y)\in \mathbb R^2\colon |x|<2, |y|< \sqrt{b}\}$ with the following properties (See FIGURE 1):

\begin{itemize}
\item $\displaystyle{\Omega=\{x\in R\colon f^nx\in R\ \text{ for every }n\in\mathbb Z\}}$;

\item one of the unstable sides of $R$ contains $\zeta_0$;

\item 
one of the stable sides of $R$ contains $f\zeta_0$. This side is denoted by $\alpha_0^+$. The other side, denoted by $\alpha_0^-$,  contains $Q$;

\item $f\alpha_0^+\subset\alpha_0^-$.
\end{itemize}


\subsection{Dynamics outside of critical region}\label{hipe}
Set $$I(\delta)=\{(x,y)\in R\colon |x|<\delta\}.$$
Observe that $\zeta_0\in I(\delta)$.
The next two lemmas state that the dynamics outside of $I(\delta)$ is ``uniformly 
hyperbolic" and no critical behavior occurs.
A slope $s(v)$ of a nonzero tangent vector $v=\left(\begin{smallmatrix}\xi\\\eta\end{smallmatrix}\right)$ at a point in $\mathbb R^2$ is defined by 
$s(v)=|\eta|/|\xi|$ if $\xi\neq0$, and
$s(v)=\infty$ if $\xi=0$.

\begin{lemma}\label{hyperbolic}
For any $\lambda\in(0,\log2)$ and $\delta\in(0,1)$ there exists $b>0$ such that the following holds for $f=f_{a^*(b)}$:
 If $n\geq1$ and $x\in R$ are such that $x,fx,\ldots,f^{n-1}x\notin I(\delta)$, then 
for any nonzero tangent vector $v$ at $x$ with $s(v)\leq\sqrt{b}$,
\begin{itemize}

\item[(a)]  $\|D_xf^nv\|\geq \delta e^{\lambda n}.$
 If, in addition $f^nx\in I(\delta)$, then $\|D_xf^nv\|\geq e^{\lambda n}$;

\item[(b)] 
$s(D_xf^nv)\leq\sqrt{b}$.

\end{itemize}
\end{lemma}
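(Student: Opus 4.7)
The plan is to reduce to the one-dimensional quadratic map $g(x)=1-2x^{2}$ (toward which $f_{a^{*}}$ converges as $b\to 0$), then exploit the classical sine--tent conjugacy for a sharp pointwise expansion estimate on orbit segments of $g$ avoiding $(-\delta,\delta)$. Since $a^{*}(b)\to 2$, we have
$$Df_{(x,y)}=\begin{pmatrix}-2a^{*}x & 0\\ 0 & 0\end{pmatrix}+b\,A(x,y),$$
with $A$ bounded $C^{1}$. For $v=(\xi,\eta)$ with $|\eta|\leq\sqrt{b}\,|\xi|$ and a base point with $|x|\geq\delta$, a direct computation shows that $Dfv$ has first coordinate of magnitude $\geq(4|x|-Cb)|\xi|$ and second coordinate of magnitude $\leq Cb|\xi|$, so its slope is at most $Cb/(4\delta-Cb)<\sqrt{b}$ once $b$ is small relative to $\delta^{2}$. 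Induction along the orbit $(x_{0},\dots,x_{n-1})\subset R\setminus I(\delta)$ then yields (b).

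With the cone preserved, writing $(\xi_{i},\eta_{i})=D_{x}f^{i}v$ we have $\|D_{x}f^{i}v\|\asymp|\xi_{i}|$, and the recursion $|\xi_{i+1}|\geq(4|x_{i}|-Cb)|\xi_{i}|$ gives
$$\frac{\|D_{x}f^{n}v\|}{\|v\|}\;\geq\;e^{-Cbn/\delta}\prod_{i=0}^{n-1}4|x_{i}|,$$
where the right-hand product coincides, up to a multiplicative $(1+O(b))^{n}$ absorbed by the perturbation factor, with $|(g^{n})'(x_{0})|$ for the unperturbed 1D map $g(x)=1-2x^{2}$. The classical conjugacy $g\circ h=h\circ T$ with $h(y)=\sin(\pi y/2)$ and tent map $T$ yields the explicit identity
$$|(g^{n})'(x_{0})|\;=\;2^{n}\,\frac{\sqrt{1-x_{n}^{2}}}{\sqrt{1-x_{0}^{2}}}.$$
If $f^{n}x\in I(\delta)$, then $\sqrt{1-x_{n}^{2}}\geq\sqrt{1-\delta^{2}}$ while $\sqrt{1-x_{0}^{2}}\leq 1$, so $|(g^{n})'(x_{0})|\geq 2^{n}\sqrt{1-\delta^{2}}\geq e^{\lambda n}$ for any fixed $\lambda<\log 2$ and $\delta$ small, which is the sharp bound of (a).

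For the weaker bound $\delta e^{\lambda n}$ in (a) with $f^{n}x$ unrestricted, I split on the position of $x_{n}$: the hypothesis $|x_{n-1}|\geq\delta$ already forces $x_{n}\leq 1-2\delta^{2}$, and if moreover $x_{n}\geq -1+2\delta$ then $(1-x_{n})(1+x_{n})\gtrsim\delta^{2}$, so $\sqrt{1-x_{n}^{2}}\gtrsim\delta$ and $|(g^{n})'(x_{0})|\geq c\,2^{n}\delta\geq\delta e^{\lambda n}$; otherwise $x_{n}\in[-1,-1+2\delta)$, and a backward tent-map analysis shows that only one of the two preimages of each $x_{i}$ is compatible with the avoidance condition, forcing the entire orbit into a thin cusp around the fixed point $Q=-1$, where $\eta_{i}:=1+x_{i}$ satisfies $\eta_{i+1}\approx 4\eta_{i}$ and the ratio $\sqrt{1-x_{n}^{2}}/\sqrt{1-x_{0}^{2}}\approx 2^{n}$ yields $|(g^{n})'(x_{0})|\asymp 4^{n}\geq\delta e^{\lambda n}$. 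Taking $b$ so small that $Cb/\delta<\log 2-\lambda$ absorbs the perturbative factor $e^{-Cbn/\delta}$. \emph{The main obstacle} is the backward one-dimensional analysis in this second sub-case: establishing that the avoidance condition $|x_{i}|\geq\delta$ for $i<n$ combined with $x_{n}$ very close to $-1$ rigidly traps the entire orbit near $Q$, so that the apparent failure of the lower bound (from $\sqrt{1-x_{n}^{2}}$ being tiny) is compensated by the true $4^{n}$ expansion at the repelling fixed point.
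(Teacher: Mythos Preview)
The paper's own proof is a single sentence (``from the fact that $f$ may be viewed as a small perturbation of the map $x\mapsto 1-2x^2$''), essentially a pointer to the well-known one-dimensional estimate and its stability under perturbation. Your cone-preservation argument for (b) and the reduction $\|D_xf^nv\|/\|v\|\geq e^{-Cbn/\delta}\prod_{i=0}^{n-1}4|x_i|$ are correct and standard.

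The gap is in the step where you equate $\prod_{i=0}^{n-1}4|x_i|$ (the $x_i$ being first coordinates of the \emph{$f$-orbit}) with $2^n\sqrt{1-x_n^2}/\sqrt{1-x_0^2}$ up to a factor $(1+O(b))^n$. The conjugacy identity telescopes only along an exact $g$-orbit; along the $f$-orbit one has $g(x_i)=x_{i+1}+O(b+|a^*-2|)$, and the resulting error at step $i$ is a factor $\sqrt{1+O(b)/(1-x_{i+1}^2)}$, which is \emph{not} $1+O(b)$ when the orbit passes close to the fixed point $Q$ (there $1-x_{i+1}^2$ is of order $1+x_{i+1}$, possibly tiny). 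Your backward analysis concerns only the terminal point $x_n$ near $-1$, not intermediate passages, and is in any case carried out for the unperturbed map rather than the perturbed sequence. A secondary issue is that the $f$-orbit's first coordinates need not lie in $[-1,1]$, so the square roots in the conjugacy formula can fail to be real. The standard route (implicit in the paper's one-liner, and explicit in Benedicks--Carleson or Mora--Viana) avoids the global conjugacy and argues directly on the perturbed recursion $x_{i+1}=1-a^*x_i^2+O(b)$: one shows that each visit to the strip $\delta\leq|x_i|\leq 2\delta$ forces a subsequent sojourn of length of order $\log(1/\delta)$ near $Q$, where $2a^*|x_i|$ is close to $4$, so the loss is recovered; that combinatorial argument is robust under $O(b)$ perturbations.
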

\begin{proof}
From  the fact that $f$ may be viewed as a small perturbation of
the map $x\mapsto 1-2x^2$. \end{proof}

\begin{lemma}\label{curvature}{\rm (\cite[Lemma 2.3]{Tak11})}
Let $\gamma$ be a $C^2$ curve in $R$ and $x\in\gamma$. For each $i\geq0$ let $\kappa_i(x)$ denote the curvature of $f^i\gamma$ at $f^ix$.
Then
$$\kappa_i(x)\leq\frac{(Cb)^i}{\|D_xf^i|T_x\gamma\|^3}\kappa_0(x)+\sum_{j=1}^i\frac{(Cb)^j}{\|D_{f^{i-j}x}f^j|T_{f^{i-j}x}f^{i-j}\gamma\|^3}.$$
\end{lemma}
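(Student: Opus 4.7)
The plan is induction on $i$, with all the substance concentrated in the single-iterate estimate
\[
\kappa_1(x)\leq\frac{Cb}{\|D_xf|T_x\gamma\|^3}\bigl(\kappa_0(x)+1\bigr).
\]
Granting this base case, applying it at the point $f^{i-1}x$ to the curve $f^{i-1}\gamma$ yields
\[
\kappa_i(x)\leq\frac{Cb}{\|D_{f^{i-1}x}f|T_{f^{i-1}x}f^{i-1}\gamma\|^3}\bigl(\kappa_{i-1}(x)+1\bigr),
\]
and one substitutes the inductive hypothesis for $\kappa_{i-1}(x)$. The chain rule for norms along tangents, namely $\|D_{f^{i-1}x}f|T_{f^{i-1}x}f^{i-1}\gamma\|\cdot\|D_{f^{i-1-j}x}f^{j}|T_{f^{i-1-j}x}f^{i-1-j}\gamma\|=\|D_{f^{i-1-j}x}f^{j+1}|T_{f^{i-1-j}x}f^{i-1-j}\gamma\|$, fuses pairs of denominators; re-indexing $j\mapsto j+1$ in the resulting sum collapses the $\kappa_0(x)$-coefficient to $(Cb)^i/\|D_xf^i|T_x\gamma\|^3$. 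The ``$+1$'' coming out of the base case contributes the missing $j=1$ summand, and one arrives at the stated bound.

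To establish the base case I would invoke the standard curvature transformation formula. Parametrizing $\tilde\gamma:=f\circ\gamma$ through $\gamma$, one has $\tilde\gamma'=Df(\gamma')$ and $\tilde\gamma''=D^2f(\gamma',\gamma')+Df(\gamma'')$, so in two dimensions
\[
\tilde\gamma'\wedge\tilde\gamma''=Df(\gamma')\wedge D^2f(\gamma',\gamma')+\det(Df)\,(\gamma'\wedge\gamma''),
\]
whence
\[
\kappa_1(x)\leq\frac{|\det Df|}{\|D_xf|T_x\gamma\|^3}\kappa_0(x)+\frac{\|Df(\gamma')\wedge D^2f(\gamma',\gamma')\|}{\|Df(\gamma')\|^3}.
\]
Strong dissipation $|\det Df|\leq Cb$ is immediate from \eqref{henon}. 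For the remaining numerator, since $f_1=1-ax^2+b\Phi_1$ and $f_2=b\Phi_2$, the only entry of $D^2f$ unaccompanied by a factor of $b$ is $\partial_x^2f_1=-2a$; hence $D^2f(\gamma',\gamma')=\bigl(-2a(\gamma'_1)^2,0\bigr)+O(b)\|\gamma'\|^2$, while $Df(\gamma')=(-2ax\gamma'_1,0)+O(b)\|\gamma'\|$. Both leading vectors point in the $e_1$-direction, so their wedge vanishes, leaving only cross terms of size $O(b)\|\gamma'\|^3$ (using $|x|\leq 2$ on $R$). Dividing by $\|Df(\gamma')\|^3=\|D_xf|T_x\gamma\|^3\|\gamma'\|^3$ finishes the base case.

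The main obstacle is exactly this wedge cancellation: one must notice that the single $O(1)$ entry of $D^2f$ is aligned with the dominant direction of $Df(\gamma')$, so their $O(1)$ contributions annihilate in the cross product and only $O(b)$ residuals survive. This is what turns a naive $O(1)/\|D_xf|T_x\gamma\|^3$ bound into the actual $O(b)/\|D_xf|T_x\gamma\|^3$, and it is the reason the lemma is useful: each iterate picks up a small factor $Cb\ll 1$. Once this observation is in hand, the rest --- the explicit form of the base case, its iteration, and the bookkeeping of indices --- is essentially mechanical.
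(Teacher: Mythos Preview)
The paper does not supply its own proof of this lemma; it is quoted verbatim from \cite[Lemma 2.3]{Tak11} and used as a black box. Your argument is correct and is precisely the standard proof one finds in that reference: the one-step estimate comes from the curvature transformation formula together with $|\det Df|\leq Cb$ and the observation that the $b$-free parts of $Df(\gamma')$ and $D^2f(\gamma',\gamma')$ are both parallel to $e_1$, forcing their wedge to be $O(b)\|\gamma'\|^3$; the iterated estimate then follows by induction and the chain rule exactly as you describe.
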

By a \emph{$C^2(b)$-curve} we mean a compact, nearly horizontal $C^2$ curve in $R$ such that the slopes of its tangent directions are $\leq\sqrt{b}$ and the
curvature is everywhere $\leq\sqrt{b}$. 
\begin{lemma}\label{sae} If $\gamma$ is a $C^2(b)$-curve in $R$ not intersecting $I(\delta)$, then $f\gamma$ is a $C^2(b)$-curve. 
\end{lemma}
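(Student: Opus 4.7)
The plan is to verify the three defining conditions of a $C^2(b)$-curve for $f\gamma$ pointwise, using Lemmas \ref{hyperbolic} and \ref{curvature} as the main tools, with the order of constants $\lambda, \delta, b$ declared in Section 2.1 absorbing all dependencies.

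For the slope bound, I would take any $x \in \gamma$ and a nonzero tangent vector $v$ to $\gamma$ at $x$; since $\gamma$ is a $C^2(b)$-curve, $s(v) \leq \sqrt{b}$. Because $\gamma \cap I(\delta) = \emptyset$, Lemma \ref{hyperbolic}(b) with $n=1$ applies directly and gives $s(D_xf\,v) \leq \sqrt{b}$, which is exactly the slope of $f\gamma$ at $fx$. This takes care of the near-horizontality condition.

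For the curvature bound, I would apply Lemma \ref{curvature} with $i = 1$ to obtain
$$\kappa_1(x) \leq \frac{Cb}{\|D_xf|T_x\gamma\|^3}\,\kappa_0(x) + \frac{Cb}{\|D_xf|T_x\gamma\|^3}.$$
Since $x \notin I(\delta)$ and $s(v) \leq \sqrt{b}$, Lemma \ref{hyperbolic}(a) with $n = 1$ yields $\|D_xf|T_x\gamma\| \geq \delta e^{\lambda}$, so each denominator is bounded below by $\delta^{3}e^{3\lambda}$. Combining with $\kappa_0(x) \leq \sqrt{b}$ gives
$$\kappa_1(x) \leq \frac{Cb(1+\sqrt{b})}{\delta^3 e^{3\lambda}} \leq \frac{C'b}{\delta^3}.$$
Since $\delta$ is fixed before $b$, choosing $b$ so small that $C'b/\delta^3 \leq \sqrt{b}$ (that is, $b \leq (\delta^3/C')^2$) delivers the required curvature bound $\kappa_1(x) \leq \sqrt{b}$.

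The only nonroutine point is confirming $f\gamma \subset R$, which the definition of a $C^2(b)$-curve requires. I would appeal to the construction of the rectangle $R$ from \cite{SenTak1} and the fact that $f$ is a small perturbation of $(x,y)\mapsto(1-2x^2,0)$: a nearly horizontal curve sitting in $R \setminus I(\delta)$ has $x$-extent away from $\pm 1$, so its image lies in the horizontal extent of $R$, and the slope bound just established controls the vertical extent. The main obstacle is therefore not conceptual but bookkeeping — one must verify that all emergent constants are independent of $\delta$ and $b$, so that the two smallness conditions ($s(D_xfv)\leq\sqrt b$ and $\kappa_1(x)\leq\sqrt b$) simultaneously close. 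Both requirements are of the form ``$Cb/\delta^{k} \leq \sqrt{b}$,'' which is precisely the kind of estimate accommodated by the convention that $b$ is chosen last and as small as needed relative to $\delta$.
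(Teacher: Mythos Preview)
Your proposal is correct and follows exactly the approach the paper intends: the paper's proof is the single line ``From Lemma \ref{hyperbolic} and Lemma \ref{curvature},'' and you have simply unpacked those two citations, using Lemma \ref{hyperbolic}(b) for the slope and Lemma \ref{curvature} with $i=1$ together with the expansion bound from Lemma \ref{hyperbolic}(a) for the curvature, then invoking the order of choice $\delta \gg b$ to close the estimate. Your treatment of the containment $f\gamma\subset R$ is more than the paper provides and is adequate at this level of detail.
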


\begin{proof}
From Lemma \ref{hyperbolic} and Lemma \ref{curvature}.
\end{proof}

\subsection{Critical points}\label{critical}
Returns to the inside of $I(\delta)$ are inevitable and must be treated with care.
A key ingredient is the notion of critical points, i.e.,
points of tangencies between $C^2(b)$-curves in $W^u$ and preimages of leaves
of a stable foliation. 
We quote results from \cite{SenTak1} surrounding critical points, and develop them 
slightly further.

From
the hyperbolicity of the saddle $Q$,
there exist two mutually disjoint connected open sets $U^-$, $U^+$ independent of $b$ such that
$\alpha_0^-\subset U^-$, $\alpha_0^+\subset U^+$, $U^+\cap fU^+=\emptyset=U^+\cap fU^-$ and 
a foliation $\mathcal F^s$ of $U=U^-\cup U^+$ by one-dimensional leaves such
that: 
\begin{itemize}
\item $\mathcal F^s(Q)$, the leaf of $\mathcal F^s$ containing $Q$,
contains $\alpha_0^-$; 
\item if $x,fx\in U$, then $f(\mathcal F^s(x))
\subset\mathcal F^s(fx)$;

\item Let $e^s(x)$ denote the unit vector in $T_x\mathcal F^s(x)$ whose second component is positive. 
Then $x\mapsto e^s(x)$ is $C^{1}$, $\|D_xfe^s(x)\|\leq Cb$ and $\|D_xe^s(x)\|\leq C$;

\item If $x,fx\in U$, then $s(e^s(x))\geq
C/\sqrt{b}.$
\end{itemize}
\begin{definition}
{\rm We say $\zeta\in W^u\cap I(\delta)$ is a {\it critical point} if $f\zeta\in U^+$ and
$T_{f\zeta}W^u=T_{f\zeta}\mathcal F^s(f\zeta)$.}
\end{definition}

From the first two conditions on $\mathcal F^s$ and $f\alpha_0^+\subset\alpha_0^-$, there is a leaf of $\mathcal F^s$ which 
contains $\alpha_0^+$. Since $f\zeta_0\in\alpha_0^+$ we have 
$f\zeta_0\in U^+$ and
$T_{f\zeta_0}W^u=T_{f\zeta_0}\mathcal F^s(f\zeta_0)$,
namely, $\zeta_0$ is a critical point.
The next lemma tells about the location of
all other critical points.
Let $S$ denote
the compact lenticular domain bounded by the parabola
$f^{-1}\alpha_0^+\cap R$ and the unstable side of $R$ not containing $\zeta_0$.
\begin{lemma}\label{cexist}
Let $\gamma$ be a $C^2(b)$-curve in $I(\delta)$ stretching across $I(\delta)$.
Then there exists a unique critical point $\zeta\in\gamma$. In addition, $\zeta\in S$.
if $\zeta\neq\zeta_0$ then $\zeta\in {\rm int}S$.
\end{lemma}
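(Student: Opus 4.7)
The plan is to find the critical point on $\gamma$ as the unique zero of a monotone ``tangency indicator.'' Since $\gamma$ is a $C^2(b)$-curve stretching across $I(\delta)$, parameterize it by its first coordinate as $\gamma(t) = (t, \psi(t))$ for $t \in I_\gamma \supset [-\delta, \delta]$ with $|\psi'|, |\psi''| \leq \sqrt{b}$. Because $f\zeta_0 \in \alpha_0^+ \subset U^+$ with $U^+$ open and independent of $b$, there is a fixed $\epsilon_0 > 0$ such that $f\gamma(t) \in U^+$ for $|t| \leq \epsilon_0$; for $|t| \in [\epsilon_0, \delta]$ the image $f\gamma(t) = (1 - at^2 + O(b), O(b))$ lies away from $\alpha_0^+$ and hence outside $U^+$, so any critical point on $\gamma$ must have parameter in $(-\epsilon_0, \epsilon_0)$.

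On this interval, write $e^s(f\gamma(t)) = (\xi(t), \eta(t))$ with $|\xi(t)| \leq (\sqrt{b}/C)|\eta(t)|$ (so $|\eta(t)| \geq 1/2$ for $b$ small), and define
$$\tau(t) = \det\bigl(D_{\gamma(t)}f \cdot v(t),\ e^s(f\gamma(t))\bigr), \qquad v(t) = (1, \psi'(t))^T,$$
whose zeros are precisely the critical parameters on $\gamma$. Using $f(x,y) = (1 - ax^2, 0) + b\Phi$ together with $\|D e^s\| \leq C$, a direct expansion yields
$$\tau(t) = -2a\, t\, \eta(t) + O(b), \qquad \tau'(t) = -2a\, \eta(t) + O(|t|) + O(b).$$
For $\epsilon_0$ and $b$ small, $\tau'$ is bounded below in modulus by a positive constant, so $\tau$ is strictly monotone on $[-\epsilon_0, \epsilon_0]$; since $\tau(\pm\epsilon_0)$ have opposite signs, the intermediate value theorem produces a unique zero $t_0 \in (-\epsilon_0, \epsilon_0)$, and combining $\tau(0) = O(b)$ with the lower bound on $|\tau'|$ gives $|t_0| = O(b)$. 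This provides the unique critical point $\zeta = \gamma(t_0)$ on $\gamma$.

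To locate $\zeta$ in $S$, first note that since $\zeta_0$ is itself a critical point, $T_{\zeta_0}(f^{-1}\alpha_0^+) = (D_{\zeta_0}f)^{-1} T_{f\zeta_0}\mathcal{F}^s(f\zeta_0) = T_{\zeta_0}W^u$, so the parabola $f^{-1}\alpha_0^+ \cap R$ is tangent to $W^u$ at $\zeta_0$ and forms a cap with apex $\zeta_0$ bounding $S$ from above. Because $|t_0| = O(b)$, the critical point $\zeta$ sits near $\zeta_0$; a second-order comparison between $\gamma$ (whose curvature is at most $\sqrt{b}$) and the parabola (whose relevant bending transverse to $\gamma$ is of order $1/\sqrt{b}$) then fixes the relative position and forces $\zeta$ onto the lenticular side of $f^{-1}\alpha_0^+$, i.e.\ into $S$. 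Moreover, $\zeta \in \partial S$ would require $f\zeta \in \alpha_0^+$ and hence $\gamma$ tangent to $f^{-1}\alpha_0^+$ at $\zeta$; combined with the tangency of $W^u$ with $f^{-1}\alpha_0^+$ at $\zeta_0$, this can only occur when $\zeta = \zeta_0$. Hence $\zeta \in \mathrm{int}\,S$ whenever $\zeta \neq \zeta_0$. The main obstacle in this plan is precisely the final geometric step: existence, uniqueness and the $O(b)$ location of $\zeta$ follow cleanly from the monotonicity of $\tau$ and the intermediate value theorem, whereas placing $\zeta$ strictly inside $S$ requires a delicate second-order comparison that must exploit the specific Hénon-like form of $f$ beyond the leading quadratic term.
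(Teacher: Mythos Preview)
Your route to existence and uniqueness is correct but genuinely different from the paper's. You work in the source, defining a scalar tangency indicator $\tau(t)=\det(D_{\gamma(t)}f\,v(t),\,e^s(f\gamma(t)))$ and showing it is strictly monotone; the paper instead works in the image, arguing that $f\gamma$ is a parabola-like arc in $U^+$ and that each leaf of $\mathcal F^s$ to the right of the one through $\alpha_0^+$ either meets $f\gamma$ in exactly two points or is quadratically tangent to it, with uniqueness coming from the fact that two tangencies would force two distinct leaves of $\mathcal F^s$ to intersect. Both arguments are short; yours has the minor bonus of delivering the quantitative estimate $|t_0|=O(b)$.

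Where your proposal falls short is the location statement $\zeta\in S$ (and $\zeta\in\mathrm{int}\,S$ for $\zeta\neq\zeta_0$), which you yourself flag as the main obstacle. In the paper's image-side picture this step is nearly free: since every leaf meeting $f\gamma$ lies on or to the right of $\alpha_0^+$, the tangency point $f\zeta$ automatically lies on or to the right of $\alpha_0^+$, hence $\zeta\in S$; and $f\zeta\in\alpha_0^+$ forces $f\gamma$ tangent to the leaf containing $\alpha_0^+$, which (because $\gamma\subset W^u$ and there is a single orbit of tangency between $W^u$ and $W^s$) pins $\zeta=\zeta_0$. Your ``second-order comparison'' remains a sketch: the claim that $f^{-1}\alpha_0^+$ has transverse bending of order $1/\sqrt b$ is not correct (a direct computation for the leading H\'enon form gives curvature of order $2a=O(1)$ at the tip), and your final line silently uses $\gamma\subset W^u$ without saying so. I would either adopt the paper's image-side argument for the location claim or make the curvature comparison precise with the correct $O(1)$ versus $O(\sqrt b)$ scales.
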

\begin{proof}
We claim that any leaf of $\mathcal F^s$
at the right of the one containing $\alpha_0^+$
is tangent to $f\gamma$ and the tangency is quadratic, or else
it intersects $f\gamma$ exactly at two points.
This follows from \cite[Lemma 2.2]{Tak11}, the 
uniform boundedness of $\|D_xe^s(x)\|$ and $s(e^s(x))$.
Hence there exists a critical point on $\gamma$.
If $\zeta_1$, $\zeta_2$ are distinct critical points on $\gamma$, then
the leaves $\mathcal F^s(f\zeta_1)$, $\mathcal F^s(f\zeta_1)$ must intersect each
other, which is a contradiction. Hence the uniqueness holds.
Since the quadratic tangency occurs on or at the right of $\alpha_0^+$,
the last two statements hold.
\end{proof}

By Lemma \ref{cexist}, any critical point other than $\zeta_0$ is contained in the interior of $S$, so that
it is mapped to the outside of $R$, and then escape to infinity
under forward iteration.
Hence, the critical orbits are contained in a region where 
the uniform hyperbolicity is apparent. By binding generic orbits which fall inside $I(\delta)$
to suitable critical points, and then copying the exponential growth along the critical orbits,
one shows that the horizontal slopes and the expansion are restored
after suffering from the loss due to the folding behavior near $I(\delta)$.

In the next lemma we assume $\delta>0$ is sufficiently small.
Let $\zeta$ be a critical point and $x\in I(\delta)\setminus S$.
We say a unit tangent vector $v$ at $x$
is {\it in admissible position relative to} $\zeta$ if there exists a $C^2(b)$-curve 
which is tangent to both $T_\zeta W^u$ and $v$.
Set
\begin{equation}\label{cb}c(b)=-\frac{1}{\log b}.\end{equation}
Let us agree that for two positive real numbers $A$, $B$, $A\approx B$ indicates that both 
$A/B$, $B/A$ are bounded from above by a constant independent of $\delta$ or $b$.

\begin{lemma}\label{binding}
Let $\zeta$ a critical point, $x\in (\Omega\cap I(\delta))\setminus S$  and $v$ be a unit tangent vector at $x$
  in admissible position relative to $\zeta$.
there exist positive integers $p=p(\zeta,x),q=(\zeta,x)$ such that:
\begin{itemize}

\item[(a)] $q\leq -c(b)\log|\zeta-x|\ll -(2/3)\log |\zeta-x|\leq p$; 

\item[(b)] $f^i\zeta$, $f^ix\in U$ for every $1\leq i\leq p$;

\item[(c)] $s(D_xf^pv)\leq\sqrt{b}$ and
$\|D_xf^pv\|\geq e^{\frac{\lambda}{3}p}$;

\item[(d)] $\|D_xf^qv\|\leq C|\zeta-x|^{1-c(b)};$

\item[(e)] $\|D_xf^iv\|<1$ for every $1\leq i<q$ and
$\|D_xf^iv\|\approx 2|\zeta-x|\cdot\|D_{fx}f^{i-1}\left(\begin{smallmatrix}1\\0\end{smallmatrix}\right)\|$ for every $q\leq i\leq p$.

\end{itemize}
\end{lemma}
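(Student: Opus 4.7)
My plan is to adapt the Benedicks--Carleson binding construction. Starting from admissibility, I would fix a $C^2(b)$-curve $\gamma$ through $x$ tangent to $v$ at $x$ and to $T_\zeta W^u$ at $\zeta$. Because $\zeta$ is critical, $T_{f\zeta}W^u$ lies along $\mathcal F^s(f\zeta)$, whose slope is bounded below by $C/\sqrt{b}$; combining this with the explicit quadratic form of $f$ in the horizontal direction and the curvature bound from Lemma \ref{curvature}, I expect to obtain the fold estimates $|f\zeta - fx| \asymp |\zeta - x|^2$ and $D_xfv = 2|\zeta-x|\bigl(\begin{smallmatrix}1\\0\end{smallmatrix}\bigr) + O(|\zeta-x|^2) + O(b)$. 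These furnish the initial data for the whole argument and already settle the $i=1$ case of (e).

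Next I would define the binding period $p$ as the largest $n \geq 1$ with $|f^i\zeta - f^ix| \leq e^{-\alpha i}$ for every $1 \leq i \leq n$, where $\alpha \in (0,\lambda/3)$ is fixed small. By Lemma \ref{cexist}, any critical point distinct from $\zeta_0$ lies in $\mathrm{int}\,S$, so its first iterate exits $R$ through $U^+$ and the subsequent orbit stays in $U$ on its way to infinity; for $\zeta = \zeta_0$, the orbit $\{f^i\zeta_0\}$ is the homoclinic/heteroclinic arc limiting to $Q$ inside $U$. Either way $f^i\zeta \in U$ throughout $1 \leq i \leq p$ and the critical orbit remains outside $I(\delta)$, so Lemma \ref{hyperbolic} should give horizontal expansion $\|D_{f\zeta}f^{i-1}\bigl(\begin{smallmatrix}1\\0\end{smallmatrix}\bigr)\| \gtrsim e^{\lambda(i-1)}$ together with preservation of $\sqrt{b}$-horizontal slopes. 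Applying the mean value theorem with the Lipschitz control on $Df$ then transfers these bounds to the $f^ix$ orbit, yielding (b) and the copying relation $\|D_xf^iv\| \asymp 2|\zeta-x|\cdot\|D_{fx}f^{i-1}\bigl(\begin{smallmatrix}1\\0\end{smallmatrix}\bigr)\|$ of (e).

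I would then extract $q$ and $p$ by balancing. Taking $q$ as the first moment at which the small vertical remnant from the fold becomes dominated by the growing horizontal component stabilizes the copying in (e); since the Jacobian damps vertical vectors by a factor $O(b)$ per step while horizontal vectors grow by a factor $\asymp 2$, slope alignment should occur at $i \asymp c(b)\log(1/|\zeta-x|)$, giving $q \leq -c(b)\log|\zeta-x|$ and the norm bound (d). For $p$, the defining threshold $|f^p\zeta - f^px| \asymp e^{-\alpha p}$ combined with the exponential separation $|f^p\zeta - f^px| \gtrsim |\zeta-x|\cdot e^{\lambda p}$ will force $p \asymp (\lambda+\alpha)^{-1}\log(1/|\zeta-x|)$; choosing $\lambda$ sufficiently close to $\log 2$ then gives $p \geq -(2/3)\log|\zeta-x|$. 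At time $p$ the horizontal part $\sim |\zeta-x|\cdot e^{\lambda p}$ of $D_xf^pv$ dominates the $O(\sqrt b)$ vertical remnant, proving the slope bound in (c), and the norm estimate $e^{(\lambda/3)p}$ then follows from the lower bound on $p$.

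The hard part will be the bootstrap underlying the previous paragraph: propagating $|f^{i+1}\zeta - f^{i+1}x| \leq e^{-\alpha(i+1)}$ from step $i$ requires the very derivative estimate one is trying to prove, so shadowing and expansion have to be established in tandem by an inductive argument that balances the mean-value error against just-verified expansion. Keeping $f^i\gamma$ inside the $C^2(b)$ class via Lemma \ref{sae} and Lemma \ref{curvature} along the way, so that the fold picture from the first paragraph survives for each $i$, is the technical price one pays for working without a global invariant foliation.
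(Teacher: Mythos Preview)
Your sketch is the standard Benedicks--Carleson binding argument and, as an outline, it is sound. However, the paper does not carry out this argument here: items (a), (b), (c), (e) are simply quoted from \cite[Lemma~2.5]{SenTak1}, and only (d) is given a proof. So the relevant comparison is between your treatment of (d) and the paper's.

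For (d), the paper does not track a generic ``vertical remnant'' as you propose. Instead it splits $D_xfv = A\cdot\bigl(\begin{smallmatrix}1\\0\end{smallmatrix}\bigr) + B\cdot e^s(fx)$, where $e^s$ is the unit tangent to the stable foliation $\mathcal F^s$ constructed in Sect.~\ref{critical}. This choice is what makes the argument short: $e^s$ satisfies $\|Dfe^s\|\leq Cb$ exactly, so the $B$-component contracts at a clean rate $Cb$ per step without any slope-alignment analysis; the quadratic tangency gives $|A|\approx|\zeta-x|$; shadowing gives $\|D_{fx}f^{q-1}\bigl(\begin{smallmatrix}1\\0\end{smallmatrix}\bigr)\|\approx\|D_{f\zeta}f^{q-1}\bigl(\begin{smallmatrix}1\\0\end{smallmatrix}\bigr)\|$; and the upper bound $\|D_{f\zeta}f^{q-1}\bigl(\begin{smallmatrix}1\\0\end{smallmatrix}\bigr)\|\leq C|\zeta-x|^{-c(b)}$ is read off from the \emph{definition} of $q$ in \cite[Sect.~2.3]{SenTak1}, not derived from a balancing argument. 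Your route---defining $q$ as the moment the horizontal part overtakes the vertical remnant and then estimating---would also work, but it requires the inductive bootstrap you describe, whereas the paper sidesteps that entirely by invoking the pre-built foliation $\mathcal F^s$ and the imported definition of $q$.

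One minor inaccuracy: your justification of $p\geq -(2/3)\log|\zeta-x|$ via ``choosing $\lambda$ sufficiently close to $\log 2$'' is not needed and slightly misleading. The relevant expansion along the critical orbit is governed by the dynamics near $Q$, where the rate is close to $\log 4$, so the bound on $p$ holds with room regardless of the fixed value of $\lambda\in(0,\log 2)$.
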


\begin{proof}
We only give a proof of (d). The rest of the items is contained in \cite[Lemma 2.5]{SenTak1}.
Split $D_xfv=A\cdot\left(\begin{smallmatrix}1\\0\end{smallmatrix}\right)+B\cdot e^s(fx),$
 $A$, $B\in\mathbb R$.
Since the forward orbit of $f\zeta$ does not intersect $I(\delta)$, the tangent vector 
$\left(\begin{smallmatrix}1\\0\end{smallmatrix}\right)$ at $f\zeta$
grows exponentially in norm under forward iteration.
Since the forward orbit of $fx$ shadows that of $f\zeta$, 
$\|D_{fx}f^{q-1}\left(\begin{smallmatrix}1\\0\end{smallmatrix}\right)\|\approx\|D_{f\zeta}f^{q-1}\left(\begin{smallmatrix}1\\0\end{smallmatrix}\right)\|$
holds.
 From the quadratic behavior near the critical point we have $|A|\approx |\zeta-x|$. 
Then, $q\ll p$ in Lemma \ref{binding}(a) and the exponential contraction of $e^s(fx)$ implies
$|A|\cdot \|D_{fx}f^{q-1}\left(\begin{smallmatrix}1\\0\end{smallmatrix}\right)\|\gg |B|\cdot \|D_{fx}f^{q-1}e^s(fx)\|$.
Hence
$\|D_xf^{q}v\|\approx|\zeta-x|\cdot     \|D_{f\zeta} f^{q-1}\left(\begin{smallmatrix}1\\0\end{smallmatrix}\right)\|      \leq C|\zeta-x|^{1-c(b)},$
where the last inequality follows from 
the definition of $q$ in \cite[Sect.2.3]{SenTak1}. \end{proof}

\subsection{Existence of binding points}\label{stablegeo}
We look for suitable critical points for returns to $I(\delta)$ with the help of the nice geometry of $W^u$
which is particular to the first bifurcation parameter $a^*$.
Let $\alpha_1^+$ denote the connected component of $W^s(P)\cap R$ containing $P$,
and $\alpha_1^-$ the connected component of $f^{-1}\alpha_1^+\cap R$ not containing $P$. 
Let $\Theta$ denote the rectangle bordered by $\alpha_1^-$, $\alpha_1^+$ and the unstable sides of $R$.

\begin{lemma}\label{curvature2}
Let $\gamma$ be a $C^2(b)$-curve in $I(\delta)$ and suppose there exists a critical point on $\gamma$. 
If $n\geq1$ is such that $\Theta\cap f^i\gamma=\emptyset$ for $i=0,1,\ldots,n-1$ and $f^n\gamma\cap\Theta\neq\emptyset$, 
then any connected component of $\Theta\cap f^n\gamma$ is a $C^2(b)$-curve.
\end{lemma}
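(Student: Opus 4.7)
The plan is to show each connected component $\Gamma$ of $\Theta \cap f^n\gamma$ satisfies, at every point, the two $C^2(b)$ conditions: tangent slope $\leq \sqrt{b}$ and curvature $\leq \sqrt{b}$. Let $\zeta \in \gamma$ be the critical point furnished by Lemma \ref{cexist}, let $\gamma' \subset \gamma$ satisfy $f^n\gamma' = \Gamma$, pick $y \in \Gamma$ and set $x = f^{-n}|_{\gamma'}y$ and $v = T_x\gamma$. Since $\zeta \in \gamma$ and $\gamma$ is $C^2(b)$, the vector $v$ is automatically in admissible position relative to $\zeta$.

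For the slope bound, I would decompose the orbit segment $x, fx, \ldots, f^{n-1}x$ into alternating \emph{bound} and \emph{free} pieces. The initial bound piece uses $\zeta$ as binding critical point for $p_0 = p(\zeta, x)$ iterates, so that Lemma \ref{binding}(c) gives $s(D_x f^{p_0} v) \leq \sqrt{b}$. The subsequent iterates are either free (outside $I(\delta)$), in which case Lemma \ref{hyperbolic}(b) preserves the slope bound, or they re-enter $I(\delta)$; at each new entry, Lemma \ref{cexist} applied to the current (inductively $C^2(b)$) image of $\gamma'$ supplies a fresh binding critical point and a new bound segment begins. The hypothesis $f^i\gamma \cap \Theta = \emptyset$ for $i<n$ keeps the intermediate iterates in the region where this machinery applies. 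Running the decomposition to time $n$ yields $s(D_x f^n v) \leq \sqrt{b}$.

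For the curvature I would invoke Lemma \ref{curvature} with $i = n$. The leading term $(Cb)^n \kappa_0(x)/\|D_x f^n|T_x\gamma\|^3$ is harmless: $\kappa_0(x) \leq \sqrt{b}$ because $\gamma$ is $C^2(b)$, and the denominator admits a lower bound from the bind-free expansion estimates of Lemma \ref{binding}(e) and Lemma \ref{hyperbolic}(a). In the sum $\sum_{j=1}^n (Cb)^j/\|D_{f^{n-j}x} f^j|T_{f^{n-j}x} f^{n-j}\gamma\|^3$, each term is small because the geometric decay $(Cb)^j$ in the numerator dominates the worst-case algebraic loss that the denominator can suffer during an initial binding dip (bounded by Lemma \ref{binding}(d)), so the total is $\leq \sqrt{b}$.

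The main obstacle is the combinatorial bookkeeping behind the bind-free decomposition: each new binding demands a critical point on the current image of the sub-curve, and Lemma \ref{cexist} only supplies one when that image is itself $C^2(b)$. This creates a circular-looking recursion that I would unwind by induction on $n$ (equivalently, on the number of entries into $I(\delta)$ within $[0,n]$), using the statement being proved to close the $C^2(b)$-regularity requirement at the previous stage. The base case reduces to the first binding period, ending with nearly horizontal tangent and $\leq \sqrt{b}$ curvature by direct application of Lemmas \ref{binding}(c) and \ref{curvature}.
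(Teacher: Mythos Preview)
Your outline is sound, but you are missing the one geometric observation that collapses the argument: $I(\delta)\subset\Theta$. Consequently the hypothesis $\Theta\cap f^i\gamma=\emptyset$ for $1\le i\le n-1$ already forbids \emph{any} re-entry of the orbit into $I(\delta)$ before time $n$. There is thus exactly one binding event, at time $0$ with the given critical point $\zeta\in\gamma$, followed by a single free segment until the return to $\Theta$. The multi-cycle bound/free bookkeeping, the fresh critical points you manufacture at each re-entry via Lemma~\ref{cexist}, and the ``circular-looking recursion'' you propose to unwind by induction on $n$ never arise.

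With this observation the paper's proof is one line: by the mechanism underlying Lemma~\ref{sae} (namely Lemma~\ref{curvature}) it suffices to show $\|D_{f^ix}f^{n-i}|T_{f^ix}f^i\gamma\|\geq\delta$ for every $0\leq i\leq n-1$, and this follows directly from Lemma~\ref{hyperbolic} on the free tail and Lemma~\ref{binding}(e) on the single initial bound segment. Your curvature estimate via Lemma~\ref{curvature} is then essentially the same computation, but note that your citation of Lemma~\ref{binding}(d) for the denominator control is backwards: part (d) is an \emph{upper} bound on $\|D_xf^qv\|$; the lower bounds you need come from part (e).
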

\begin{proof}
By Lemma \ref{sae} it suffices to show that for any $x\in\gamma$, 
$\|D_{f^ix}f^{n-i}|T_{f^ix}f^i\gamma\|\geq\delta$ for every $0\leq i\leq n-1$. This follows from 
Lemma \ref{hyperbolic} and Lemma \ref{binding}(e).
\end{proof}

Let $\tilde\Gamma^u$ denote the collection of connected 
components of $\Theta\cap W^u$ with respect to the intrinsic topology on $W^u$.
\begin{lemma}
\label{nogu} 
Any element of $\tilde\Gamma^u$ 
is a $C^2(b)$-curve with endpoints in $\alpha_1^-$, $\alpha_1^+$.
\end{lemma}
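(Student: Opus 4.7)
My plan is to prove both assertions by tracing each $\gamma \in \tilde\Gamma^u$ backward through the dynamics until it has a $C^2(b)$-precursor inside $I(\delta)$ equipped with a critical point, so that the curvature lemmas from Section~2 apply. Since $W^u$ is the unstable manifold of a saddle, any arc of it inside $R$ is a forward iterate of a short local unstable curve at the saddle. Given $\gamma \in \tilde\Gamma^u$, I would pull it back under $f^{-1}$ one step at a time: so long as each preimage $f^{-i}\gamma$ stays in $R \setminus I(\delta)$, Lemma \ref{sae} propagates the $C^2(b)$-property forward; the first moment the backward orbit descends into $I(\delta)$ yields a $C^2(b)$-arc $\gamma' \subset I(\delta)$ such that $\gamma$ is a component of $\Theta \cap f^n\gamma'$ with $f^i\gamma' \cap \Theta = \emptyset$ for $0 \leq i < n$. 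Combining the expansion estimate of Lemma \ref{hyperbolic}(a) with the binding recovery of Lemma \ref{binding}(c),(e) ensures $\gamma'$ stretches across $I(\delta)$; Lemma \ref{cexist} then supplies a critical point on $\gamma'$, and Lemma \ref{curvature2} delivers the $C^2(b)$-property of $\gamma$.

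Once the $C^2(b)$-property of every $\gamma \in \tilde\Gamma^u$ is known, the endpoint claim follows from a direction-and-disjointness argument. The unstable sides of $\Theta$ are $C^2(b)$-sub-arcs of the unstable sides of $R$, hence themselves lie in $W^u$, and are either elements of $\tilde\Gamma^u$ or contained in elements. In either case, disjointness of distinct components of $\Theta \cap W^u$ prevents a separate component $\gamma$ from terminating on an unstable side of $\Theta$, so both endpoints of $\gamma$ fall on the stable sides $\alpha_1^-$ and $\alpha_1^+$, which are arcs of $W^s(P)$ with tangent slopes at least $C/\sqrt{b}$. Because $\gamma$ is near-horizontal while $\alpha_1^\pm$ are near-vertical, $\gamma$ meets each of $\alpha_1^\pm$ transversally at a single point, so exactly one endpoint lies on each.

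The main obstacle is to verify that the $C^2(b)$-precursor $\gamma'$ at the last backward descent into $I(\delta)$ really stretches across $I(\delta)$, a hypothesis indispensable for Lemma \ref{cexist} to supply the critical point feeding Lemma \ref{curvature2}. Establishing this length bound means combining the expansion of Lemma \ref{hyperbolic}(a) along the stretch from $I(\delta)$ up to $\Theta$, the binding-recovery estimate of Lemma \ref{binding}(c), and the relative geometric scale of $\Theta$ versus $I(\delta)$ into a quantitative lower bound on the horizontal length of $\gamma'$. Everything else --- propagating $C^2(b)$ by Lemma \ref{sae} away from $I(\delta)$ and by Lemma \ref{curvature2} at returns, and the transversality-based endpoint argument --- is comparatively routine bookkeeping once this quantitative length bound is in place.
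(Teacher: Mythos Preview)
Your approach differs from the paper's, and it contains a genuine circularity. The paper argues by \emph{forward} induction: it starts from the unstable side $\gamma$ of $\Theta$ not containing $\zeta_0$ (which is $C^2(b)$ and contains a fundamental domain of $W^u$), and shows by induction on $n$ that every component of $\Theta\cap\bigcup_{i=0}^n f^i\gamma$ is a $C^2(b)$-curve with endpoints in $\alpha_1^\pm$. The inductive hypothesis guarantees that at each return the curve already spans $\Theta$, so its restriction to $I(\delta)$ automatically stretches across $I(\delta)$ and Lemma~\ref{cexist} furnishes the critical point needed for Lemma~\ref{curvature2}; the endpoint claim follows because the endpoints of $\gamma$ lie in $\alpha_1^\pm\subset W^s(P)$ and are mapped into the stable sides of $\Theta$.

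Your backward scheme breaks at the sentence ``the first moment the backward orbit descends into $I(\delta)$ yields a $C^2(b)$-arc $\gamma'\subset I(\delta)$.'' Lemma~\ref{sae} only propagates the $C^2(b)$-property \emph{forward}, so knowing that $f^{-i}\gamma$ avoids $I(\delta)$ for $0\le i<n$ tells you nothing about the regularity of $\gamma'=f^{-n}\gamma$. To certify that $\gamma'$ is $C^2(b)$ you must go further back to the local unstable curve and come forward---but on the way forward there may be \emph{several} earlier passages through $I(\delta)$, each of which requires exactly the argument you are trying to give. In other words, handling only the last descent is not enough; an induction over all successive returns is unavoidable, and that is precisely what the paper's forward induction supplies.

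Two further points. First, your proposed stretching argument invokes Lemma~\ref{binding}, whose hypothesis is $x\in\Omega$; points of $W^u$ need not lie in $\Omega$, so the binding estimates are not directly available along $\gamma'$. Second, even granting $\gamma'$ is $C^2(b)$, you have not shown that the arc of $f^{-n}\gamma$ actually lying inside $I(\delta)$ stretches across it; the paper's induction avoids this because the inductive hypothesis hands you a curve already spanning $\alpha_1^-$ to $\alpha_1^+$, whence its $I(\delta)$-piece spans $I(\delta)$ for free. Reorganising your argument as a forward induction from a fundamental domain in $\Theta$ (rather than from an arbitrary target $\gamma$) would close both gaps and essentially recover the paper's proof.
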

\begin{proof}
Let $\gamma$ denote the unstable side of $\Theta$ not containing $\zeta_0$.
This is $C^2(b)$, and contains a fundamental domain in $W^u$.
It suffices to show that for each $n\geq0$, 
any connected component of $\Theta\cap\bigcup_{i=0}^{n}f^i\gamma$ 
is a $C^2(b)$-curve with endpoints in $\alpha_1^-$, $\alpha_1^+$.
This holds for $n=0$. If it holds for $n=k$, then by Lemma \ref{curvature2},
any connected component of $\Theta\cap\bigcup_{i=0}^{k+1}f^i\gamma$ 
is $C^2(b)$. Since the endpoints of $\gamma$ are mapped to the stable sides of $\Theta$,
the statement holds for $n=k+1$.
\end{proof}

Define
$$\Gamma^u=\{\gamma^u\colon\text{$\gamma^u$ is
the pointwise limit of the sequence in $\tilde\Gamma^u$}\}.$$
Since elements of $\tilde\Gamma^u$ are $C^2(b)$ by Lemma \ref{nogu}, the pointwise
convergence is equivalent to the uniform convergence. Since curves in $\tilde\Gamma^u$ are pairwise disjoint,
the uniform convergence is equivalent to the $C^1$ convergence.
Hence, curves in $\Gamma^u$ are $C^1$ and the slopes of their
tangent directions are $\leq\sqrt{b}$.
Elements of $\Gamma^u$ are called {\it long unstable leaves}.
Set $$\mathcal W^u=\bigcup_{\gamma^u\in\Gamma^u}\gamma^u.$$

\begin{figure}
\begin{center}
\includegraphics[height=4cm,width=6cm]
{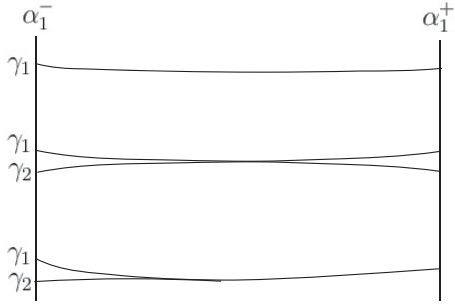}
\caption{The long unstable leaves.}
\end{center}
\end{figure}

Several remarks are in order on the long unstable leaves:

\begin{itemize}
\item each leaf is the
(strictly) monotone  limit of curves in $\tilde\Gamma^u$, so that
any connected component of $\mathcal W^u$ contains at most two leaves;

\item two intersecting leaves are tangent at every point of the intersection;

\item For $x\in\mathcal W^u$, $E_x^u=T_x\gamma^u$,
where $\gamma^u$ denotes any leaf containing $x$ (\cite[Lemma 3.2(P2)]{SenTak2});

\item $\Omega\cap \Theta\subset\mathcal W^u$ {\rm (\cite[Lemma 2.8]{SenTak1})}.

\end{itemize}

\begin{lemma}\label{eligible}
If $x\in \Omega\cap I(\delta)$, then there exists a critical point 
relative to which any unit vector spanning $E^u_x$ is in admissible position. 
\end{lemma}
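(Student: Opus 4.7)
The plan is to place $x$ on a long unstable leaf, extract a genuine critical point from an approximating sequence of curves in $\tilde\Gamma^u$, and then realize admissibility by a small $C^2$ perturbation of that approximating curve.

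First I would shrink $\delta$ so that $I(\delta)\subset \Theta$, which is possible since $\Theta$ is bounded by $\alpha_1^\pm$ lying uniformly away from the $y$-axis. Then the hypothesis gives $x\in\Omega\cap \Theta\subset\mathcal W^u$ by the quoted inclusion, so $x$ sits on some long unstable leaf $\gamma^u\in\Gamma^u$, and by the third remark on $\Gamma^u$ the unit vectors $\pm v$ spanning $E_x^u$ are precisely $\pm T_x\gamma^u$. Pick a sequence $\gamma_n\in\tilde\Gamma^u$ converging in $C^1$ to $\gamma^u$; by Lemma \ref{nogu} each $\gamma_n$ is $C^2(b)$ with endpoints on $\alpha_1^\pm$, hence for $n$ large it stretches across $I(\delta)$ and Lemma \ref{cexist} supplies a critical point $\zeta_n\in\gamma_n\cap S\subset W^u$.

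Now fix $n$ sufficiently large. It remains to produce a single $C^2(b)$-curve $\beta$ tangent to $T_{\zeta_n} W^u=T_{\zeta_n}\gamma_n$ at $\zeta_n$ and to $v$ at $x$. The natural candidate is $\gamma_n$ itself, altered only inside a small neighborhood of the point $y_n\in\gamma_n$ nearest to $x$, via a cubic Hermite arc through $x$ with tangent $v$. The $C^1$-convergence forces $y_n\to x$ and $T_{y_n}\gamma_n\to v$, so the perturbation can be made arbitrarily small in both displacement and angular defect; choosing its support to exclude $\zeta_n$ preserves the original tangency there. In the degenerate case where $\zeta_n\to x$ prevents this exclusion, I would instead abandon $\gamma_n$ and define $\beta$ directly as a short cubic Hermite arc between $\zeta_n$ and $x$ realizing the two prescribed tangents, whose small length and small angular discrepancy force small curvature.

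The only quantitative point is verifying that the curvature of $\beta$ stays below $\sqrt b$. This is where the slack provided by Lemma \ref{curvature} enters: applied to $\gamma_n$, the displayed sum there is dominated by $\sum (Cb)^j$, so the curvature of $\gamma_n$ is in fact $O(b)$, far below the $C^2(b)$ threshold $\sqrt b$; this slack easily absorbs the curvature cost of the Hermite perturbation. I expect the main conceptual hurdle to lie not in this routine estimate but in handling the case where the critical point drifts towards $x$, which is why the two-case split in the previous paragraph is important; working with the pre-limit object $\zeta_n$ rather than a putative limit critical point on $\gamma^u$ sidesteps any concern about $\gamma^u$ only being $C^1$.
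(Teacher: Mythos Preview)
Your approach matches the paper's: place $x$ on a long unstable leaf $\gamma^u\in\Gamma^u$ via $\Omega\cap\Theta\subset\mathcal W^u$, approximate it in $C^1$ by curves $\gamma_n\in\tilde\Gamma^u$, and take the critical point $\zeta_n\in\gamma_n$ furnished by Lemma~\ref{cexist}. The paper's three-line proof stops at exactly this skeleton; your Hermite-perturbation construction of the witnessing $C^2(b)$-curve (together with the $O(b)$ curvature slack, which does follow from the $\|D f^j\|\geq\delta$ lower bounds used in the proof of Lemma~\ref{curvature2}) is a reasonable way to flesh out what the paper leaves implicit.
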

\begin{proof}
A long stable leaf containing $x$ is 
accumulated in $C^1$ by curves in $\tilde\Gamma^u$, each of which
contains a critical point by Lemma \ref{cexist}.
\end{proof}

If $x\in \Omega\cap I(\delta)$, then critical points as in Lemma \ref{eligible} are not unique.
Let $\zeta(x)$ denote the one which is closest to the saddle in $W^u$ with respect to 
the induced metric on $W^u$, and call it a \emph{binding point} for $x$.
Write $p(x)=p(\zeta(x),x)$, $q(x)=q(\zeta(x),x)$
and call them the \emph{fold} and \emph{bound} periods of $x$.

\subsection{Bound-free structure}\label{bf}
To the forward orbit of $x\in\Omega$ we associate a sequence
$$0\leq n_1<n_1+p_1< n_2<n_2+p_2<n_3<\cdots$$ of integers which record the pattern of recurrence to $I(\delta)$ in the following manner.
First,
$n_1=\min\{n\geq0\colon f^nx\in I(\delta)\}$ and $p_1=p(f^{n_1}x)$. 
Given $n_k$ and $p_k$, set
$n_{k+1}=\min\{n\geq n_k+p_k\colon f^nx\in I(\delta)\}$ and $p_{k+1}=p(f^{n_{k+1}}x)$. 
This decomposes the forward orbit of $x$ into segments corresponding to time intervals
$(n_k,n_k+p_k)$ and $[n_k+p_k,n_{k+1}]$, during which we refer to the points in the orbit of $x$
as being ``bound" and ``free" respectively.
The $\{n_k\}_k$ are  the only return times to $I(\delta)$.


\subsection{Controlled points}\label{controlled}
For $x\in\Omega$ define 
$$d_{\rm crit}(x)=\begin{cases}|\zeta(x)-x|&\text{ if $x\in I(\delta)$};\\
1&\text{ otherwise,}\end{cases}$$
where $\zeta(x)$ is the binding point for $x$ determined in Sect.\ref{stablegeo}.

\begin{definition}
We say $x\in\Omega$ is {\it controlled} if $d_{\rm crit}(f^{n}x)> b^{\frac{n}{9}}$ holds for every $n\geq0$.
\end{definition}
The next lemma states that points without too deep returns to the criticality is controlled eventually.

\begin{lemma}\label{sr}
Let $m\geq0$.
If $d_{\rm crit}(f^{n}x)>  b^{\frac{n}{9}}$ for every $n\geq m$, then there exists $k\in[0,m]$ such that $f^kx$ is controlled.
\end{lemma}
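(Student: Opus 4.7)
The plan is to iteratively shift the base time past each deep critical return, using the associated bound periods as buffers. If $x$ itself is controlled, I take $k=0$ and we are done. Otherwise, let $n_0 \geq 0$ be the smallest integer with $d_{\rm crit}(f^{n_0}x) \leq b^{n_0/9}$; the hypothesis forces $n_0 < m$, and in particular $f^{n_0}x \in I(\delta)$ and the critical approach is so deep that Lemma~\ref{binding}(a) gives the bound-period estimate
$$p_0 := p(f^{n_0}x) \;\geq\; -\tfrac{2}{3}\log d_{\rm crit}(f^{n_0}x) \;\geq\; \tfrac{2n_0}{27}|\log b|.$$
By the bound-free decomposition of Sect.~\ref{bf}, the orbit of $x$ stays outside $I(\delta)$ during the interval $(n_0, n_0+p_0]$, so $d_{\rm crit}(f^ix) = 1$ there.

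Next I would iterate. Setting $k_0 = 0$, at stage $j$ I test whether $f^{k_j}x$ is controlled; if not, let $i_j \geq 0$ be the smallest index for which $d_{\rm crit}(f^{k_j+i_j}x) \leq b^{i_j/9}$, put $n_j = k_j + i_j$, $p_j = p(f^{n_j}x)$, and define $k_{j+1} := n_j + p_j + 1$. Lemma~\ref{binding}(a) again yields $p_j \geq \tfrac{2 i_j}{27}|\log b|$, so that $k_{j+1} - k_j = i_j + p_j + 1 \geq i_j(1 + \tfrac{2}{27}|\log b|) + 1$, which is strictly positive and (for $b$ small) much larger than $i_j$. During the bound period $(n_j, n_j+p_j]$ the shifted controlled condition is trivially satisfied because $d_{\rm crit} \equiv 1$, so each iteration only needs to re-examine the orbit past $n_j + p_j$.

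The key claim I would need to establish is that this iterative process terminates in finitely many steps with final index $k_j \in [0, m]$. The hypothesis is decisive here: because $d_{\rm crit}(f^n x) > b^{n/9}$ for every $n \geq m$, any violation $n_j \geq m$ at stage $j$ must satisfy $(n_j - k_j)/9 \leq -\log_{b}\! d_{\rm crit}(f^{n_j} x) < n_j/9$, i.e.\ $k_j > 0$; combining this with the lower bound on $p_j$ and choosing $b$ sufficiently small so that $(2/27)|\log b|$ is large, one checks that $i_j$ must decrease sufficiently from one stage to the next for the procedure to stop. The hypothesis also precludes absolute violations past $m$, so the only way the iteration could fail is through "relative" violations whose depth is bounded by $m/9$ in a way that can be absorbed.

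The main obstacle I expect is precisely this termination step: bookkeeping the interplay between the absolute depth bound $b^{n/9}$ (for $n \geq m$) and the shifted depth requirement $b^{(n-k_j)/9}$ of the controlled condition, and showing that the sum of bound-period lengths produced before $k_j$ exits $[0,m]$ is itself bounded by $m$. I would handle this by a careful case analysis according to whether $n_j + p_j \geq m$ (so the bound period already brings us past $m$ and one can take $k = n_j + 1 \leq m$, checking the controlled condition for $N > n_j + p_j$ via the large factor $|\log b|$ that makes $k_j$ negligible compared to $N$), or $n_j + p_j < m$ (in which case the recursion stays inside $[0,m]$ and one continues); verifying that these two cases exhaust the possibilities.
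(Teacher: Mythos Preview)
Your overall strategy---iteratively advancing the base point past deep returns using bound periods as buffers---is in the right spirit, but it diverges from the paper's argument and the termination step you flag as ``the main obstacle'' is a genuine gap that your sketch does not close.

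The paper argues by contradiction: assuming $f^kx$ is not controlled for every $k\in[0,m]$, it builds increments $k_1,k_2,\ldots$ where $k_{i+1}$ is the \emph{first} relative time at which the controlled condition fails for $f^{k_1+\cdots+k_i}x$. The key mechanism is not the bound period per se but the shadowing estimate $b^{k_i/9}\cdot\|Df^{2k_i}\|\ll1$, which forces $k_{i+1}>2k_i$. This geometric doubling makes the final increment $k_s$ dominate the whole sum $K_s=k_1+\cdots+k_s$ (indeed $K_s<2k_s$), and the paper then confronts the violation $d_{\rm crit}(f^{K_s}x)\le b^{k_s/9}$ with the hypothesis at absolute time $K_s\ge m$.

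Your scheme instead sets $k_{j+1}=n_j+p_j+1$, jumping past the entire bound period rather than to the next violation. This creates two problems. First, $k_{j+1}$ can overshoot $m$ badly even when $n_j<m$, since $p_j$ can be enormous; you then lose the right to invoke ``$f^{k_{j+1}}x$ is not controlled'' and the recursion stalls. Second, your proposed remedy---take $k=n_j+1\le m$ once $n_j+p_j\ge m$ and verify control directly---does not close. For relative times $N\ge p_j$ the hypothesis yields only $d_{\rm crit}(f^{n_j+1+N}x)>b^{(n_j+1+N)/9}$, which is weaker than the required $b^{N/9}$ by the factor $b^{(n_j+1)/9}$. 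Your assertion that ``$k_j$ is negligible compared to $N$'' via the factor $|\log b|$ is unjustified: $n_j$ can be close to $m$ (arbitrarily large), while $p_j$ depends only on the relative depth $i_j$, which may equal $1$.

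What you are missing is precisely the paper's device: iterate via \emph{violation times} themselves, so that the relative time $k_s$ at the final step is comparable to the absolute time $K_s$. That comparability is what lets one play the hypothesis (phrased at absolute time) against the failure of control (phrased at relative time).
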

\begin{proof}
The statement for $m=0$ is immediate from the definition.
Let $m=1$ and suppose that $f^kx$ is not controlled for every $k\in[0,m]$. 
Then, it is possible to define a sequence $\{k_i\}_{i=1}^s$ of nonnegative integers inductively as follows: 
$k_1=\min\{n\geq0\colon d_{\rm crit}(f^nx)\leq b^{\frac{n}{9}}\}.$
Since $x\in G_m$ we have $k_1<m$.
Given $k_1,\ldots,k_{i}$ with $k_1+\cdots+k_i<m$
and  $d_{\rm crit}(f^{k_1+\cdots+k_i}x)\leq  b^{\frac{k_i}{9}}$,
define $k_{i+1}=\min\{n>0\colon d_{\rm crit}(f^{k_1+\cdots+k_i+n}x)\leq b^{\frac{n}{9}}\}.$
We have $k_1+\cdots+k_{s-1}< m\leq k_1+\cdots+k_{s}$.
Since $b^{\frac{k_i}{9}}\cdot \|Df^{2k_i}\|\ll1$, $f^{k_1+\cdots+k_i}x$ shadows the forward orbit of the binding point 
at least up to time $2k_i$, and so
$2k_i<k_{i+1}$. This yields $k_1+\cdots+k_s<2k_s$, and thus
$d_{\rm crit}(f^{k_1+\cdots+k_s}x)\leq b^{\frac{k_s}{9}}<b^{\frac{2(k_1+\cdots+k_s)}{9}}.$
From the assumption on $x$ and $m\leq k_1+\cdots+k_{s}$ we have
$d_{\rm crit}(f^{k_1+\cdots+k_s}x) > b^{\frac{k_1+\cdots+k_s}{9}}.$
These two inequalities yield a contradiction.
\end{proof}

\subsection{Long stable leaves}\label{stable}
By a {\it vertical $C^2(b)$-curve} we mean a compact, nearly vertical $C^2$ curve in $R$
with endpoints
in the unstable sides of $R$, and of the form
$$\{(x(y),y)\colon |x'(y)|\leq C\sqrt{b}, |x''(y)|\leq
C\sqrt{b}\}.$$
A vertical $C^2(b)$-curve $\gamma^s$ is called a {\it long stable leaf} if 
for any $x,y\in\gamma^s$, $|f^nx-f^ny|\leq Cb^{\frac{n}{2}}$
holds for every $n\geq0$.

\begin{lemma}\label{leaf1}
If $x\in\Omega$ is controlled, then there exists a unique long stable leaf through $x$,
denoted by $\gamma^s(x)$. In addition, the following holds: 
\begin{itemize}
\item[(a)] 
for all $y$, $z\in\gamma^s(x)\cap\Omega$ and $n>0$,
$$\frac{ \|D_yf^n|E_y^u\|}{ \|D_zf^n|E_z^u\|}  \leq2;$$

\item[(b)] if $x,y\in\Omega$ are controlled, then the Hausdorff distance between $\gamma^s(x)$
and $\gamma^s(y)$ is $\leq e^{C\sqrt{b}}|x-y|$.
\end{itemize}
\end{lemma}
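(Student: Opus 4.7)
The plan is to construct $\gamma^s(x)$ as the $C^1$-limit of an inverse graph transform. For each large $N$, take any short vertical $C^2(b)$-segment $\sigma_N \subset R$ through $f^N x$ and pull it back to $\sigma_0^{(N)} := f^{-N} \sigma_N$. The main task is to verify that $\sigma_0^{(N)}$ is again vertical $C^2(b)$ and that the family $\{\sigma_0^{(N)}\}_N$ is $C^1$-Cauchy. On free iterates $f^n x \notin I(\delta)$, the action of $Df^{-1}$ on a vertical vector $v$ at $f^{n+1}x$ produces a vector with slope comparable to $|{-2a}(f^n x)_1 + O(b)|/O(b) \gg C/\sqrt{b}$, so verticality is preserved comfortably. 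At a return $f^n x \in I(\delta)$ verticality is threatened by the fold, but the controlled hypothesis gives $|(f^n x)_1| \geq d_{\rm crit}(f^n x) > b^{n/9}$, while the horizontal spread of two points on $\sigma_n^{(N)}$ is at most $Cb^{n/2}$; the per-return distortion of the vertical slope is $O(b^{n/2 - n/9}) = O(b^{7n/18})$, summable in $n$ to $O(\sqrt{b})$. The $C^2$ bound is handled in parallel via Lemma \ref{curvature}. Uniqueness of $\gamma^s(x)$ then follows from the forward estimate $|f^n y - f^n z| \leq C b^{n/2}$, which forces any two leaves through $x$ to have tangents agreeing to any order $b^{n/2}$, hence to coincide as $C^2(b)$-graphs.

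For (a), the plan is a standard bounded distortion estimate. Write
\begin{equation*}
\log\frac{\|D_y f^n | E^u_y\|}{\|D_z f^n | E^u_z\|} = \sum_{i=0}^{n-1}\bigl(\log J^u(f^i y) - \log J^u(f^i z)\bigr).
\end{equation*}
Since $y, z \in \gamma^s(x) \cap \Omega$, one has $|f^i y - f^i z| \leq C b^{i/2}$, and both points lie within $Cb^{i/2}$ of $f^i x$, which satisfies $d_{\rm crit}(f^i x) > b^{i/9}$. At a free iterate $\log J^u$ has bounded Lipschitz constant and the summand is $O(b^{i/2})$. Near a critical point Lemma \ref{binding}(e) gives $J^u(w) \approx |w - \zeta|$, so $\log J^u$ is Lipschitz on the $Cb^{i/2}$-neighborhood of $f^i x$ with constant of order $1/d_{\rm crit}(f^i x) < b^{-i/9}$, yielding a summand of order $b^{7i/18}$. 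Summing the geometric series gives a bound $\leq C b^{7/18} < \log 2$ for $b$ small, so the ratio is at most $2$.

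For (b), write $\gamma^s(x) = \{(\phi_x(t), t)\}$ and $\gamma^s(y) = \{(\phi_y(t), t)\}$ as vertical $C^2(b)$-graphs over a common vertical interval spanning $R$. The slope bounds $|\phi_x'|, |\phi_y'| \leq C\sqrt{b}$ together with $\phi_x(x_2) = x_1$ and $\phi_y(y_2) = y_1$ give
\begin{equation*}
|\phi_x(t) - \phi_y(t)| \leq |x_1 - y_1| + C\sqrt{b}\,|x_2 - y_2| \leq (1 + C\sqrt{b})|x - y|,
\end{equation*}
and a similar bound holds for the endpoints of the two leaves on the unstable sides of $R$. This is at most $e^{C\sqrt{b}}|x - y|$, giving the Hausdorff bound.

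The main obstacle is the construction step, specifically keeping the pullbacks $\sigma_0^{(N)}$ inside the vertical $C^2(b)$-class across infinitely many potentially deep returns to $I(\delta)$. The exponent $1/9$ in the \emph{controlled} condition is precisely calibrated to dominate the stable contraction exponent $1/2$ with enough margin to absorb the curvature distortion coming from Lemma \ref{curvature}; verifying that both the slope and the second derivative remain in the $C^2(b)$-class at every return is where all the quantitative work concentrates. The distortion estimate (a) and the Hölder dependence (b) are then routine consequences of the construction together with the vertical $C^2(b)$ geometry.
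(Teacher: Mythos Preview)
Your approach differs from the paper's. The paper does not construct the leaf directly; instead it invokes the machinery of \cite{MorVia93,BenVia01,SenTak2}, which reduces the existence, uniqueness, distortion bound (a), and Lipschitz dependence (b) all to a single \emph{unstable} expansion estimate $\|D_xf^n|E^u_x\|\geq b^{n/10}$ for every $n\geq1$. The paper then verifies this estimate via the bound/free decomposition: on free iterates one gets growth $\geq\delta e^{\lambda n/3}$, while on a bound segment the controlled condition bounds the bound period by $p\leq -(2n/27)\log b$, so the worst drop is $5^{-p}\geq b^{(2\log 5/27)n}$. Your direct graph-transform construction is the moral content of those cited references, and your sketch of the slope and curvature control at returns is on the right track; what you gain is self-containment, what you lose is brevity.

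There is, however, a genuine gap in your argument for (b). The inequality $|\phi_x(t)-\phi_y(t)|\leq |x_1-y_1|+C\sqrt{b}\,|x_2-y_2|$ does \emph{not} follow from the individual slope bounds $|\phi'_x|,|\phi'_y|\leq C\sqrt{b}$ and the interpolation conditions $\phi_x(x_2)=x_1$, $\phi_y(y_2)=y_1$. What those give is $|\phi_x(t)-\phi_y(t)|\leq |x_1-y_1|+C\sqrt{b}(|t-x_2|+|t-y_2|)$, and since $t$ ranges over the full vertical extent $O(\sqrt{b})$ of $R$, the second term is $O(b)$ \emph{independently of} $|x-y|$; for $|x-y|\ll b$ this swamps the desired bound $e^{C\sqrt{b}}|x-y|$. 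Two nearly-vertical $C^2(b)$-curves through nearby points can diverge if their slopes differ: you need to control $|\phi'_x-\phi'_y|$ in terms of $|x-y|$, i.e.\ Lipschitz regularity of the stable lamination, not just the slope bound on each leaf separately. This regularity does come out of the graph-transform construction (the transform is a contraction on the space of Lipschitz families of vertical graphs), but you must track the dependence on the base point through the pullbacks, which you have not done.
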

\begin{proof} 
In view of the results in \cite[Sect.6, Sect.7C]{MorVia93}, \cite[Lemma 2.4]{BenVia01}
 \cite[Sublemma A.2]{SenTak2}, it suffices to show the following expansion estimate:
\begin{equation}\label{appendix2}\|D_xf^n|E^u_x\|\geq b^{\frac{n}{10}}\text{ for every $n\geq1$.} 
\end{equation}

To show \eqref{appendix2} we introduce the bound/free 
structure on the orbit of $x$. If $f^{n}x$ is free, then 
the orbit $x,\ldots,f^nx$ is decomposed into alternate bound and free segments. Applying the expansion estimates in 
Lemma \ref{hyperbolic} and Lemma \ref{binding}
we have $\Vert D_xf^n|E^u_x\Vert\geq \delta e^{\frac{\lambda}{3}n}>
b^{\frac{n}{10}}$. If $f^{n}x$ is bound, then
 there exists an integer $0<m<n$ such that
$f^mx\in I(\delta)$ and $m<n<m+p$, where $p$ is the bound period of $f^mx$.
Since $f^{m+p}x$ is free and $\|Df\|<5$ we have
$\Vert D_xf^n|E^u_x\Vert\geq 5^{-(m+p-n)} \Vert
D_xf^{m+p}|E^u_x\Vert> 5^{-p}$. Since $x$ is controlled, $p\leq -(2n/27)\log b$ 
and so $\|D_xf^n|E^u_x\|\geq b^{\frac{2\log 5}{27}n}.$
\end{proof}








\subsection{Points with too deep returns are negligible}\label{negligible}

For each $m\geq0$ define 
$$G_m=
\{x\in\Omega\colon \text{$d_{\rm crit}(f^{n}x)> b^{\frac{n}{10}}$ for every $n\geq m$}\}.$$
Set
$$\Omega_*=\Omega\setminus\bigcup_{m=0}^\infty G_m.$$
This is the set of points which return to the deep inside of the criticality.
It is true that we lose control of derivatives on $\Omega_*$. However,
the next lemma states that unstable Lyapunov exponents are undefined on $\Omega_*$.
Hence, we may neglect $\Omega_*$ for our purpose.

\begin{lemma}\label{new}
If $x\in \Omega_*$, then $\underline{\lambda}^u(x)\neq \bar{\lambda}^u(x)$.
\end{lemma}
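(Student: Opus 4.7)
The plan is to assume $\lambda^u(x)=L$ exists (necessarily finite, since $\log J^u$ is bounded on $\Omega$) and extract a contradiction from the two-sided control provided by Lemma~\ref{binding}. Since $x\in\Omega_*$, the definition furnishes an increasing sequence $(n_k)$ of positive integers such that $d_{\rm crit}(f^{n_k}x)\leq b^{n_k/10}$; in particular $f^{n_k}x\in\Omega\cap I(\delta)$, and the binding point $\zeta_k=\zeta(f^{n_k}x)$ together with the fold and bound periods $p_k,q_k$ from Sect.~\ref{stablegeo} is well-defined and satisfies $|\zeta_k-f^{n_k}x|\leq b^{n_k/10}$. Inserting this into Lemma~\ref{binding}(a) and using $c(b)=-1/\log b$ gives
$$q_k\leq \frac{n_k}{10}\qquad\text{and}\qquad p_k\geq \frac{n_k|\log b|}{15}.$$

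Write $S_n=\sum_{i=0}^{n-1}\log J^u(f^ix)=\log\|D_xf^n|E^u_x\|$. By Lemma~\ref{eligible} a unit vector spanning $E^u_{f^{n_k}x}$ is in admissible position relative to $\zeta_k$, so Lemma~\ref{binding}(c),(d) applied along this vector yield the paired estimates
$$S_{n_k+p_k}-S_{n_k}\geq \frac{\lambda}{3}p_k,\qquad S_{n_k+q_k}-S_{n_k}\leq \log C+(1-c(b))\frac{n_k}{10}\log b.$$
The first records the strong recovery of the derivative after the fold period; the second, since $\log b\ll 0$, records the deep drop suffered during the short bound period.

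Now write $S_n=Ln+\alpha_n$ with $\alpha_n/n\to 0$. Substituting into the first estimate and dividing by $p_k$, the error $(\alpha_{n_k+p_k}-\alpha_{n_k})/p_k$ vanishes in the limit because $(n_k+p_k)/p_k\leq 1+15/|\log b|$ is bounded, and one obtains $L\geq \lambda/3$. Substituting into the second estimate and dividing by $n_k$, the analogous error vanishes since $(n_k+q_k)/n_k\leq 11/10$, and the limit inequality reads
$$L\cdot\limsup_{k\to\infty}\frac{q_k}{n_k}\leq \frac{(1-c(b))\log b}{10}<0.$$
Since $L\geq \lambda/3>0$ and $q_k/n_k\geq 0$, the left-hand side is non-negative, a contradiction.

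The main delicate point is arranging that the $o(\cdot)$ error terms coming from the hypothesis $S_n/n\to L$ do not swallow the useful content of Lemma~\ref{binding}(c),(d). This is precisely what the ratio bounds on $p_k/n_k$ and $q_k/n_k$, extracted from Lemma~\ref{binding}(a), accomplish; it is essential here that $b$ has been fixed small so that $|\log b|$ is large and the two conclusions $L\geq\lambda/3$ and $L\leq(1-c(b))\log b/10<0$ are truly inconsistent.
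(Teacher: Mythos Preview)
Your argument mirrors the paper's strategy: use the deep-return estimate of Lemma~\ref{binding}(d) to push the running Lyapunov average negative near times $n_k+q_k$, and the recovery estimate of Lemma~\ref{binding}(c) to push it above $\lambda/3$ near times $n_k+p_k$. The paper states this directly as $\underline{\lambda}^u(x)\leq\tfrac12\log b<0$ and $\bar{\lambda}^u(x)\geq\lambda/3$, while you assume a limit $L$ exists and extract contradictory signs. Your derivation of $L\geq\lambda/3$ via $p_k\geq n_k|\log b|/15$ is correct and is a pleasant variant of the paper's appeal to the full bound/free decomposition.

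There is, however, one unjustified step: the claim $q_k\leq n_k/10$. Lemma~\ref{binding}(a) gives $q\leq -c(b)\log|\zeta-x|$, and inserting $|\zeta_k-f^{n_k}x|\leq b^{n_k/10}$ yields
\[
-c(b)\log|\zeta_k-f^{n_k}x|\;\geq\;-c(b)\cdot\tfrac{n_k}{10}\log b\;=\;\tfrac{n_k}{10},
\]
the wrong direction: a deep return makes the available upper bound on $q$ \emph{large}, not small. (The paper's own proof contains the same slip.) Without a bound on $q_k/n_k$ you cannot control $(\alpha_{n_k+q_k}-\alpha_{n_k})/n_k$, so your displayed limit inequality does not follow as written. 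The repair is easy and does not require $q_k\leq n_k/10$ at all: from $S_{n_k}\leq n_k\log 5$ and Lemma~\ref{binding}(d) one gets
\[
S_{n_k+q_k}\leq n_k\Bigl(\log 5+\tfrac{(1-c(b))}{10}\log b\Bigr)+\log C<0
\]
for large $k$, hence $S_{n_k+q_k}/(n_k+q_k)\leq 0$ regardless of the size of $q_k$, already contradicting $L\geq\lambda/3$. Alternatively, if $q_k/n_k$ were unbounded, Lemma~\ref{binding}(e) gives $S_{n_k+i}<S_{n_k}\leq n_k\log 5$ for $1\leq i<q_k$, so $S_m/m\to 0$ along a subsequence, again contradicting $L\geq\lambda/3$.
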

\begin{proof}
Consider the bound/free structure in Sect.\ref{bf} for the forward orbit of $x$. By definition, 
$d_{\rm crit}(f^nx)\leq b^{\frac{n}{10}}$ holds for infinitely many $n>0$.
For these $n$, $f^nx$ is free. By Lemma \ref{binding} and \eqref{cb}, the corresponding fold period $q=q(f^nx)$
satisfies
$$q\leq -c(b) d_{\rm crit}(f^{n}x)\leq-c(b)\frac{n}{10}\log b=\frac{n}{10}.$$
Hence $n+q\leq(11/10)n$, and by Lemma \ref{binding}(c),
$$\|D_{f^{n}x}f^{q}|E^u_{f^{n}x}\|\leq Cd_{\rm crit}(f^{n}x)^{1-c(b)}
\leq Cb^{\frac{(1-c(b))}{10}n}\leq Cb^{\frac{(1-c(b))10}{11}(n+q)}.$$
Hence we have 
 $$\|D_{x}f^{n+q}|E^u_{x}\|=\|D_{x}f^{n}|E^u_{x}\|\cdot\|D_{f^{n}x}f^{q}|E^u_{f^{n}x}\|<5^{n}\cdot 
 Cb^{\frac{(1-c(b))10}{11}(n+q)} <b^{\frac{n+q}{2}}.$$
Since this holds for infinitely many $n>0$, we obtain $\underline{\lambda}^u(x)\leq (1/2)\log b<0$.
On the other hand, decomposing the forward orbit of $x$ into alternate bound and free segments,
and then applying the expansion estimates in Lemma \ref{hyperbolic} and Lemma \ref{binding} imply 
$\bar{\lambda}^u(x)\geq\lambda/3>0.$
\end{proof}

\begin{cor}\label{G0}
For any $\mu\in\mathcal M(f)$, $\mu(\Omega_*)=0$.
\end{cor}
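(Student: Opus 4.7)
The plan is to deduce Corollary \ref{G0} directly from Lemma \ref{new} together with a version of the Birkhoff ergodic theorem valid for observables that are only one-sidedly integrable. By Lemma \ref{new}, every $x \in \Omega_*$ satisfies $\underline{\lambda}^u(x) \neq \bar{\lambda}^u(x)$, so it suffices to show that for any $\mu \in \mathcal M(f)$, the Birkhoff averages $\frac{1}{n}\sum_{i=0}^{n-1}\log J^u \circ f^i$ converge $\mu$-almost everywhere to some extended-real value, i.e.\ that the set where $\underline{\lambda}^u$ and $\bar{\lambda}^u$ disagree has $\mu$-measure zero.

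The key preliminary observation is that $\log J^u$ is bounded \emph{above} on $\Omega$. Indeed, $J^u(x) = \|D_xf|E^u_x\| \leq \|D_xf\|$, and the right-hand side is continuous in $x$ and thus uniformly bounded on the compact set $\Omega$. Consequently $(\log J^u)^+$ is a bounded measurable function, hence an element of $L^1(\mu)$ for every $\mu \in \mathcal M(f)$, even though $\log J^u$ itself may fail to be integrable (it can be arbitrarily negative near $Q$, where $E^u$ is only measurable).

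Now I invoke the standard one-sided extension of the Birkhoff ergodic theorem: if $g$ is measurable and $g^+ \in L^1(\mu)$, then $\frac{1}{n}\sum_{i=0}^{n-1} g(f^i x)$ converges for $\mu$-a.e.\ $x$ to a measurable function with values in $[-\infty,\infty)$. Applied to $g = \log J^u$, this yields a set $E \subset \Omega$ with $\mu(E) = 1$ on which $\underline{\lambda}^u(x) = \bar{\lambda}^u(x)$. Combining with the contrapositive of Lemma \ref{new}, which gives the inclusion
\[
\Omega_* \;\subset\; \{x \in \Omega : \underline{\lambda}^u(x) \neq \bar{\lambda}^u(x)\} \;\subset\; \Omega \setminus E,
\]
we obtain $\mu(\Omega_*) \leq \mu(\Omega \setminus E) = 0$, as required. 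There is no real obstacle here beyond correctly identifying the weakest integrability assumption needed for the Birkhoff theorem; the compactness of $\Omega$ and the boundedness of $Df$ deliver the upper bound on $\log J^u$ for free.
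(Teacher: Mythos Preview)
Your proof is correct and follows essentially the same route as the paper: combine Lemma~\ref{new} with the Birkhoff ergodic theorem to conclude that $\{\underline{\lambda}^u\neq\bar{\lambda}^u\}$ is $\mu$-null. The paper first reduces to ergodic $\mu$ via the ergodic decomposition (a harmless but unnecessary step, since Birkhoff already gives a.e.\ convergence for arbitrary invariant $\mu$), whereas you are more explicit about the integrability hypothesis, invoking the one-sided version with $(\log J^u)^+\in L^1(\mu)$; both arrive at the same conclusion.
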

\begin{proof}
From the ergodic decomposition, it suffices to consider the case
where $\mu$ is ergodic. From the Ergodic Theorem, $\underline{\lambda}^u(x)= \bar{\lambda}^u(x)$
holds for $\mu$-a.e. $x$. Hence $\mu(\Omega_*)=0$.
\end{proof}

\begin{figure}
\begin{center}
\includegraphics[height=4cm,width=10cm]{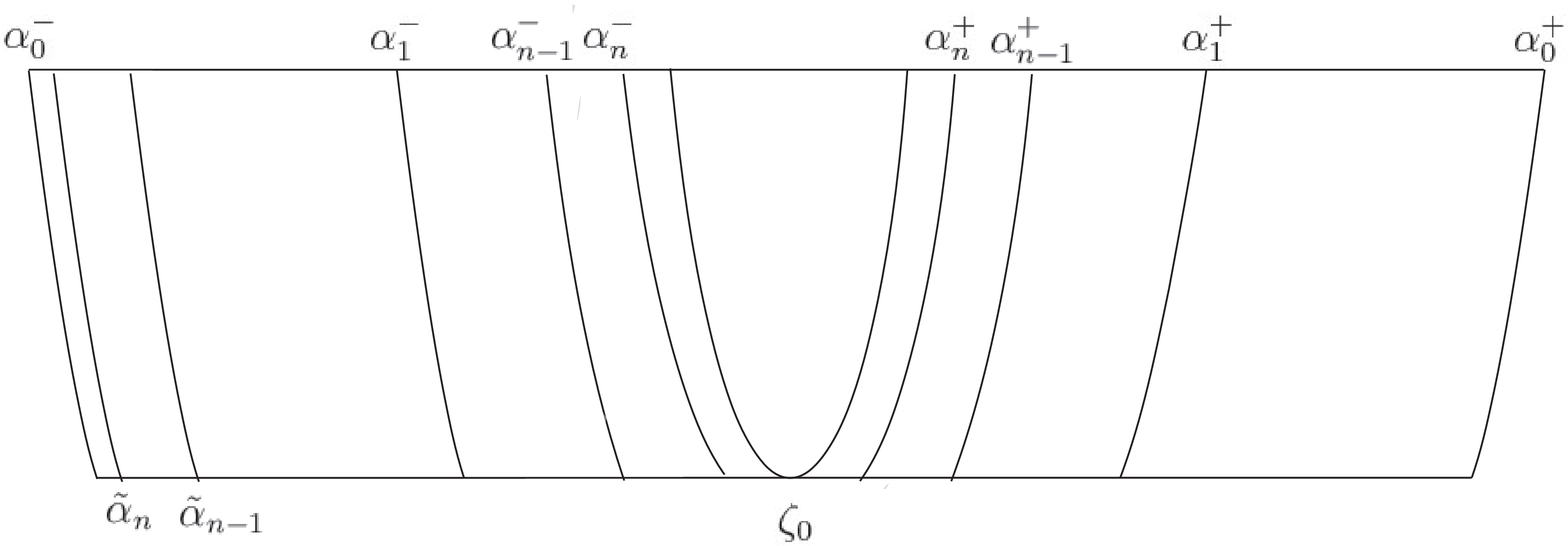}
\caption{The rectangle $R$ and the curves $\{\tilde\alpha_n\}$, $\{\alpha_n^{+}\}$, $\{\alpha_n^{-}\}$.
The $\{\tilde\alpha_n\}$ accumulate on the left stable side of $R$. Both $\{\alpha_n^{+}\}$ and $\{\alpha_n^{-}\}$ accumulate on the parabola $f^{-1}\alpha_0^+\cap R$
containing the point of tangency $\zeta_0$ near the origin.}
\end{center}
\end{figure}

\subsection{Inducing}\label{induced map}
We now recall the inducing construction performed in \cite{SenTak2}.
Define a sequence $\{\tilde\alpha_n\}_{n=0}^\infty$ of compact curves in $W^s(P)\cap R$ inductively as follows.
First, set $\tilde\alpha_0=\alpha_1^+$. Given $\tilde\alpha_{n-1}$, define $\tilde\alpha_n$ to be one of the two connected components of 
$R\cap f^{-1}\tilde\alpha_{n-1}$ which is at the left of $\zeta_0$. Observe that $\tilde\alpha_1=\alpha_1^-$. By the Inclination Lemma, 
the Hausdorff distance between $\tilde\alpha_n$ and $\alpha_0^-$ converges to $0$ as $n\to\infty$.

For each $n\geq0$ let $\alpha_n$ denote the connected component of $R\cap f^{-1}\tilde\alpha_n$ 
which is not $\tilde\alpha_{n+1}$. 
The set $R\cap f^{-1}\alpha_n$ consists of two curves, one at the left
of $\zeta_0$ and the other at the right. They are denoted by $\alpha_{n+1}^-$,
$\alpha_{n+1}^+$ respectively.
By definition, these curves obey the following diagram
\begin{equation*}
\{\alpha_{n+1}^-,\alpha_{n+1}^+\}\stackrel{f^2}{\to}\tilde\alpha_n
\stackrel{f}{\to}\tilde\alpha_{n-1}
\stackrel{f}{\to}\tilde\alpha_{n-2}\stackrel{f}{\to}\cdots
\stackrel{f}{\to}\tilde\alpha_1=\alpha_1^-\stackrel{f}{\to}
\tilde\alpha_0=\alpha_1^+.\end{equation*}

Define $r\colon \Theta\to \mathbb N\cup\{\infty\}$ by
$$r(x)=\inf(\{n>0\colon f^nx\in\Theta\}\cup\{\infty\}),$$
which is the first return time of $x$ to $\Theta$.
Note that:
\begin{itemize}

\item $r(x)=1$ if and only if $x\in\alpha_1^-\cup\alpha_1^+$;
$r(x)=n+1$ $(n\geq1)$ if and only if $x$ is sandwiched by $\alpha_{n}^+$ and $\alpha_{n+1}^+$, or
by $\alpha_{n}^-$ and $\alpha_{n+1}^-$;
 $r(x)=\infty$ if and only if $x\in S$;

\item each level set of $r$ except $S$ has exactly two connected components.

\end{itemize}

Let $\mathcal P$ denote the partition of the set $\Theta\setminus(S\cup\alpha_1^-\cup\alpha_1^+)$ into connected components of the level sets of 
the function $r$. 
The $\mathcal P$ is well-defined because 
$\alpha_n$ and $\alpha_0^+$ are long stable leaves, and the Hausdorff distance between 
them converges to $0$ as
$n\to\infty$ by Lemma \ref{leaf1}(b). 
 Set $\mathcal P_1=\{\omega=\overline{\eta}\colon\eta\in\mathcal P\}$,
where the bar denotes the closure operation.
For each $n\geq2$ define
$$\mathcal P_n=\left\{\omega_0\cap \bigcap_{i=1}^{n-1} f^{-r(\omega_0)}\circ f^{-r(\omega_1)}
\circ\cdots\circ f^{-r(\omega_{i-1})}\omega_i
\colon\omega_0,\omega_1,\ldots,\omega_{n-1}\in \mathcal P_1\right\}.$$
Elements of $\bigcup_{n\geq0}\mathcal P_n$ are called \emph{proper rectangles.}
It is easy to see the following holds:

\begin{itemize}

\item the unstable sides of a proper rectangle are formed by two curves contained in the unstable sides of $\Theta$.
Its stable sides are formed by two curves contained in $W^s(P)$;

\item two proper rectangles are either nested, disjoint, or intersect each other only at their common stable sides. 
\end{itemize}
On the interior of each $\omega\in\mathcal P_1$, the value of $r$ is constant. This value is denoted by $r(\omega)$.
For each $\omega\in\mathcal P_n$ 
define its \emph{inducing time} $\tau(\omega)$ by
\begin{equation}\label{tau}\tau(\omega)=\sum_{i=0}^{n-1}r(\omega_i).\end{equation}
It is easy to see the following holds:
\begin{itemize}

\item
the unstable sides of $f^{\tau(\omega)}\omega$ are formed by two curves in $\tilde\Gamma^u$.
Its stable sides are formed by two curves contained in the stable sides of $\Theta$ (See FIGURE 5);

\item let $k\in(0,\tau(\omega))$. Then
${\rm int}\Theta\cap f^k\omega\neq\emptyset$ if and only if $k=r(\omega_0)+\cdots+r(\omega_i)$ for some $i\in[0,n-1]$.

\end{itemize}

\begin{lemma}\label{global}
For any $\gamma^u\in\Gamma^u$ and any proper rectangle $\omega$,
$\gamma^u\cap\omega$ is a compact curve joining the stable sides of $\omega$. In addition,
\begin{itemize}

\item[(a)] 
$\displaystyle{\sup_{x\in\gamma^u\cap\omega}\|D_xf^{\tau(\omega)}|E_x^u\|\geq e^{\frac{\lambda}{3}\tau(\omega)}};$

\item[(b)] $\displaystyle{\sup_{x,y\in\gamma^u\cap\omega}
\frac{\|D_yf^{\tau(\omega)}|E_y^u\|}{\|D_xf^{\tau(\omega)}|E_x^u\|}        \leq
C|f^{\tau(\omega)}x-f^{\tau(\omega)}y|}.$

\end{itemize}
\end{lemma}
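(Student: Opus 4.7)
The plan is to establish the crossing statement and part (a) together by induction on the depth $n\ge 1$ with $\omega\in\mathcal P_n$, and then derive (b) from the bound-free structure along orbits together with hypothesis \eqref{jacobian}. In the base case $n=1$, $\omega$ is bordered on its stable sides by two consecutive curves in the family $\{\alpha_1^\pm\}\cup\{\alpha_k^\pm\}$, each of which is a long stable leaf (by Lemma \ref{leaf1}(b) applied to its defining backward iteration). Any $\gamma^u\in\Gamma^u$ is a $C^1$ limit of elements of $\tilde\Gamma^u$, so by Lemma \ref{nogu} its endpoints lie on $\alpha_1^\pm$ and its tangent slopes are $\le\sqrt b$; hence $\gamma^u\cap\omega$ is a single compact subarc joining the two nearly vertical stable sides of $\omega$. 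For (a) at depth $1$, I would follow a favorable $x\in\gamma^u\cap\omega$ through the bound-free decomposition of its orbit during the $r(\omega)$ iterates, combining the free-segment expansion $e^\lambda$ of Lemma \ref{hyperbolic}(a) with the bound-segment expansion $e^{\lambda/3}$ of Lemma \ref{binding}(c).

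For the inductive step, any $\omega\in\mathcal P_{n+1}$ factors as $\omega=\omega_0\cap f^{-r(\omega_0)}\omega'$ with $\omega_0\in\mathcal P_1$ and $\omega'\in\mathcal P_n$. The inductive hypothesis applied to $\omega_0$ gives that $\gamma^u\cap\omega_0$ is a crossing curve; using Lemmas \ref{sae} and \ref{curvature2} together with the expansion already established, its image $f^{r(\omega_0)}(\gamma^u\cap\omega_0)$ is a $C^2(b)$-curve in $\Theta$ joining its two stable sides, lying (after closure) in $\mathcal W^u$. Applying the inductive hypothesis to this new curve and the rectangle $\omega'$ yields the crossing claim for $\omega$, and the chain rule multiplies the two expansion bounds into the required rate $e^{(\lambda/3)\tau(\omega)}$.

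For (b), where hypothesis \eqref{jacobian} enters as flagged in the footnote, I would write
\begin{equation*}
\log\frac{\|D_yf^{\tau(\omega)}|E^u_y\|}{\|D_xf^{\tau(\omega)}|E^u_x\|}=\sum_{k=0}^{\tau(\omega)-1}\Bigl(\log\|Df|E^u_{f^ky}\|-\log\|Df|E^u_{f^kx}\|\Bigr),
\end{equation*}
and express each $\|Df|E^u\|$ as the ratio $|\det Df|/\|Df|E^s\|$ multiplied by bounded-below angle factors (available since $E^u$ is nearly horizontal and $E^s$ nearly vertical on $\Omega$). The $|\det Df|$ contribution is controlled by $\|D\log|\det Df|\|\le C$ from \eqref{jacobian}; the stable-direction contribution by the $C^1$ bounds on $e^s$ and its exponential contraction; and the angle term by the smoothness of the splittings. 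Part (a) yields a backward contraction $|f^kx-f^ky|\le Ce^{-c(\tau(\omega)-k)}|f^{\tau(\omega)}x-f^{\tau(\omega)}y|$ for some $c>0$, so summing the Lipschitz estimates produces a geometric series dominated by $C|f^{\tau(\omega)}x-f^{\tau(\omega)}y|$, which is (b).

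The hardest step is (b). Lipschitzing $\log\|Df|E^u\|$ directly along an orbit would require controlling the variation of the splitting $E^u$ itself, which degenerates as the orbit passes through the critical region. This is why hypothesis \eqref{jacobian} is indispensable: it lets one trade the singular quantity $\log\|Df|E^u\|$ for the uniformly regular $\log|\det Df|$, so that the remaining work is confined to stable-direction and angle terms, where the long stable leaves of Lemma \ref{leaf1} and uniform transversality on controlled pieces of orbit supply the needed control.
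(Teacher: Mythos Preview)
Your crossing argument is correct but more laborious than the paper's: since the stable sides of any proper rectangle are compact nearly vertical curves in $W^s(P)$ and every curve in $\tilde\Gamma^u$ is $C^2(b)$ (Lemma~\ref{nogu}), each $\tilde\gamma\in\tilde\Gamma^u$ meets each stable side of $\omega$ transversely at exactly one point; passing to the $C^1$ limit gives the crossing for $\gamma^u\in\Gamma^u$ in one stroke, with no induction needed. For (a) and (b) the paper simply refers to \cite[Lemma~3.5]{SenTak2}, so your attempt is a direct argument where the paper outsources; your sketch for (a) via the bound/free decomposition is essentially the right idea.

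The substantive problem is your argument for (b). First, there is no globally defined invariant stable direction $E^s$ along orbits in $\Omega$: the field $e^s$ of Section~\ref{critical} exists only on $U=U^-\cup U^+$, so the identity $\|Df|E^u\|=|\det Df|\cdot(\text{angle factors})/\|Df|E^s\|$ is unavailable at generic iterates $f^kx$. Second, even if you substitute a most-contracted direction, the angle between $E^u$ and any transverse direction is \emph{not} bounded below along the orbit: at a return to $I(\delta)$ the image $D_xf(E^u_x)$ is nearly tangent to $e^s(fx)$ (this is precisely the fold), so your ``angle factors'' blow up and the term-by-term Lipschitz estimate fails exactly where the difficulty lies. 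Third, the backward contraction $|f^kx-f^ky|\le Ce^{-c(\tau(\omega)-k)}|f^{\tau(\omega)}x-f^{\tau(\omega)}y|$ does not follow from (a): part (a) controls only the \emph{supremum} of the derivative over $\gamma^u\cap\omega$, not the infimum, and a uniform lower bound on the derivative is exactly what (b) is meant to deliver, so invoking it here is circular. The actual proof in \cite{SenTak2} handles (b) by tracking distortion through the bound/free structure: on free segments $\log J^u$ is genuinely Lipschitz, while on bound segments one compares both orbits to the same critical orbit via the splitting in Lemma~\ref{binding}, and the role of \eqref{jacobian} is more delicate than a one-line substitution.
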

\begin{proof}
From the first property of the proper rectangles and Lemma \ref{nogu},
any curve in $\tilde\Gamma^u$ intersects any of the stable sides of $\omega$ exactly at one point,
and this intersection is transverse.
Since $\gamma^u$ is a $C^1$-limit of curves in $\tilde\Gamma^u$, 
The first assertion follows.
For (a) (b), see \cite[Lemma 3.5]{SenTak2}.
\end{proof}


\begin{lemma}\label{globals}
The following holds for each $\omega\in\mathcal P_{n}$:
\begin{itemize}

\item[(a)] $\tau(\omega)\geq 2n$;

\item[(b)] let $\partial^u\omega$ denote any unstable side of $\omega$. Then ${\rm length}(\partial^u\omega)\leq e^{-\lambda n}$;

\item[(c)] if $x\in\omega$, then 
$d_{\rm crit}(f^nx)\geq e^{-10\tau(\omega)}$ for every $0\leq n\leq \tau(\omega)-1.$

\end{itemize}

\end{lemma}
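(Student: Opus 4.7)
Parts (a) and (b) are bookkeeping. For (a), observe that $r(x)=1$ holds exactly on the curves $\alpha_1^-\cup\alpha_1^+$, so every $\omega'\in\mathcal P_1$ satisfies $r(\omega')\ge 2$; summing the return times of the $n$ factors in the construction of $\omega\in\mathcal P_n$ yields $\tau(\omega)=\sum_{i=0}^{n-1}r(\omega_i)\ge 2n$. For (b), the image $f^{\tau(\omega)}\omega$ is a proper rectangle whose unstable sides lie in $\tilde\Gamma^u$ and hence stretch between the two stable sides of $\Theta$, so their length is bounded by a universal constant. Lemma \ref{global}(a) supplies some $y\in\partial^u\omega$ with $\|D_yf^{\tau(\omega)}|E_y^u\|\ge e^{(\lambda/3)\tau(\omega)}$, and the distortion estimate in Lemma \ref{global}(b) upgrades this to a uniform lower bound for the expansion on all of $\partial^u\omega$. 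Pulling back the length bound on $f^{\tau(\omega)}(\partial^u\omega)$ and using $\tau(\omega)\ge 2n$ from (a) yields $\mathrm{length}(\partial^u\omega)\le Ce^{-(\lambda/3)\tau(\omega)}\le Ce^{-(2\lambda/3)n}$, which fits under $e^{-\lambda n}$ after a harmless reduction of the constant $\lambda$ (or, equivalently, by handling the finite range of small $n$ trivially).

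The substance of the lemma is (c), which I approach by contradiction. Suppose some $m\in[0,\tau(\omega)-1]$ satisfies $d_{\mathrm{crit}}(f^mx)<e^{-10\tau(\omega)}$; then $f^mx\in I(\delta)$ and $|f^mx-\zeta(f^mx)|<e^{-10\tau(\omega)}$. Lemma \ref{binding}(a) then forces the fold period to satisfy
\[
p(f^mx)\ge -\tfrac{2}{3}\log|f^mx-\zeta(f^mx)|>\tfrac{20}{3}\tau(\omega),
\]
so in particular $1\le\tau(\omega)-m<p(f^mx)$. By Lemma \ref{binding}(b) this places the point $f^{\tau(\omega)}x=f^{m+(\tau(\omega)-m)}x$ inside the set $U=U^-\cup U^+$. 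But by the inducing construction, $f^{\tau(\omega)}\omega$ is a proper rectangle spanning $\Theta$, so $f^{\tau(\omega)}x\in\Theta$. This contradicts the geometric fact that $U$ is disjoint from $\Theta$, since $U^\pm$ are localized neighborhoods of the stable sides $\alpha_0^\pm$ of $R$ while $\Theta$ is bordered by $\alpha_1^\pm\subset W^s(P)$ lying strictly inside $R$.

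The only delicate point is the geometric exclusion $U\cap\Theta=\emptyset$ invoked at the end. In principle this is built into the choice of $U^\pm$ from \cite{SenTak1}; should one prefer to extract it from the abstract properties already listed, one tracks the orbit inside $U$ using $U^+\cap fU^+=\emptyset=U^+\cap fU^-$ to see that after at most one step the orbit transitions into $U^-$ and then shadows $\alpha_0^-\subset W^s(Q)$, on which the critical orbit accumulates toward $Q$, while $\Theta$ sits on the opposite side of $R$. Once this geometric input is accepted, (a)--(c) follow formally from Lemmas \ref{hyperbolic}, \ref{binding}, and \ref{global}.
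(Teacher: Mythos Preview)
Your argument matches the paper's proof in substance; the paper's own proof is extremely terse (one line for each of (a), (b), (c)), and your write-up simply unpacks the same reasoning. For (c), where the paper writes only ``if (c) is not the case, then $f^{\tau(\omega)}x$ is still close to $Q$, a contradiction,'' your explicit appeal to the bound period via Lemma~\ref{binding}(a)(b) and the geometric fact $U\cap\Theta=\emptyset$ is exactly the content behind that sentence: the deep return forces $f^{\tau(\omega)}x$ to remain bound to the critical orbit in $U$, hence near the sides $\alpha_0^\pm$ of $R$ and away from $\Theta$.

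One comment on (b): both you and the paper pull back through the expansion rate $e^{(\lambda/3)\tau(\omega)}$ from Lemma~\ref{global}(a) and use $\tau(\omega)\ge 2n$, which literally gives $\mathrm{length}(\partial^u\omega)\le Ce^{-(2\lambda/3)n}$ rather than $e^{-\lambda n}$. You flag this and propose absorbing it into $\lambda$; that is the right attitude, since the only later use of (b) (in the upper estimate of Theorem~B) needs merely some exponential decay in $n$, not the specific rate $\lambda$.
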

\begin{proof}
(a) follows from \eqref{tau} and $\min\{r(\omega)\colon \omega\in\mathcal P_1\}=2$.
(b) follows from (a) and the fact that
$f^{\tau(\omega)}$ maps $\partial^u\omega$ with uniform expansion as in Lemma \ref{global} 
to a curve in $\tilde\Gamma^u$ of length nearly $1$.
If (c) is not the case, then
$f^{\tau(\omega)}x$ is still close to $Q$, a contradiction.
\end{proof}

\begin{figure}
\begin{center}
\includegraphics[height=4cm,width=10cm]{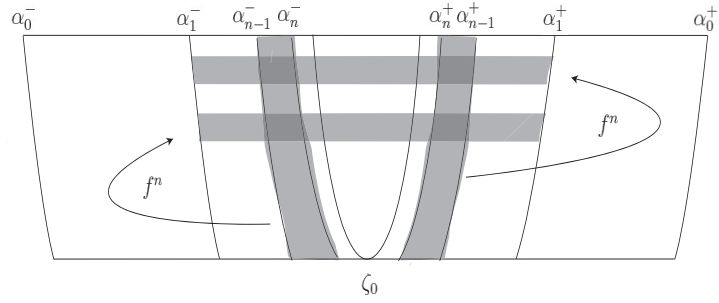}
\caption{The proper rectangles (shaded) in $\mathcal P_1$ with inducing time $n$ and their $f^n$-images}
\end{center}
\end{figure}

\subsection{Rectangles containing points without too deep returns}\label{neg}
We need two lemmas on the recurrence properties of proper rectangles intersecting $G_m$.

\begin{lemma}\label{nonint}
Let $\omega$ be a proper rectangle such that $\omega\cap G_m\neq\emptyset$
for some $m\geq0$. If $\tau(\omega)>m$,
then for any $x\in\omega$,  
$$d_{\rm crit}(f^nx)> b^{\frac{n}{9}}\
\text{ for every }\ m\leq n\leq\tau(\omega)-1.$$

\end{lemma}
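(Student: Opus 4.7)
\smallskip\noindent\textbf{Plan.} My strategy is to leverage the given point $y\in\omega\cap G_m$ and transfer its recurrence estimate $d_{\rm crit}(f^ny)>b^{n/10}$ to the slightly weaker estimate $d_{\rm crit}(f^nx)>b^{n/9}$ for arbitrary $x\in\omega$.

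First I would synchronize the two orbits. Since $x,y$ both lie in $\omega\in\mathcal P_N$, their forward iterates visit $\Theta$ at identical times and land in a common nested sequence of elements of $\mathcal P$. Combined with the uniform hyperbolicity outside $I(\delta)$ (Lemma \ref{hyperbolic}) and the binding construction of Section \ref{critical}, this synchronizes the full bound/free structures of the two orbits up to time $\tau(\omega)$. In particular, whenever $n\in[m,\tau(\omega)-1]$ is not a return time of $y$ to $I(\delta)$, then $f^nx\notin I(\delta)$ too, so $d_{\rm crit}(f^nx)=1>b^{n/9}$ trivially. This reduces the task to return times $n_k\in[m,\tau(\omega)-1]$ for which $f^{n_k}x,\,f^{n_k}y\in I(\delta)$.

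For such an $n_k$ close to $\tau(\omega)$, the bound $d_{\rm crit}(f^{n_k}x)\geq e^{-10\tau(\omega)}$ of Lemma \ref{globals}(c) already implies $d_{\rm crit}(f^{n_k}x)>b^{n_k/9}$, since $e^{-10\tau(\omega)}>b^{n_k/9}$ as soon as $n_k\geq 90\tau(\omega)/|\log b|$, automatic for $b$ small. For the remaining range of $n_k$ I would compare $f^{n_k}x$ with $f^{n_k}y$. The target estimate is
\[
|f^{n_k}x-f^{n_k}y|\ll b^{n_k/10}.
\]
By Lemma \ref{sr} applied to $y\in G_m$, some $f^{k_0}y$ with $k_0\leq m$ is controlled and thus lies on a long stable leaf (Lemma \ref{leaf1}), along which the stable component between the two orbits contracts at rate $Cb^{j/2}$; meanwhile, the unstable component at time $n_k$ is dominated by the unstable side length of the sub-rectangle of $\mathcal P$ currently containing $\{x,y\}$, which by Lemma \ref{global}(a) satisfies an $e^{-\lambda(\tau(\omega)-n_k)/3}$ bound. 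Using discreteness of the critical set in $W^u\cap I(\delta)$ to conclude $\zeta(f^{n_k}x)=\zeta(f^{n_k}y)$, I then obtain
\[
d_{\rm crit}(f^{n_k}x)\geq d_{\rm crit}(f^{n_k}y)-|f^{n_k}x-f^{n_k}y|>b^{n_k/10}-\tfrac12 b^{n_k/10}>b^{n_k/9},
\]
the last inequality because $b^{n_k/90}<\tfrac12$ for $b$ small.

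The main obstacle is the distance estimate $|f^{n_k}x-f^{n_k}y|\ll b^{n_k/10}$ for return times in the intermediate range. Producing it requires carefully combining the folding estimates of Lemma \ref{binding} during bound segments with the uniform expansion of Lemma \ref{hyperbolic} during free segments, and exploiting the refinement structure of $\mathcal P$ (via Lemma \ref{globals}) to show that the sub-rectangle containing $\{x,y\}$ at time $n_k$ has already become sufficiently narrow. A secondary subtlety is the binding-point coincidence, which requires the critical set in $W^u\cap I(\delta)$ to be sufficiently discrete relative to the derived separation.
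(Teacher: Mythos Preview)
Your approach is genuinely different from the paper's, and it contains a real gap in the range you call ``intermediate''. Let me explain both.

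\textbf{The gap.} Your two cases do not cover $[m,\tau(\omega)-1]$. Case~1 (via Lemma~\ref{globals}(c)) needs $e^{-10\tau(\omega)}>b^{n_k/9}$, i.e.\ $n_k>90\tau(\omega)/|\log b|$. Case~2 needs the unstable width of $f^{n_k}\omega$ to be $<\tfrac12 b^{n_k/10}$; even granting your (optimistic) bound $e^{-\lambda(\tau(\omega)-n_k)/3}$ on that width, this requires $\tau(\omega)-n_k>3n_k|\log b|/(10\lambda)$. At the boundary $n_k=90\tau(\omega)/|\log b|$ this forces $\tau(\omega)(1-90/|\log b|)>27\tau(\omega)/\lambda$, i.e.\ $\lambda>27$, which is impossible since $\lambda<\log 2$. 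So for $n_k$ roughly in the window $[c\tau(\omega)/|\log b|,\,c'\tau(\omega)]$ neither case applies. More concretely: at a free return $n_k$ just one or two steps before a return to $\Theta$, the unstable side of $f^{n_k}\omega$ can have length of order~$1$, vastly larger than $b^{n_k/10}$, so the triangle inequality $d_{\rm crit}(f^{n_k}x)\ge d_{\rm crit}(f^{n_k}y)-|f^{n_k}x-f^{n_k}y|$ gives nothing. Your acknowledged ``main obstacle'' is not a technicality; the inequality $|f^{n_k}x-f^{n_k}y|\ll b^{n_k/10}$ is simply false in general.

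\textbf{What the paper does instead.} The paper bypasses any distance estimate. The key observation is that when $f^n\omega\cap I(\delta)\neq\emptyset$ with $n<\tau(\omega)$, the image $f^{n+1}\omega$ is trapped between two \emph{consecutive} curves $\alpha_k,\alpha_{k+1}$ from the inducing construction of Sect.~\ref{induced map}. These curves converge to $\alpha_0^+$ at exponential rate $\log 4$, so the distances of $f^{n+1}x$ and $f^{n+1}x_0$ to $\alpha_0^+$ differ by at most a factor~$16$. Since the map is quadratic near the critical set, $d_{\rm crit}(z)^2$ is comparable to the distance from $fz$ to $\alpha_0^+$; hence $d_{\rm crit}(f^nx)>\tfrac14 d_{\rm crit}(f^nx_0)\ge\tfrac14 b^{n/10}>b^{n/9}$. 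The point is that the \emph{ratio} $d_{\rm crit}(f^nx)/d_{\rm crit}(f^nx_0)$ is uniformly bounded even when $f^n\omega$ is wide, because the proper-rectangle structure forces $f^{n+1}\omega$ into a single ``layer'' of the $\{\alpha_k\}$ foliation. This is a three-line argument once the inducing geometry is in hand.
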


\begin{proof}
 Let $m\leq n\leq\tau(\omega)-1$ be such that $f^n\omega\cap I(\delta)\neq\emptyset$.
 Choose $x_0\in\omega\cap G_m$.
 The $f^{n+1}\omega$ is contained in a rectangle whose stable sides are
 two neighboring curves in
 $\{\alpha_k\}_{k>0}$. 
 From the quadratic behavior near the critical points and 
 the exponential convergence of the curves  $\{\alpha_k\}_{k>0}$
 to $\alpha_0$ with exponent $\log 4$,
 for any $x\in\omega$
 we have $2d_{\rm crit}(f^nx)^2> (1/16)2d_{\rm crit}(f^nx_0)^2$.
  This yields
   $d_{\rm crit}(f^nx)> (1/4)d_{\rm crit}(f^nx_0)\geq(1/4)
   b^{\frac{n}{10}}> b^{\frac{n}{9}}$.
\end{proof}

\begin{lemma}\label{behave}
Let $\omega$ be a proper rectangle such that $\omega\cap G_m\neq\emptyset$
for some $m\geq0$. If $\tau(\omega)>m$, then
there exists $k\in[0,m]$ such that the stable sides of $f^k\omega$ are contained in long stable leaves.
\end{lemma}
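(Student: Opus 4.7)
The plan is to apply Lemma \ref{sr} to extract a controlled point from $\omega\cap G_m$, invoke Lemma \ref{leaf1} to produce a long stable leaf, and then sandwich the stable sides of $f^k\omega$ by long stable leaves through nearby controlled $\Omega$-points. First I would pick any $x\in\omega\cap G_m$: the definition of $G_m$ gives $d_{\rm crit}(f^nx)>b^{n/10}$ for every $n\geq m$, and since $0<b<1$ together with $1/10<1/9$ forces $b^{n/10}>b^{n/9}$, the hypothesis of Lemma \ref{sr} holds. This yields $k\in[0,m]$ such that $f^kx$ is controlled, and Lemma \ref{leaf1} supplies a unique long stable leaf $\gamma^s(f^kx)$. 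This $k$ is the one I propose.

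Next I would identify the stable sides of $f^k\omega$ as subsets of long stable leaves by approximation. Let $\sigma$ be a stable side of $\omega$. Points of $\omega\cap\Omega$ lying arbitrarily close to $\sigma$ can be selected along long unstable leaves crossing $\omega$, which exist densely thanks to $\Omega\cap\Theta\subset\mathcal W^u$ and the first listed property of proper rectangles in Sect.\ref{induced map}. For any such $y\in\omega\cap\Omega$ sufficiently close to $\sigma$, Lemma \ref{nonint} gives $d_{\rm crit}(f^ny)>b^{n/9}$ for $m\leq n\leq\tau(\omega)-1$; for $n\geq\tau(\omega)$ the analogous estimate is inherited by iterating the same scheme on the subrectangle of $\mathcal P_1$ containing $f^{\tau(\omega)}y$. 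Lemma \ref{sr} then produces controlling times $k_y\in[0,m]$; pigeonholing on the finite set $[0,m]\cap\mathbb Z$ and passing to a subsequence $(y_j)$ I may assume $k_{y_j}=k$ for all $j$. Lemma \ref{leaf1}(b) implies the long stable leaves $\gamma^s(f^ky_j)$ converge in Hausdorff distance to a vertical $C^2(b)$-curve $\gamma^s_\infty$ containing $f^k\sigma$, and the tracking estimate $|f^nu-f^nv|\leq Cb^{n/2}$ passes to pointwise limits, making $\gamma^s_\infty$ a long stable leaf.

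The main obstacle will be the approximation argument in the second step: stable sides of $\omega$ lie in $W^s(P)$ but are not automatically contained in $\Omega$, while Lemma \ref{leaf1} applies only to controlled $\Omega$-points. Bridging this gap demands both a dense supply of controlled $\Omega$-points in $\omega$ near each stable side, and a uniform controlling time $k$ across the approximating sequence; the latter is secured by pigeonholing on the finite set $[0,m]\cap\mathbb Z$ together with the Hausdorff-Lipschitz continuity of long stable leaves afforded by Lemma \ref{leaf1}(b).
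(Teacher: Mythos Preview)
Your approximation route diverges from the paper's argument, and the step you flag as the ``main obstacle'' is in fact a genuine gap. The paper avoids approximation entirely by taking $z$ to be an \emph{endpoint} of a stable side $\partial^s\omega$. Such a $z$ lies on an unstable side of $R$ (hence on $W^u$) and on $W^s(P)$, so $z\in\Omega$, and Lemma~\ref{leaf1} applies directly once one checks $d_{\rm crit}(f^nz)>b^{n/9}$ for all $n\ge m$. For $m\le n<\tau(\omega)$ this is Lemma~\ref{nonint}; for $n\ge\tau(\omega)$ the paper exploits the explicit structure of proper rectangles: $f^{\tau(\omega)}z\in\alpha_1^-\cup\alpha_1^+$, and since $f^{n-\tau(\omega)}(\alpha_1^-\cup\alpha_1^+)\subset\alpha_1^+$ for all $n\ge\tau(\omega)$, the point $f^nz$ stays out of $I(\delta)$ forever, so $d_{\rm crit}(f^nz)=1$. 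Your first paragraph, producing a $k$ from an arbitrary $x\in\omega\cap G_m$, is then superfluous and disconnected from the rest.

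Your treatment of the range $n\ge\tau(\omega)$ for a generic approximant $y\in\omega\cap\Omega$ near $\partial^s\omega$ does not work as written. The phrase ``iterating the same scheme on the subrectangle of $\mathcal P_1$ containing $f^{\tau(\omega)}y$'' would require that this new rectangle also intersect some $G_{m'}$ compatible with the shifted time index, which you have not arranged, and the iteration would have to continue indefinitely with uniform control. Even granting this, your pigeonholing yields a common $k$ only along a subsequence approaching a \emph{single} stable side, whereas the lemma asks for one $k$ serving both sides simultaneously. Finally, a Hausdorff limit of long stable leaves need not obviously be a long stable leaf: the exponential tracking passes to pointwise limits, but the $C^2(b)$ curvature bound in the definition of a vertical $C^2(b)$-curve is not preserved under mere Hausdorff convergence. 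The paper's direct argument with the corner point $z$ sidesteps all of these issues.
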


\begin{proof}
Let $\partial^s\omega$ denote any stable side of $\omega$ and $z$ an endpoint of 
$\partial^s\omega$.
By Lemma \ref{leaf1} and Lemma \ref{sr}, it suffices to show
$d_{\rm crit}(f^nz)> b^{\frac{n}{9}}$ for every $n\geq m.$
Since $\omega\cap G_m\neq\emptyset$,
 this for $m\leq n\leq \tau(\omega)-1$
 follows from Lemma \ref{nonint}.
Since $f^{\tau(\omega)}z\in\alpha_1^-\cup\alpha_1^+$, 
for every $n\geq\tau(\omega)$ we have $f^nz\in f^{n-\tau(\omega)}(\alpha_1^-\cup \alpha_1^+)\subset\alpha_1^+$, and so the desired inequality
for every $n\geq\tau(\omega)$.
\end{proof}

\subsection{Symbolic dynamics}\label{horse}
Let $\mathcal A$ be a finite collection of
proper rectangles contained in the interior of $\Theta$, 
labeled with $1,2,\ldots,\ell=\#\mathcal A$. We assume any two elements of $\mathcal A$ are either disjoint,
or intersect each other only at their stable sides.
Endow $\Sigma_{\ell}=\{1,\ldots,\ell\}^{\mathbb Z}$ with the product topology 
of the discrete topology,
and let $\sigma\colon\Sigma_{\ell}\circlearrowleft$ denote the left shift.
Define a coding map $\pi\colon\Sigma_{\ell}\to\mathbb R^2$ by
$\pi(\{x_i\}_{i\in\mathbb Z})=y$, where
$$\{y\}=\left(\bigcap_{k=1}^{\infty}\omega_k^s\right)
\cap\left(\bigcap_{k=1}^{\infty}\omega_k^u\right)$$ 
and
 $$\omega^s_k=\omega_{x_0}\cap\left(\bigcap_{i=1}^k 
f^{-\tau(\omega_{x_0})}\circ\cdots\circ f^{-\tau(\omega_{x_{i-1}})}\omega_{x_i}\right)\text{ and }
\omega^u_k=\bigcap_{i=1}^{k}f^{\tau(\omega_{x_{-1}})}\circ\cdots\circ f^{\tau(\omega_{x_{-i}})}\omega_{x_{-i}}.$$

\begin{lemma}\label{hyp}
The map $\pi$ is well-defined, continuous, injective, and satisfies $\pi(\Sigma_{\ell})\subset\Omega$.
\end{lemma}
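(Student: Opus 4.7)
The plan is to show that the nested families of sub-rectangles defining $\pi(\{x_i\})$ have unstable and stable diameters shrinking to zero, whence the intersection is a single transverse point; continuity, injectivity, and the orbit containment then follow quickly.

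First I would check, by induction on $k$, that each cylinder $\omega_k^s$ and $\omega_k^u$ is a non-empty sub-rectangle. Non-emptiness of every transition $f^{\tau(\omega)}\omega\cap\omega'$ for $\omega,\omega'\in\mathcal A$ is immediate from the inducing geometry recalled in Section~\ref{induced map} together with Lemma~\ref{nogu}: the rectangle $f^{\tau(\omega)}\omega$ has stable sides $\alpha_1^\pm$ (the full stable sides of $\Theta$) and unstable sides in $\tilde\Gamma^u$, so it crosses every vertical strip $\omega'\subset\mathrm{int}\,\Theta$ transversally.

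Next I would estimate diameters. The unstable sides of $\omega_k^s$ map under $f^{\tau(\omega_{x_0})+\cdots+\tau(\omega_{x_{k-1}})}$ onto sub-arcs of curves in $\tilde\Gamma^u$ of length at most one, so Lemma~\ref{global}(a) combined with $\tau(\omega)\geq 2$ yields unstable diameter $\leq Ce^{-\frac{2\lambda}{3}k}\to 0$; hence $\bigcap_k\omega_k^s$ is contained in a single curve nearly tangent to $W^s(P)$. The bounding curves of $\omega_k^u$ lie in $\tilde\Gamma^u$, and by the $C^1$-closure definition of $\Gamma^u$ together with the distortion bound Lemma~\ref{global}(b) propagated through the backward induction, they converge in $C^1$ to a single long unstable leaf $\gamma^u\in\Gamma^u$; thus $\bigcap_k\omega_k^u=\gamma^u\cap\omega_{x_0}$ is a single compact curve. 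Since the slopes of stable and unstable directions are separated ($\geq C/\sqrt b$ versus $\leq\sqrt b$), the two limit curves intersect in exactly one point, giving well-definedness.

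Continuity follows because $\{y_i\}$ close to $\{x_i\}$ in the product topology agrees on $|i|\leq N$, forcing $\pi(\{y_i\})\in\omega_N^s\cap\omega_N^u$, which has diameter $O(e^{-cN})$ by the preceding step. For injectivity, two sequences that first differ at some index $i$ yield cylinders ending in distinct rectangles of $\mathcal A$; these are either disjoint or meet only along shared stable sides, and a short argument using the distinct limit long unstable leaves of the two backward histories rules out coincidence on such a shared side. Finally, $\pi(\Sigma_\ell)\subset\Omega$ since every iterate $f^n\pi(\{x_i\})$ lies in $\bigcup\mathcal A\subset\Theta\subset R$, so the entire forward and backward orbit stays in $R$, placing the point in $\Omega$ by the characterization in Section~\ref{family}.

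The main obstacle will be the backward shrinkage of $\omega_k^u$: unlike the forward cylinders where Lemma~\ref{global}(a) applies directly, the $C^1$ convergence of the bounding $\tilde\Gamma^u$-curves to a single leaf in $\Gamma^u$ must be extracted from stable-direction contraction and distortion across variable inducing times, and this is where most of the technical work will sit.
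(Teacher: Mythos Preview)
Your outline is broadly the same as the paper's, but two points deserve attention.

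\textbf{Injectivity.} Your argument invokes ``distinct limit long unstable leaves of the two backward histories'' to rule out coincidence on a shared stable side. This fails when the two symbol sequences agree on all negative indices and first differ at some $i\geq 0$: then the backward histories coincide, so $\bigcap_k\omega_k^u$ is the \emph{same} long unstable leaf for both, and your argument gives nothing. The paper's fix is different and cleaner: if $\pi(x)=\pi(y)$ with $x\neq y$, then $\pi(x)$ must sit on a common stable side of two elements of $\mathcal A$; that side lies in $W^s(P)$, so $f^n\pi(x)$ never enters $\mathrm{int}\,\Theta$ for $n\geq 1$, contradicting the fact that the coding forces infinitely many returns to $\bigcup\mathcal A\subset\mathrm{int}\,\Theta$.

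\textbf{Shrinkage of $\omega_k^s$ and $\omega_k^u$.} For $\omega_k^s$ the paper does not stop at the unstable-diameter estimate you give; it pushes forward by a fixed iterate $F=f^{\tau(\omega_{x_0})+\cdots+\tau(\omega_{x_{k_0}})+1}$ and uses Lemma~\ref{behave} (available because $\#\mathcal A<\infty$ forces all elements away from the parabola, hence into some $G_m$) to put the stable sides of $F\omega_k^s$ inside genuine long stable leaves, then applies Lemma~\ref{leaf1}(b) to conclude convergence to a single long stable leaf $\gamma^s$. This gives the limit curve explicit geometry (vertical $C^2(b)$), which is what makes the transversal one-point intersection with the long unstable leaf rigorous. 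For $\omega_k^u$, the tool you propose (Lemma~\ref{global}(b)) controls distortion \emph{along} a single leaf, not the separation between the two bounding curves in $\tilde\Gamma^u$; the paper instead cites \cite[Lemma~2.2]{SenTak1} for exponential decay of the Hausdorff distance between those curves. You correctly flagged this as the main obstacle, but the distortion lemma alone will not close it.
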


\begin{proof}To show that $\left(\bigcap_{k=1}^{\infty}\omega_k^s\right)
\cap\left(\bigcap_{k=1}^{\infty}\omega_k^u\right)$ is a singleton and so
$\pi_{\mathcal A}$ is well-defined,
it suffices to show that both $\omega^s_k$ and
$\omega^u_k$ get thinner as $k$ increases, and converge to curves intersecting each other exactly at one point.
We argue as follows.


Since $\#\mathcal A$ is finite, the elements of $\mathcal A$ do not accumulate the parabola $f^{-1}\alpha_0^+\cap R$.
By Lemma \ref{behave} there exists $k_0\geq1$ such that 
for each $k\geq k_0$, the stable sides of $F\omega^s_{k}$
are contained in long stable leaves, where $F=f^{\tau(\omega_{x_0})+\cdots+\tau(\omega_{x_{k_0}})+1}$.
By the exponential decrease of the lengths of the unstable sides of this rectangle in $k$, and by
 Lemma \ref{leaf1}(b), these long stable leaves converge as $k\to\infty$ to a single long stable leaf, denoted by $\gamma^s$. It follows that 
$\bigcap_{k=1}^\infty \omega^s_k$ is a curve contained in $F^{-1}\gamma^s$,
joining the two unstable sides of $R$.

The unstable sides of $\omega_k^u$ belong to $\tilde\Gamma^u$.
By \cite[Lemma 2.2]{SenTak1}, the Hausdorff distance between them decreases exponentially in $k$. 
This implies
$\bigcap_{k=1}^{\infty}\omega_k^u\in\Gamma^u$. Hence 
$\left(\bigcap_{k=1}^{\infty}\omega_k^s\right)
\cap\left(\bigcap_{k=1}^{\infty}\omega_k^u\right)\neq\emptyset$ holds.

We have
$$F\left(\left(\bigcap_{k=1}^{\infty}\omega_k^s\right)\cap\left( \bigcap_{k=1}^{\infty}\omega_k^u\right)\right)\subset
F\left(\bigcap_{k=1}^{\infty}\omega_k^s\right)\cap
F\left(\omega_{k_0}^s\cap\bigcap_{k=1}^{\infty}\omega_k^u\right).$$
The first set of the right-hand-side is a subset of $\gamma^s$ and the second is in $\Gamma^u$.
Hence, the set of the left-hand-side is a singleton.
Since $F$ is a diffeomorphism, 
$\left(\bigcap_{k=1}^{\infty}\omega_k^s\right)
\cap\left(\bigcap_{k=1}^{\infty}\omega_k^u\right)$ is a singleton.

 

Since all points outside of $R$ diverges to infinity under positive or negative iteration,
we have $y\in\bigcap_{n\in\mathbb Z}f^nR$, and so $y\in\Omega$ from the first property of the rectangle $R$ in Sect.\ref{family}.
In addition, the above argument shows the continuity of $\pi$.

To show the injectivity, assume $x,y\in\Sigma_{\ell}$, $x\neq y$ and $\pi(x)= \pi(y)$.
Then $\pi(x)$ is contained in the stable side of two neighboring elements of $\mathcal A$.
Hence $f^n \pi(x)$ is not contained in the interior of $\Theta$ for every $n\geq1$, a contradiction.
\end{proof}



\subsection{Bounded distortion}\label{bdd}
We establish distortion bounds
for proper rectangles.
\begin{lemma}\label{lyapbdd}
For every $m\geq0$ there exists a constant $D_m>0$ such that for
any proper rectangle $\omega$ intersecting $G_m$ and $\tau(\omega)>m$,
$$\sup_{x,y\in\Omega\cap\omega}
 \frac{\|D_yf^{\tau(\omega)}|E_y^u\|}{   \|D_xf^{\tau(\omega)}|E_x^u\|  }  \leq D_m.$$
\end{lemma}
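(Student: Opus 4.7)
The plan is to reduce to the two distortion results already established---Lemma \ref{global}(b) along long unstable leaves, and Lemma \ref{leaf1}(a) along long stable leaves---by inserting intermediate points on a stable side of $\omega$, and to absorb a short initial tail of at most $m$ iterations into the constant $D_m$.

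By Lemma \ref{behave}, fix $k\in[0,m]$ such that each stable side of $f^k\omega$ is a long stable leaf $\gamma^s(f^kw)$ through a controlled point $w$ provided by Lemma \ref{sr}. Let $\partial^s\omega$ denote a chosen stable side of $\omega$, write $\gamma^s_L:=\gamma^s(f^kw)\supset f^k\partial^s\omega$ and $N:=\tau(\omega)-k$. For $x,y\in\Omega\cap\omega\subset\mathcal W^u$, take the long unstable leaves $\gamma^u_x\ni x$ and $\gamma^u_y\ni y$; by Lemma \ref{global} each meets $\omega$ in a curve joining its two stable sides, so the intersections $\xi:=\gamma^u_x\cap\partial^s\omega$ and $\eta:=\gamma^u_y\cap\partial^s\omega$ are well-defined points of $\Omega$, with $f^k\xi,f^k\eta\in\gamma^s_L\cap\Omega$. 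Telescoping,
\[\frac{\|D_yf^{\tau(\omega)}|E_y^u\|}{\|D_xf^{\tau(\omega)}|E_x^u\|}=\frac{\|D_yf^{\tau(\omega)}|E_y^u\|}{\|D_\eta f^{\tau(\omega)}|E_\eta^u\|}\cdot\frac{\|D_\eta f^k|E_\eta^u\|}{\|D_\xi f^k|E_\xi^u\|}\cdot\frac{\|D_{f^k\eta}f^N|E_{f^k\eta}^u\|}{\|D_{f^k\xi}f^N|E_{f^k\xi}^u\|}\cdot\frac{\|D_\xi f^{\tau(\omega)}|E_\xi^u\|}{\|D_xf^{\tau(\omega)}|E_x^u\|},\]
the first and fourth factors are bounded by a universal constant via Lemma \ref{global}(b) (together with the boundedness of the diameter of $f^{\tau(\omega)}\omega\subset\Theta$), and the third factor is at most $2$ by Lemma \ref{leaf1}(a) applied on $\gamma^s_L\cap\Omega$.

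It remains to bound the second factor---the ratio over the first $k\le m$ iterations between $\xi$ and $\eta$, both lying on the common piece $\partial^s\omega\subset W^s(P)$. Since $\xi,\eta$ are exponentially close along $W^s(P)$, the orbits $f^i\xi,f^i\eta$ stay close and share comparable distances to the critical set for $0\le i\le k-1$; bounding $|\log J^u(f^i\eta)-\log J^u(f^i\xi)|$ iterate by iterate---via the Lipschitz regularity of $\log J^u$ outside $I(\delta)$ and the quadratic behavior of $J^u$ near critical points (cf.\ the proof of Lemma \ref{binding}) inside $I(\delta)$---and summing the at most $m$ terms produces a constant depending only on $m$. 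This last step is the main obstacle, because no a priori lower bound on $\|D_\xi f^k|E_\xi^u\|$ is available (the orbit of $\xi$ may plunge deeply into $I(\delta)$ before time $k$); the remedy is that $\xi$ and $\eta$ shadow each other along a common local stable manifold, so their individual unstable Jacobians, while possibly small, maintain a bounded ratio.
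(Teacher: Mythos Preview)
Your proposal is correct and follows essentially the same approach as the paper: telescope through points $\xi,\eta$ on a stable side of $\omega$, bound the pairs $(x,\xi)$ and $(y,\eta)$ along their common long unstable leaves via Lemma~\ref{global}(b), invoke Lemma~\ref{behave} to produce $k\in[0,m]$ with $f^k\partial^s\omega$ in a long stable leaf, apply Lemma~\ref{leaf1}(a) for the last $\tau(\omega)-k$ iterates, and absorb the first $k\le m$ iterates into a constant depending only on $m$. The only difference is cosmetic (you split into four factors where the paper uses three and then subdivides the middle one), and you spell out the justification for the $k$-step bound that the paper simply asserts.
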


\begin{proof}
Let $x,y\in\Omega\cap\omega$.
By the last remark on long unstable leaves in Sect.\ref{stablegeo}, 
$\Omega\cap\omega\subset\mathcal W^u$.
Take a stable side of $\omega$ and denote it by $\partial^s\omega$.
Take $x'\in\partial^s\omega$ (resp. $y'\in\partial^s\omega$) such that $x$ and $x'$ (resp. $y$ and $y'$) lie on the same long unstable leaf.
The Chain Rule gives
\begin{equation*}
\frac{\|D_yf^{\tau(\omega)}|E_y^u\|}{   \|D_xf^{\tau(\omega)}|E_x^u\|  }=
\frac{\|D_{x'}f^{\tau(\omega)}|E_{x'}^u\|}{   \|D_xf^{\tau(\omega)}|E_x^u\|  }\cdot\frac{\|D_{y'}f^{\tau(\omega)}|E_{y'}^u\|}{   \|D_{x'}f^{\tau(\omega)}|E_{x'}^u\|  }\cdot
\frac{\|D_yf^{\tau(\omega)}|E_y^u\|}{   \|D_{y'}f^{\tau(\omega)}|E_{y'}^u\|  }.\end{equation*}
 Lemma \ref{global}(b) bounds the first and the third factors.
For the second one,
by Lemma \ref{behave} there exists $k\in[0,m]$ such that
 $f^k\partial^s\omega$ is contained in a long stable leaf. 
 Then
 \begin{equation*}\frac{\|D_{y'}f^{\tau(\omega)}|E_{y'}^u\|}{   \|D_{x'}f^{\tau(\omega)}|E_{x'}^u\|  }  \leq 
\frac{\|D_{y'}f^{k}|E_{y'}^u\|}{   \|D_{x'}f^{k}|E_{x'}^u\|  }  +
\frac{\|D_{f^ky'}f^{\tau(\omega)-k}|E_{f^ky'}^u\|}{   \|D_{f^kx'}f^{\tau(\omega)-k}|E_{f^kx'}^u\|  } .\end{equation*}
The first term of the right-hand-side is bounded by
 a uniform constant which depends only on $m$ and $f$.
The second one is bounded by Lemma \ref{leaf1}(a).
\end{proof}


\subsection{Approximation of ergodic measures with horseshoes}
Katok established the remarkable result that every hyperbolic measures of differomorphisms
can be in a particular sense approximated by uniformly hyperbolic horseshoes
(See \cite[Theorem S.5.9]{KatHas95} for the precise statement). 
We will need a version of this.
Let $\mathcal M^e(f)$ denote the set of $f$-invariant ergodic Borel probability measures.
\begin{lemma}\label{katok}
Let $\mu\in\mathcal M^e(f)$ satisfy $h(\mu)>0$.
For any $\varepsilon>0$ there exist $q>0$ and a finite collection 
$\mathcal R$ of
proper rectangles such that:

\begin{itemize}
\item[(a)] for each $\omega\in\mathcal R$,
$\tau(\omega)=q$;

\item[(b)] $\left|(1/q)\log \#\mathcal R-h(\mu)\right|<\varepsilon;$

\item[(c)] for any $x\in \bigcup_{\omega\in\mathcal R}\mathcal W^u\cap \omega$, 
$\left|(1/q)\sum_{i=0}^{q-1}\log J^u(f^ix)-\lambda^u(\mu)\right|<\varepsilon$.

\end{itemize}

\end{lemma}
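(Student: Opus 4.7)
The plan is to combine Katok's classical horseshoe approximation with the inducing scheme of Sect.~\ref{induced map}, so that the resulting horseshoe is already packaged as proper rectangles sharing a common inducing time. Since $\lambda^u(\mu) \geq \lambda_m^u > 0$, the measure $\mu$ is hyperbolic, and Katok's theorem yields, for any $\varepsilon' > 0$ and all sufficiently large $N$, an $(N,\rho)$-separated set $E_N \subset \Omega$ of cardinality at least $e^{N(h(\mu)-\varepsilon')}$ whose points are Birkhoff-regular for $\log J^u$. By Corollary \ref{G0}, $\mu(\bigcup_m G_m) = 1$; fixing $m$ with $\mu(G_m)$ sufficiently close to $1$ and restricting $E_N$ further, one may moreover assume that every $x \in E_N$ lies in $G_m$ and is Birkhoff-generic for $\mathbf{1}_\Theta$ (noting that $\mu(\Theta)>0$ follows from $\mu(\Omega_*)=0$, since an orbit that never returned to $\Theta$ would be trapped in a neighborhood of $\{P,Q\}$ where the dynamics is uniformly hyperbolic).

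To each $x \in E_N$ one attaches its inducing itinerary up to time $N$: the unique $k(x)$ for which $x \in \omega(x) \in \mathcal P_{k(x)}$ and $\tau(\omega(x)) \leq N < \tau(\omega(x)) + r(\omega_{k(x)})$. Kac's formula together with Birkhoff-genericity for $\mathbf{1}_\Theta$ gives $\tau(\omega(x)) \in [N-\Delta, N]$ for a constant $\Delta$ independent of $x$. Pigeonholing on the integer $q = \tau(\omega(x))$ yields a sub-collection of $E_N$ of cardinality at least $(\Delta+1)^{-1}|E_N|$ sharing a common $q$. Setting $\mathcal R = \{\omega(x) : x \text{ in this subcollection}\}$ then gives (a). The Katok separation $\rho$ is chosen smaller than the diameter of any depth-$k(x)$ rectangle, which is exponentially small in $k(x)$ by Lemma \ref{globals}(b) together with Lemma \ref{leaf1}(b), so distinct $x$ produce distinct $\omega(x)$; this yields the lower bound in (b). The matching upper bound follows from Shannon--McMillan--Breiman applied to the induced first-return system on $\Theta$ and the fact that $\mathcal P_1$ is generating for it.

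Each $\omega \in \mathcal R$ intersects $G_m$ and has $\tau(\omega) = q > m$, so Lemma \ref{lyapbdd} transports the Birkhoff estimate for $\log J^u$ from the chosen $x \in E_N \cap \omega$ to every $y \in \Omega \cap \omega$ at the cost of an additive error $q^{-1} \log D_m$, which is absorbed by taking $N$ large; continuity of $E^u$ on $\mathcal W^u$ then upgrades this to all of $\mathcal W^u \cap \omega$, yielding (c). The main difficulty will be simultaneously arranging a common inducing time and a sharp entropy count: the pigeonhole on $q$ is cheap only because one has pointwise control of the rate of returns to $\Theta$, which forces the preliminary selection of orbits satisfying all three ``good'' conditions (Birkhoff-regular for $\log J^u$, Birkhoff-generic for $\mathbf{1}_\Theta$, and contained in $G_m$), and one must verify that these conditions are mutually consistent with the Katok cardinality lower bound, i.e.\ that their joint measure still approaches $1$.
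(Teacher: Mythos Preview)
Your outline has the right architecture (Birkhoff/SMB regularity, membership in $G_m$, pigeonhole on a return time, and Lemma~\ref{lyapbdd} for part (c)), but the counting step for the lower bound in (b) has a genuine gap.

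You claim that $(N,\rho)$-separated points produce \emph{distinct} proper rectangles because $\rho$ can be taken smaller than the diameter of a depth-$k(x)$ rectangle. This is backwards, and in fact no choice of $\rho$ makes the implication work. If $x,y$ lie in the same $\omega\in\mathcal P_k$ with $\tau(\omega)=q\le N$, the images $f^i\omega$ for $i$ close to $q$ already have unstable-direction diameter of order $1$ (since $f^q\omega$ stretches across $\Theta$), so $x$ and $y$ can easily be $(N,\rho)$-separated while sharing the same $\omega(x)=\omega(y)$. In other words, orbit-separation up to time $N$ does not see the inducing partition at depth $k(x)$; it sees the much finer dynamical partition $\bigvee_{i=0}^{N-1}f^{-i}(\text{something})$. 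Many separated points can collapse onto a single proper rectangle, and you lose the cardinality lower bound.

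A secondary issue: Kac plus Birkhoff for $\mathbf 1_\Theta$ does \emph{not} give $\tau(\omega(x))\in[N-\Delta,N]$ with $\Delta$ a fixed constant, because individual return times to $\Theta$ are unbounded. You only get a return somewhere in a window $[(1-\eta)N,N]$, so the pigeonhole loses a factor $O(N)$ rather than $O(1)$; this is harmless for the exponential count, but should be stated correctly.

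The paper avoids both problems by first replacing $\mu$ with an ergodic $\nu$ supported on a \emph{uniformly hyperbolic} set (Katok's horseshoe theorem in the form \cite[Theorem~S.5.9]{KatHas95}). On such a support the relevant partition $\mathcal Q(\nu)=\{\omega\in\mathcal P_1:\nu(\omega)>0\}\cup\{\omega_{\rm S},\omega_{\rm R}\}$ is \emph{finite} and generating, so SMB bounds $\nu(\omega)\le e^{-q(h(\nu)-\varepsilon/3)}$ for each candidate rectangle $\omega$. The lower bound on $\#\mathcal R$ then comes from \emph{disjointness plus measure}, namely $\#\mathcal R\ge \nu(\Lambda_{m,p,q})\, e^{q(h(\nu)-\varepsilon/3)}$, never invoking separated sets at all. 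If you want to salvage your route, you should likewise pass to a hyperbolic approximant first (so that return times are bounded and the partition is finite) and replace the separated-set count by an SMB measure count.
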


\begin{proof}
By \cite[Theorem S.5.9]{KatHas95}, for any $\varepsilon\in(0,2h(\mu))$ there exists $\nu\in\mathcal M^e(f)$ which is supported on a hyperbolic set and satisfies
$|h(\mu)-h(\nu)|<\varepsilon/2$, 
$|\lambda^u(\mu)-\lambda^u(\nu)|<\varepsilon/3$.
We have $\nu(\Theta)>0$, for otherwise $\nu$ the Dirac measure at $Q$, in contradiction to 
$h(\nu)>0$.

Let $\omega_{\rm S}$ (resp. $\omega_{\rm R}$) denote the connected component of $R\setminus\Theta$ 
at the left (resp. right) of $\zeta_0$, and define
$$\mathcal Q(\nu)=\{\omega\in\mathcal P_1\colon\nu(\omega)>0\}\bigcup\{\omega_{\rm S},\omega_{\rm R}\}.$$
Since $\nu$ is supported on a hyperbolic set, $\#\mathcal Q(\nu)$ is finite. 
We claim that $\mathcal Q(\nu)$ is a generating partition with respect to $\nu$.
Indeed, by \cite[Lemma 3.1]{SenTak1}, there is a continuous surjection
$\iota$ from $\Sigma_2$ to $\Omega$ which gives a symbolic coding of points in $\Omega$.
Since the coding is given by the two rectangles intersecting only at $\zeta_0$,
for any cylinder set $A$ in $\Sigma_2$, $\iota(A)\cap\bigcup\{\omega\colon\omega\in\mathcal Q(\nu)\}$ belongs to the sigma-algebra generated by 
$\bigcup_{n=0}^\infty\bigvee_{i=-n}^n f^{-i}\mathcal Q(\nu)$.
Since cylinder sets form a base of the topology of $\Sigma_2$,
the claim holds.

For $m>0$ let $\Lambda_m$ denote the set of all $x\in\Theta$ for which the following holds:
\begin{itemize}
\item[(i)] $|(1/n)\log\nu(\omega(x))+h(\nu)|<\varepsilon/3$
for every $n\geq m$,
where $\omega(x)$ denotes the element of 
$\bigvee_{i=0}^{n-1}f^{-i}\mathcal Q(\nu)$ containing $x$;

\item[(ii)]  $\left|(1/n)\sum_{i=0}^{n-1}\log J^u (f^ix)-\lambda^u(\nu)\right|\leq\varepsilon/3$
for every $n\geq m$;

\item[(iii)]  $x\in G_m$.
\end{itemize}

\noindent By the Shannon-McMillan-Breimann Theorem,
the Ergodic Theorem and Corollary \ref{G0}, 
$\nu(\Lambda_m)\to\nu(\Theta)$ as $m\to\infty$.
Let 
$$\Lambda_{m,p}=\{x\in\Lambda_m\colon 
f^qx\in\Theta\text{ for some }q\in[p,2p]\}.$$
We claim $\nu(\Lambda_{m,p})\to\nu(\Lambda_m)$ as $p\to\infty$.
To show this, denote by $\chi_\Theta$  the characteristic function of $\Theta$.
Set
$$B_p=\left\{x\in\Lambda_m\colon\frac{1}{p}\sum_{i=0}^{p-1}\chi_\Theta(f^ix)<\frac{5}{4}\nu(\Theta)\text{ and }\frac{1}{2p}\sum_{i=0}^{2p-1}
\chi_\Theta(f^ix)>\frac{5}{8}\nu(\Theta)\right\}.$$
From the Ergodic Theorem,
 $\nu(B_p)\to\nu(\Lambda_m)$ as $p\to\infty$.
 Since $B_p\subset\Lambda_{m,p}$ the claim holds.

Choose $m>0$ such that 
$\nu(\Lambda_m)\geq(1/2)\nu(\Theta)$,
and then choose $p\geq m$ such that
$\nu(\Lambda_{m,p})\geq(1/3)\nu(\Theta)$,
$-(1/p)\log (6p)+(1/p)\log\nu(\Theta)>-\varepsilon/6$ and
$D_m/p<\varepsilon/3$,
where $D_m$ is the constant in Lemma  \ref{lyapbdd}.
For each $q\in[p,2p]$ set $$\Lambda_{m,p,q}=\{x\in \Lambda_{m,p}\colon \min\{n\in[p,2p]\colon f^nx\in\Theta\}=q\}.$$
Choose $q$ such that 
$\nu(\Lambda_{m,p,q})\geq (1/2p)\nu(\Lambda_{m,p})$. 
Define $\mathcal R$ to be the collection of proper rectangles intersecting $\Lambda_{m,p,q}$ 
with inducing time $q$.
Lemma \ref{katok}(a) is immediate from the construction.

Note that elements of $\mathcal R$ are mutually disjoint, altogether cover $\Lambda_{m,p,q}$ and
belong to $\bigvee_{i=0}^{q-1}f^{-i}\mathcal Q(\nu)$.
(i) gives
$\nu(\omega)\leq e^{-q\left(h(\nu)-\frac{\varepsilon}{3}\right)}$ for each $\omega\in\mathcal R$.
Hence $$\#\mathcal R\geq \nu(\Lambda_{m,p,q})e^{q\left((h(\nu)-\frac{\varepsilon}{3}\right)}
\geq\frac{1}{6p}\nu(\Theta)e^{q\left((h(\nu)-\frac{\varepsilon}{3}\right)},$$ and therefore
$$\frac{1}{q}\log\#\mathcal R\geq -\frac{1}{q}\log (6p)+\frac{1}{q}\log\nu(\Theta)+h(\nu)-\frac{\varepsilon}{3}>h(\nu)-\frac{\varepsilon}{2}>h(\mu)-\varepsilon.$$
Similarly we obtain $(1/q)\log\#\mathcal R\leq h(\nu)+\varepsilon/3$.
This proves Lemma \ref{katok}(b).

For each $\omega\in\mathcal R$ choose $x_\omega\in\omega\cap\Lambda_{m,p,q}$ such that
$\left|(1/q)\sum_{i=0}^{q-1}\log J^u(f^ix_\omega)-\lambda^u(\nu)\right|<\varepsilon/3$.
For all $x\in\mathcal W^u\cap\omega$,
\begin{align*}\left|\frac{1}{q}\sum_{i=0}^{q-1}\log J^u(f^ix)-\lambda^u(\mu)\right|\leq&
\left|\frac{1}{q}\sum_{i=0}^{q-1}\log J^u (f^ix)- \frac{1}{q}\sum_{i=0}^{q-1}\log J^u(f^ix_\omega) \right|\\
&+
\left|\frac{1}{q}\sum_{i=0}^{q-1}\log J^u(f^ix_\omega)-\lambda^u(\nu)\right|
+\left|\lambda^u(\nu)-      \lambda^u(\mu)    \right|\\
\leq& \frac{\log D_m}{q}+\frac{\varepsilon}{3}+\frac{\varepsilon}{3}\leq \frac{\log D_m}{p}+\frac{2\varepsilon}{3} <\varepsilon,\end{align*}
where the first term of the right-hand-side of the first inequality is bounded by Lemma \ref{lyapbdd}
and $x_\omega\in G_m$. Hence Lemma \ref{katok}(c) holds.
\end{proof}

\subsection{Construction of a subset of the level set}
The next lemma will be used to construct a subset of each level set with large dimension. 
\begin{prop}\label{lowd} 
Let $\beta\in I$, and
let $\{\mu_n\}_{n=1}^\infty$ be a sequence in $\mathcal M^e(f)$ such that
$h(\mu_n)>0$ and
$\lambda^u(\mu_n)\to\beta$ as $n\to\infty$.
There exists a closed set
$Z\subset \Omega^u(\beta)$ such that
\begin{equation*}
\dim_H^u(Z)\geq \limsup_{n\to\infty}\frac{h(\mu_n)}{\lambda^u(\mu_n)}.\end{equation*}
\end{prop}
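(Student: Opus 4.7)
The plan is to build $Z$ as a Moran-like Cantor set patched together from the hyperbolic horseshoes supplied by Lemma~\ref{katok} at a hierarchy of scales tailored to the sequence $\{\mu_n\}$. First I would pass to a subsequence (still denoted $\{\mu_n\}$) along which $h(\mu_n)/\lambda^u(\mu_n)$ converges to $d := \limsup_n h(\mu_n)/\lambda^u(\mu_n)$ while $\lambda^u(\mu_n) \to \beta$. Fix $\varepsilon_n \downarrow 0$ and apply Lemma~\ref{katok} to obtain $q_n>0$ and a finite collection $\mathcal R_n$ of proper rectangles with common inducing time $q_n$, satisfying $\#\mathcal R_n \geq e^{q_n (h(\mu_n)-\varepsilon_n)}$ and such that $(1/q_n)\sum_{i=0}^{q_n-1} \log J^u \circ f^i$ is $\varepsilon_n$-close to $\lambda^u(\mu_n)$ uniformly on $\mathcal W^u\cap\omega$ for every $\omega \in \mathcal R_n$.

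Next I would pick a rapidly growing sequence $N_1 \ll N_2 \ll \cdots$ with $N_k q_k \geq k\sum_{i<k} N_i q_i$, and concatenate rectangles under the first-return structure of Sect.\ref{induced map}: each choice $(\omega_{i,j})$ with $i\leq k$, $j\leq N_i$ and $\omega_{i,j}\in\mathcal R_i$ determines a proper rectangle of inducing time $T_k := \sum_{i=1}^k N_i q_i$. Let $\mathcal T_k$ be the resulting collection, and set $Z_k := \bigcup_{\omega\in\mathcal T_k}(\mathcal W^u \cap \omega)$ and $Z := \bigcap_k Z_k$. The unstable sides of members of $\mathcal T_k$ shrink exponentially by Lemma~\ref{globals}(b), and the stable sides approximate long stable leaves via Lemma~\ref{behave} and Lemma~\ref{leaf1}(b), so the nested-intersection argument of Lemma~\ref{hyp} identifies $Z$ as a nonempty closed subset of $\mathcal W^u \subset \Omega$.

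To verify $Z \subset \Omega^u(\beta)$, observe that for any $x\in Z$ the forward orbit visits $\Theta$ at every time $T_{k-1} + j q_k$, and on each block of length $q_k$ the Birkhoff sum of $\log J^u$ is within $q_k\varepsilon_k$ of $q_k\lambda^u(\mu_k)$ by Lemma~\ref{katok}(c). Since $N_k q_k$ dominates $T_{k-1}$ and $\lambda^u(\mu_k)\to\beta$, the averages along the subsequence $\{T_k\}$ converge to $\beta$. Intermediate times are handled by the bound/free decomposition of Sect.\ref{bf}: Lemma~\ref{globals}(c) bounds $d_{\mathrm{crit}}(f^nx)$ from below by $e^{-10T_k}$ inside each block, so Birkhoff sums at non-return times differ from the nearest block-endpoint sum by an error of order $\log T_k$, negligible after division by $n$. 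Thus $\lambda^u(x)=\beta$ for every $x\in Z$.

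The dimension lower bound would then follow from the mass-distribution principle. Put weight $(\prod_{i=1}^k (\#\mathcal R_i)^{N_i})^{-1}$ on each $\omega\in\mathcal T_k$; this defines a Borel probability measure $\nu$ on $Z$. By Lemma~\ref{global}(a), Lemma~\ref{globals}(b) and the bounded-distortion Lemma~\ref{lyapbdd},
\[
\mathrm{length}(\partial^u\omega) \asymp \exp\!\Bigl(-\sum_{i=1}^k N_i q_i\bigl(\lambda^u(\mu_i)+O(\varepsilon_i)\bigr)\Bigr),
\]
so the local $\nu$-dimension at every point of $Z$ is bounded below by
\[
\frac{\sum_{i=1}^k N_i q_i(h(\mu_i)-\varepsilon_i)}{\sum_{i=1}^k N_i q_i(\lambda^u(\mu_i)+\varepsilon_i)},
\]
a ratio dominated by the $k$-th term and tending to $d$. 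The hard part, I expect, will be converting this local-dimension estimate into an honest lower bound for $\dim_H^u(Z)$: one must argue that any efficient covering by short arcs of $W^u$ can be refined to one using only unstable sides of rectangles in $\bigcup_k\mathcal T_k$, which needs bounded distortion across the widely varying scales $q_1\ll q_2\ll\cdots$ together with the long-stable-leaf approximation of Lemma~\ref{leaf1}(b) to ensure the concatenated rectangles remain genuinely proper in the sense of Sect.\ref{induced map}.
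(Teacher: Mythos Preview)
Your strategy---concatenate the Katok horseshoes $\mathcal R_n$ with rapidly growing repetition counts $N_n$, then apply the mass-distribution principle---is exactly what the paper does. Two points need adjustment.

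First, you take $Z\subset\mathcal W^u$, but $\mathcal W^u$ is the union of \emph{limits} of curves in $W^u$ and need not itself lie in $W^u$; since $\Omega^u=\Omega\cap W^u$ and $\dim_H^u$ is only defined on $W^u$, this is not enough. The paper fixes this by intersecting with the single unstable side $\gamma^u(\zeta_0)\subset W^u$ of $\Theta$: set $Z=\gamma^u(\zeta_0)\cap\bigcap_k\bigcup_{\omega\in\mathcal T_k}\omega$. All the length and Birkhoff-sum estimates you need already hold on $\gamma^u(\zeta_0)\cap\omega$ by Lemma~\ref{katok}(c), so nothing else in your argument changes.

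Second, the ``hard part'' you anticipate is not there. Once you establish
\[
\liminf_{\varepsilon\to 0}\frac{\log\nu D_\varepsilon(x)}{\log\varepsilon}\ \geq\ d \quad\text{for every }x\in Z,
\]
the inequality $\dim_H^u(Z)\geq d$ follows immediately from \cite[Lemma~2.1]{You82}; no covering refinement is required. The genuine work sits one step earlier: to bound $\nu D_\varepsilon(x)$ you must show that $D_\varepsilon(x)$ meets at most a uniformly bounded number of elements of $\mathcal T_{k(\varepsilon)}$ for the correct $k(\varepsilon)$. The paper does this by introducing a monotone scale sequence $a_{n,s}$, choosing $k(\varepsilon)$ so that $\varepsilon$ lies between consecutive scales, and using the length estimate (your displayed asymptotic for $\mathrm{length}(\partial^u\omega)$) to get a bound of $4$. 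Your $N_k$ must be chosen large enough to make these scales monotone, which is where the ``rapidly growing'' condition is actually used.

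A minor correction on intermediate times: the error incurred on a partial block of length $<q_N$ is not $O(\log T_k)$ but $O(q_N)$, controlled simply by $|\log J^u|\leq\log 5$. It is absorbed because $N_{N-1}q_{N-1}$ is chosen large relative to $q_N$; the $d_{\mathrm{crit}}$ bound from Lemma~\ref{globals}(c) plays no role here.
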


 \begin{proof}
 Taking a subsequence if necessary we may assume $|\lambda^u(\mu_n)-\beta|<1/n$
 and $h(\mu_n)/\lambda^u(\mu_n)$ converges.
  We approximate each $\mu_n$ with a horseshoe in the sense of Lemma \ref{katok}, and 
 then construct a set of points which wander around these horseshoes,
 in such a way that their unstable Lyapunov exponents converge to $\beta$.
This is done along the line of \cite{Chu10}.

  By Lemma \ref{katok}, for each $n$
there exist $q_n>0$ and a family $\mathcal R_n$ of proper rectangles
such that
$\tau(\omega)=q_n$  for each $\omega\in\mathcal R_n$
and

\begin{equation}\label{00}
\frac{1}{q_n}\log \#\mathcal R_n\geq h(\mu_n)-\frac{1}{n};\end{equation}

\begin{equation}\label{11}
\sup\left\{\left|\frac{1}{q_n}\sum_{j=0}^{q_n-1}\log J^u(f^jx)-\lambda^u(\mu_n)\right|\colon\
x\in \bigcup_{\omega\in\mathcal R_n} \mathcal W^u\cap\omega\right\}<\frac{1}{n}.
\end{equation}

For an integer $\kappa\geq1$ let
$$\mathcal R_n(\kappa)=\{\omega_0\cap f^{-q_n}\omega_1\cap\cdots\cap f^{-(\kappa-1)q_n}\omega_{\kappa-1}
\colon\omega_1,\ldots,\omega_{\kappa-1}\in\mathcal R_n\}.$$
Elements of $\mathcal R_{n}(\kappa)$ are proper rectangles with inducing time $\kappa q_n$,
and $\#\mathcal R_{n}(\kappa)=(\#\mathcal R_n)^{\kappa}$ holds.

Let $\{\kappa_n\}_{n=1}^\infty$ be a sequence of positive integers.
For each $k\geq1$ let $(N,s)=(N(k),s(k))$ be a pair of integers such that
$$
k=\kappa_1+\kappa_2+\cdots+\kappa_{N-1}+s \ \text{and}\ \ 0\leq s< \kappa_{N}.$$
Define $\mathcal S(k)$ to be the collection of proper rectangles of the form
$$\omega_0\cap f^{-\kappa_1q_1}\omega_1\cap
\cdots\cap f^{-\kappa_1q_1-\cdots-\kappa_{N-1}q_{N-1}}\omega_{N},$$
where $\omega_n\in\mathcal R_{n}(\kappa_{n+1})$ $(n=0,\ldots,N-1)$ and $\omega_{N}\in\mathcal R_{N}(s).$
Elements of $\mathcal S(k)$ are proper rectangles with inducing time $\kappa_1q_1+\cdots+\kappa_{N-1}q_{N-1}+sq_N$.
The set $\bigcup_{\omega\in\mathcal S(k)}\omega$ is compact, and
decreasing in $k$.

Let $\gamma^u(\zeta_0)$ denote the unstable side of $\Theta$ containing $\zeta_0$.
Set
$$Z=\gamma^u(\zeta_0)\cap \bigcap_{k=1}^\infty\bigcup_{\omega\in\mathcal S(k)}\omega.$$
We show $Z\subset\Omega^u(\beta)$. 
Let $x\in Z$.
For each large integer $M\geq\kappa_1q_1$, choose $(N,s)$ such that
$0\leq s<\kappa_{N}$ and
 $0\leq M-(\kappa_1q_1+\cdots+\kappa_{N-1}q_{N-1}+sq_{N})<q_{N}$.
 The triangle inequality gives
 $$\left|\sum_{j=0}^{M-1}\log J^u(f^jx)-M\beta\right|\leq I+I\!I+I\!I\!I+I\!V,$$
 where
 \begin{align*}
 I&=\sum_{j=0}^{\kappa_1-1}\left|\sum_{l=0}^{q_1-1}\log J^u(f^{q_1j+l}x)-q_{1}\beta\right|;\\
 I\!I&=\sum_{n=1}^{N-1}\sum_{j=0}^{\kappa_{n}-1}\left|\sum_{l=0}^{q_{n}-1}\log J^u(f^{\kappa_1q_1+\cdots+\kappa_{n-1}q_{n-1}+jq_{n}+l}x)-q_{n}\beta\right|;\\
 I\!I\!I&=\sum_{j=0}^{s-1}\left|\sum_{l=0}^{q_N-1}\log J^u(f^{\kappa_1q_1+\cdots+\kappa_{N-1}q_{N-1}+jq_{N}+l}x)-q_{N}\beta\right|;\\
 I\!V&=\left|\sum_{l=0}^{M-(\kappa_1q_1+\cdots+\kappa_{N-1}q_{N-1}+sq_{N})-1}\log J^u(f^{\kappa_1q_1+\cdots+\kappa_{N-1}q_{N-1}+sq_{N}+l}x)-
(M-(\kappa_1q_1+\cdots+\kappa_{N-1}q_{N-1}+sq_{N}))\beta\right|.
 \end{align*}
Using \eqref{11},
$$\left|\sum_{l=0}^{q_1-1}\log J^u(f^{jq_1+l}x)-q_1\beta\right|
\leq\left|\sum_{l=0}^{q_1-1}\log J^u(f^{jq_1+l}x)-q_{1}\lambda^u(\mu_1)\right|+
\left|q_1\lambda^u(\mu_1)-q_{1}\beta\right|\leq 2q_1,$$
and similarly
\begin{align*}
\left|\sum_{l=0}^{q_{n}-1}\log J^u(f^{\kappa_1q_1+\cdots+\kappa_{n-1}q_{n-1}+jq_{n}+l}x)-q_{n}\beta\right|\leq\frac{2q_{n}}{n}.
\end{align*}
Summing these and other reminder terms we get
\begin{align*}
\left|\sum_{j=0}^{M-1}\log J^u(f^jx)-M\beta\right|&\leq\sum_{n=1}^{N-1}
\frac{2q_{n}
\kappa_{n}}{n}+\frac{2q_{N}s}{N}+(M-(\kappa_1q_1+\cdots+\kappa_{N-1}q_{N-1}+sq_{N}  ))
(\log 5-\beta)\\
&\leq \frac{3q_{N-1}
\kappa_{N-1}}{N}+\frac{2q_{N}s}{N}+q_N(\log 5-\beta)\leq\frac{4M}{N},
\end{align*}
where the second and the last inequalities hold provided
$\kappa_{N-1}$ is sufficiently large compared to 
$q_1,q_2,\ldots,q_{N},\kappa_1,\kappa_2,\ldots,\kappa_{N-2}.$
Since $N\to\infty$ as $M\to\infty$, we get $\lambda^u(x)=\beta$.

For each $k$ and
 $\omega\in\mathcal S(k)$ 
choose a point $x_\omega\in \omega\cap Z$, and define an atomic probability measure $\nu_k$ equally 
distributed on the set $\{x_\omega\colon \omega\in\mathcal S(k)\}$.
Let $\nu$ denote an accumulation point of the sequence $\{\nu_k\}_k$. Since $Z$ is closed,
$\nu(Z)=1$.
For  $\varepsilon>0$ and $x\in W^u$ let $D_\varepsilon(x)$ denote the closed ball in $W^u$ of radius $\varepsilon$ about $x$. 
By virtue of \cite[Lemma 2.1]{You82},
the desired lower estimate in Lemma \ref{lowd} 
follows if 
\begin{equation}\label{mass}
\liminf_{\varepsilon\to 0}\frac{\log \nu D_\varepsilon(x)}{\log \varepsilon} 
\geq \limsup_{n\to\infty}\frac{h(\mu_n)}{\lambda^u(\mu_n)}\quad \forall x\in Z.
\end{equation}


To show \eqref{mass} consider the set of
pairs $(n,s)$ of integers such that
$n>1$ and $0\le s< \kappa_{n}$.
We introduce an order in this set as follows:
$(n_1,s_1)<(n_2,s_2)$ if $n_1<n_2$, or $n_1=n_2$ and $s_1<s_2$.
For a pair $(n,s)$ in this set, define
\begin{equation*}
a_{n,s}= \exp \left[-\kappa_{n-1}q_{n-1}\left(\lambda^u(\mu_{n-1})+ \frac{2}{n-1}\right)
-sq_{n}\left(\lambda^u(\mu_{n})+ \frac{1}{n}\right) \right].\end{equation*}
We have
\[
a_{n,0}= \exp \left(-\kappa_{n-1}q_{n-1}\left(\lambda^u(\mu_{n-1})+ \frac{2}{n-1}\right)\right),\]
and
\[
a_{n-1,\kappa_{n}-1}= \exp \left(-\kappa_{n-2}q_{n-2}\left(\lambda^u(\mu_{n-2})+
\frac{2}{n-2}\right)
-(\kappa_{n-1}-1)q_{n-1}\left(\lambda^u(\mu_{n-1})+ \frac{1}{n-1}\right) \right).\]
From Using the uniform boundedness of $\{\lambda^u(\mu_n)\}_n$ 
We choose $\{\kappa_n\}_n$ so that $\kappa_{n-1}q_{n-1}\gg \kappa_{n-1}q_{n-2}$ and as a result 
$a_{n,0}<a_{n-1,\kappa_{n}-1}$,
namely, the sequence $\{a_{n,s}\}_{(n,s)}$ is monotone decreasing.

For sufficiently small $\varepsilon>0$ set $k(\varepsilon)=\max\{ k\geq1\colon\varepsilon\leq a_{N(k),s(k)}\}$, and define $N=N(k(\varepsilon))$,
$s=s(k(\varepsilon))$.
For each $\omega\in\mathcal S(k)$ set $\omega^u=\omega\cap\gamma^u(\zeta_0).$  
From \eqref{00}, for any $y\in\omega^u$ we have
\begin{align*}
\left|\sum_{j=0}^{\kappa_1q_1+\cdots+\kappa_{N-1}q_{N-1}+
sq_{N}-1}\log J^u(f^jy)\right|&\leq \kappa_{N-1}q_{N-1}\left(\lambda^u(\mu_{N-1})+\frac{2}{N-1}\right)+s q_{N}
\left(\lambda^u(\mu_{N})+\frac{1}{N}\right).
\end{align*}
where the second and the last inequalities hold provided
$\kappa_{N-1}$ is sufficiently large compared to 
$q_1,q_2,\ldots,q_{N},\kappa_1,\kappa_2,\ldots,\kappa_{N-2}.$

Since the curve $f^{\kappa_1q_1+\cdots+\kappa_{N-1}q_{N-1}+sq_N}\omega^u$ belongs to $\tilde\Gamma^u$,
the Mean Value Theorem gives
\begin{equation}\label{Q}
{\rm length}(\omega^u)\geq\frac{1}{2} \exp\left[-\kappa_{N-1}q_{N-1}\left(\lambda^u(\mu_{N-1})+\frac{2}{N-1}\right)-sq_N\left(\lambda^u(\mu_{N})+\frac{1}{N}\right)\right].
\end{equation}
Hence, for any $x\in Z$  the number of elements of $\mathcal S(k)$ which intersect
$D_\varepsilon(x)$ is at most
$$\frac{2\varepsilon}{\inf_{\omega^u}{\rm length}(\omega^u)}\leq \frac{2a_{N,s}}{\inf_{\omega^u}{\rm length}(\omega^u)}\leq 4.$$

By construction, for every $p\geq k$,
$$\nu_p(\omega^u)=\frac{\#\{\omega'\in \mathcal S(p)\colon \omega'\subset \omega\}}{\#\mathcal S(p)}=\frac{1}{\#\mathcal S(k)}.$$
Since $\nu$ charges no weight to the endpoints of $\omega^u$, 
\begin{equation*}\label{nu0}\nu(\omega^u)=\lim_{p\to\infty}\nu_p(\omega^u)=\frac{1}{\#\mathcal S(k)}.\end{equation*}
Using this and \eqref{00},
\begin{align*}
\nu D_\varepsilon(x)&\leq\frac{4}{\#\mathcal S(k)} \leq\frac{4}{   (\#\mathcal R_{N-1})^{\kappa_{N-1}}\cdot(\#\mathcal R_{N})^{s}    }\\
   &\leq4\exp\left[-\kappa_{N-1}q_{N-1}\left(h(\mu_{N-1})-\frac{1}{N-1}\right)-
sq_{N}\left(h(\mu_{N})-\frac{1}{N}\right)\right].\end{align*}
This yields
$$\frac{\log\nu D_\varepsilon(x)}{\log\varepsilon}\geq\frac{\kappa_{N-1}q_{N-1}\left(h(\mu_{N-1})-
1/(N-1)\right)+sq_{N}\left(h(\mu_{N})-1/N\right)}
{\kappa_{N-1}q_{N-1}\left(\lambda^u(\mu_{N-1})+2/(N-1)\right)
+sq_{N}\left(\lambda^u(\mu_{N})+1/N\right)}+\frac{\log4}{\log\varepsilon}.$$
The desired inequality holds since $N \to\infty$ as $\varepsilon\to0$.
This completes the proof of Lemma \ref{lowd}.
\end{proof}

\subsection{Approximation with measures with positive entropy}\label{nergodic}
We need two approximation lemmas on measures. The first one asserts that
for any ergodic measure with zero entropy one can find another ergodic one with small positive entropy
and similar unstable Lyapunov exponent.
The second one asserts that for any non ergodic measure one can find an ergodic one with similar
entropy and similar unstable Lyapunov exponent. 

\begin{lemma}\label{apr}
For any $\mu\in\mathcal M^e(f)$ with $h(\mu)=0$ and $\varepsilon>0$
there exists $\nu\in\mathcal M^e(f)$ such that $0<h(\nu)<\varepsilon$
and $|\lambda^u(\mu)-\lambda^u(\nu)|<\varepsilon$.
\end{lemma}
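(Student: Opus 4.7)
The plan is to first approximate $\mu$ by the invariant measure supported on a single periodic orbit of $f$, obtained via the symbolic coding $\iota\colon\Sigma_2\to\Omega$ of \cite[Lemma~3.1]{SenTak1} (used already in the proof of Lemma~\ref{katok}), and then embed that periodic orbit into a sub-horseshoe of topological entropy $(\log 2)/n$, which carries an ergodic measure of small positive entropy and close unstable Lyapunov exponent.

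Since $h(\mu)=0$, $\mu$ is ergodic, and $\mu(\Omega_*)=0$ by Corollary~\ref{G0}, the Ergodic Theorem produces a $\mu$-typical $x\in\Omega$ satisfying $\tfrac{1}{n}\sum_{i=0}^{n-1}\log J^u(f^ix)\to\lambda^u(\mu)$ and $x\in G_m$ for some fixed $m$. Lift $x$ to $\tilde x\in\iota^{-1}(x)$. By Poincar\'e recurrence applied to a sufficiently fine cylinder partition of $\Sigma_2$, one can choose $n$ arbitrarily large such that $\sigma^n\tilde x$ lies in the same long cylinder as $\tilde x$ and the Birkhoff average over $[0,n)$ differs from $\lambda^u(\mu)$ by less than $\varepsilon/4$. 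Let $\tilde p\in\Sigma_2$ be the periodic extension of the block $(\tilde x_0,\ldots,\tilde x_{n-1})$; then $p:=\iota(\tilde p)$ is a period-$n$ point of $f$ whose orbit shadows $x,fx,\ldots,f^{n-1}x$ through the same symbols. Because $x\in G_m$, the distortion estimates of Lemma~\ref{lyapbdd} and Lemma~\ref{leaf1}(a) apply uniformly along these parallel orbit segments, yielding
$$\bigl|\lambda^u(\delta_p^{(n)})-\lambda^u(\mu)\bigr|<\varepsilon/3,\qquad \delta_p^{(n)}:=\tfrac{1}{n}\sum_{i=0}^{n-1}\delta_{f^ip}.$$

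Repeat the construction starting from another $\mu$-typical point to produce a second periodic point $q$ of the same period $n$, coded by a distinct $n$-block $\tilde q\in\{1,2\}^n$, with $|\lambda^u(\delta_q^{(n)})-\lambda^u(\mu)|<\varepsilon/3$. Regard $\{\tilde p,\tilde q\}$ as an alphabet of $n$-blocks and let $\Lambda_{2,n}\subset\Sigma_2$ be the Cantor set of all their concatenations; this is $\sigma^n$-invariant and conjugate to the full 2-shift. Its $\sigma$-saturation $X:=\bigcup_{k=0}^{n-1}\sigma^k\Lambda_{2,n}$ is a compact $\sigma$-invariant subset of $\Sigma_2$. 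Take $\tilde\nu_0$ to be the Bernoulli $(1/2,1/2)$ measure on $\Lambda_{2,n}$ and set $\tilde\nu:=\tfrac{1}{n}\sum_{k=0}^{n-1}\sigma^k_*\tilde\nu_0$; this is $\sigma$-ergodic on $X$ with entropy $(\log 2)/n$ by Abramov's formula. Define $\nu:=\iota_*\tilde\nu\in\mathcal M^e(f)$. Then $h(\nu)=(\log 2)/n$, and since a $\nu$-typical orbit is composed asymptotically of equal proportions of $\tilde p$- and $\tilde q$-blocks with each block giving a controlled Birkhoff sum of $\log J^u$,
$$\lambda^u(\nu)=\tfrac{1}{2}\lambda^u(\delta_p^{(n)})+\tfrac{1}{2}\lambda^u(\delta_q^{(n)})+r_n,$$
with $|r_n|\to 0$ as $n\to\infty$. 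Choosing $n$ so large that $(\log 2)/n<\varepsilon$ and $|r_n|<\varepsilon/3$ gives $0<h(\nu)<\varepsilon$ and $|\lambda^u(\nu)-\lambda^u(\mu)|<\varepsilon$, as required.

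The main obstacle is the closing-up step producing $p$: classical shadowing is unavailable because of the tangency $\zeta_0$. The restriction $x\in G_m$ together with Lemma~\ref{sr} confines the relevant iterates to the uniformly-hyperbolic regime, and the Markov structure of proper rectangles (Sect.~\ref{induced map}) together with the distortion estimates of Lemmas~\ref{lyapbdd}, \ref{leaf1} substitute for a shadowing lemma. The degenerate cases $\mu=\delta_P$ or $\mu=\delta_Q$ are handled by simply taking $p$ to be the fixed saddle in question and choosing $q$ to be any nearby long periodic orbit with close Lyapunov exponent, which exists by surjectivity of $\iota$.
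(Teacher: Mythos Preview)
Your overall strategy---approximate $\mu$ by a periodic orbit, then build a small horseshoe around it---parallels the paper's. However, the execution has two genuine gaps. First, the distortion control you invoke (Lemma~\ref{lyapbdd} and Lemma~\ref{leaf1}(a)) is stated for \emph{proper rectangles} coming from the inducing scheme on~$\Theta$, whereas your closing argument and your sub-horseshoe are built from cylinders of the full $2$-shift coding~$\iota\colon\Sigma_2\to\Omega$. These are different partitions, and nothing in the paper tells you that two points in the same $\Sigma_2$-cylinder of length~$n$ have comparable $n$-step products $\prod_{i=0}^{n-1}J^u(f^i\cdot)$. In particular your claim that $\lambda^u(\nu)=\tfrac12\lambda^u(\delta_p^{(n)})+\tfrac12\lambda^u(\delta_q^{(n)})+r_n$ with $r_n\to0$ is unsupported: a concatenated orbit in $\iota(X)$ need not lie in any $G_m$, and $\log J^u$ is unbounded near~$Q$, so the block-by-block Birkhoff sums are not a~priori controlled. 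Second, the production of a second periodic point~$q$ with a \emph{distinct} $n$-block ``from another $\mu$-typical point'' fails whenever $\mu$ is itself a periodic-orbit measure (not just $\delta_P,\delta_Q$): all $\mu$-typical points then yield the same block up to cyclic shift, and you do not explain why the resulting sub-horseshoe still has the required properties.

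The paper sidesteps both issues by appealing to Katok's Closing Lemma \cite[Main Lemma]{Kat80} to obtain a hyperbolic periodic point $p$ with $|\lambda^u(\mu)-\lambda^u(\mu')|<\varepsilon/2$ directly (here $\mu$ is hyperbolic since $\lambda^u(\mu)\ge\lambda_m^u>0$ and $f$ contracts area), and then to the Poincar\'e--Birkhoff--Smale theorem to produce a \emph{uniformly hyperbolic} basic set containing $p$ inside a thin strip around $W^s(p)$. On such a set $\log J^u$ is automatically continuous, so the measure of maximal entropy~$\nu$ of the restricted dynamics satisfies $0<h(\nu)<\varepsilon$ and $|\lambda^u(\mu')-\lambda^u(\nu)|<\varepsilon/2$ once the strip is thin enough. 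This avoids any appeal to the bespoke distortion machinery of Sect.~\ref{bdd}--\ref{induced map} and handles all zero-entropy $\mu$ uniformly.
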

\begin{proof}
By Katok's Closing Lemma \cite[Main Lemma]{Kat80} 
there exists a periodic point $p$ and an atomic measure $\mu'$ supported
on the orbit of $p$ such that
 $|\lambda^u(\mu)-\lambda^u(\mu')|<\varepsilon/2$.
 Since there is a transverse homoclinic point associated to $p$, 
from the Poincar\'e-Birkhoff-Smale Theorem (see e.g. \cite[Theorem 6.5.5]{KatHas95}) 
there exists a non trivial basic set containing $p$ and the transverse homoclinic point.
The isolating neighborhood of the basic set is a thin strip around the stable manifold of $p$.
Taking a sufficiently thin isolating neighborhood one can make sure
that the measure of maximal entropy of $f$ restricted to the basic set, denoted by $\nu$,
satisfies  $0<h(\nu)<\varepsilon$ and $|\lambda^u(\mu')-\lambda^u(\nu)|<\varepsilon/2$.
\end{proof}

\begin{lemma}\label{approximate}
For any $\mu\in\mathcal M(f)$ and $\varepsilon>0$
there exists $\nu\in\mathcal M^e(f)$ such that
 $h(\nu)>0$, $|h(\mu)- h(\nu)|<\varepsilon$ and $|\lambda^u(\mu)-\lambda^u(\nu)|<\varepsilon$.
 \end{lemma}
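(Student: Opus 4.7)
My plan is to split on the sign of $h(\mu)$. The zero-entropy case follows directly from Lemma~\ref{apr} via the ergodic decomposition, while the positive-entropy case requires gluing finitely many Katok horseshoes into a Bernoulli-coded tower.

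\emph{Case $h(\mu)=0$.} Write $\mu = \int \eta \, d\tilde\mu(\eta)$ for the ergodic decomposition. Since $h$ is affine and $\lambda^u$ linear on $\mathcal{M}(f)$, one has $\int h(\eta)\, d\tilde\mu = 0$ and $\int \lambda^u(\eta)\,d\tilde\mu = \lambda^u(\mu)$, so there exists $\eta_0\in\mathcal{M}^e(f)$ with $h(\eta_0)=0$ and $|\lambda^u(\eta_0)-\lambda^u(\mu)|<\varepsilon/2$. Applying Lemma~\ref{apr} to $\eta_0$ with parameter $\varepsilon/2$ produces an ergodic $\nu$ with $0<h(\nu)<\varepsilon/2$ and $|\lambda^u(\nu)-\lambda^u(\mu)|<\varepsilon$, as required.

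\emph{Case $h(\mu)>0$.} Using the ergodic decomposition and partitioning $\mathcal{M}^e(f)$ into Borel cells of $(h,\lambda^u)$-oscillation less than $\varepsilon/4$, I extract finitely many ergodic $\eta_1,\ldots,\eta_k$ with positive weights $p_i$ summing to one such that $|h(\mu)-\sum p_i h(\eta_i)|<\varepsilon/4$ and $|\lambda^u(\mu)-\sum p_i \lambda^u(\eta_i)|<\varepsilon/4$. Any $\eta_i$ with zero entropy is replaced via Lemma~\ref{apr} by a nearby small-positive-entropy ergodic measure, so I may assume every $h(\eta_i)>0$. For each $i$, Lemma~\ref{katok} yields a family $\mathcal{R}_i$ of proper rectangles with common inducing time $q_i$ satisfying $|(1/q_i)\log\#\mathcal{R}_i - h(\eta_i)|<\varepsilon/8$, and $|(1/q_i)\sum_{j=0}^{q_i-1}\log J^u(f^jx)-\lambda^u(\eta_i)|<\varepsilon/8$ for any $x\in\mathcal{W}^u\cap\omega$, $\omega\in\mathcal{R}_i$. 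Let $Q$ be a common multiple of the $q_i$ and replace $\mathcal{R}_i$ by its $\kappa_i$-fold concatenation $\mathcal{R}_i(\kappa_i)$ with $\kappa_i = Q/q_i$ (as in Proposition~\ref{lowd}); the Katok estimates survive by averaging over the $\kappa_i$ blocks. After shrinking slightly to enforce disjointness, set $\mathcal{A} = \bigsqcup_i \mathcal{R}_i(\kappa_i)$ and apply Lemma~\ref{hyp} to obtain an injective coding $\pi:\Sigma_\ell\to\Omega$ intertwining $\sigma$ with $f^Q$, where $\ell=\#\mathcal{A}$.

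On $\Sigma_\ell$ define the Bernoulli measure $\tilde\nu$ assigning mass $p_i/\#\mathcal{R}_i(\kappa_i)$ to each symbol drawn from $\mathcal{R}_i(\kappa_i)$, and set $\nu_0=\pi_*\tilde\nu$ and $\nu = Q^{-1}\sum_{j=0}^{Q-1} f^j_*\nu_0$. The measure $\nu_0$ is $f^Q$-ergodic, and since any $f$-invariant set has $\nu_0$-measure in $\{0,1\}$, $\nu$ is $f$-ergodic. A direct Bernoulli computation gives $h(\nu) = h_\sigma(\tilde\nu)/Q = -Q^{-1}\sum_i p_i\log p_i + \sum_i p_i\cdot q_i^{-1}\log\#\mathcal{R}_i$, which is within $\varepsilon/2$ of $\sum p_i h(\eta_i)$ once $Q$ is taken large enough; and Lemma~\ref{katok}(c) applied block-by-block on each $\omega\in\mathcal{A}$ yields $\lambda^u(\nu)=\sum p_i\lambda^u(\eta_i)+O(\varepsilon)$. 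Combining with the initial convex-combination approximation gives $|h(\nu)-h(\mu)|<\varepsilon$ and $|\lambda^u(\nu)-\lambda^u(\mu)|<\varepsilon$, with $h(\nu)>0$ automatic. The main obstacle I foresee is enforcing the disjointness of the rectangles in $\bigcup_i\mathcal{R}_i(\kappa_i)$ required by Lemma~\ref{hyp}: they arise from horseshoes approximating a priori unrelated ergodic measures and can overlap, so a careful perturbation---together with verification that the quantitative estimates of Lemma~\ref{katok} remain valid---is needed.
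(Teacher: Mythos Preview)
Your zero-entropy case contains a genuine gap. From $\int\lambda^u(\eta)\,d\tilde\mu(\eta)=\lambda^u(\mu)$ you cannot conclude that some single ergodic component $\eta_0$ has $\lambda^u(\eta_0)$ within $\varepsilon/2$ of the mean $\lambda^u(\mu)$: take for instance $\mu=\tfrac12(\delta_P+\delta_Q)$, whose only ergodic components have unstable exponents near $\log 2$ and $\log 4$, neither close to their average. The fix is to drop the case split: approximate $\mu$ by a finite convex combination $\sum p_i\eta_i$ of ergodic measures with both $h$ and $\lambda^u$ close, replace each zero-entropy $\eta_i$ by a small-positive-entropy ergodic measure via Lemma~\ref{apr}, and feed the resulting convex combination into your positive-entropy machinery.

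Your positive-entropy case is a genuinely different and more constructive route than the paper's. The paper proceeds exactly as you begin---finite convex combination of ergodic measures, Lemma~\ref{apr} to ensure every component has positive entropy---but then, rather than gluing Katok horseshoes into a Bernoulli tower, it simply invokes that $f|\Omega$ is a factor of the full $2$-shift \cite[Lemma 3.1]{SenTak2} and hence has specification, so that ergodic measures are entropy-dense \cite{EizKifWei94}; this yields ergodic $\xi_n\to\nu$ with $h(\xi_n)\to h(\nu)$, and \cite[Lemma 4.4]{SenTak1} gives $\lambda^u(\xi_n)\to\lambda^u(\nu)$ because $\nu\{Q\}=0$. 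This is three lines versus your full construction. Regarding the disjointness obstacle you flag: it is milder than you fear, since two proper rectangles with the \emph{same} inducing time $Q$ cannot be strictly nested (strict nesting forces a strictly larger inducing time), so by the trichotomy for proper rectangles the elements of $\bigcup_i\mathcal R_i(\kappa_i)$ are already pairwise disjoint up to shared stable sides and possible duplicates, and duplicates can be discarded with negligible effect on the cardinality estimates. What does require more care in your write-up is the entropy identity $h_f(\nu)=Q^{-1}h_\sigma(\tilde\nu)$: it follows from $h_{f^Q}(\nu)=h_{f^Q}(\nu_0)$ (affinity of entropy and the fact that $f^j$ conjugates $(f^Q,\nu_0)$ to $(f^Q,f^j_*\nu_0)$) together with $h_{f^Q}(\nu)=Q\,h_f(\nu)$.
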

 
 \begin{proof}

Considering the ergodic decomposition of $\mu$ one can find a linear combination
$\mu'=a_1\mu_1+\cdots+a_s\mu_s$ of ergodic measures such that 
 $|h(\mu)-h(\mu')|<\varepsilon/2$ and $|\lambda^u(\mu)-\lambda^u(\mu')|<\varepsilon/2$.
 By Lemma \ref{apr}, for each $\mu_i$ there exists $\nu_i\in\mathcal M^e(f)$ such that
 $h(\nu_i)>0$, $|h(\mu_i)-h(\nu_i)|<\varepsilon/2$ and $|\lambda^u(\mu_i)-\lambda^u(\nu_i)|<\varepsilon/2$.
 Set $\nu=a_1\nu_1+\cdots+a_s\nu_s.$
Then $h(\nu)>0$,
  $|h(\mu')-h(\nu)|<\varepsilon/2$ and $|\lambda^u(\mu')-\lambda^u(\nu)|<\varepsilon/2.$
Hence $|h(\mu)-h(\nu)|<\varepsilon$ and $|\lambda^u(\mu)-\lambda^u(\nu)|<\varepsilon.$ 

We note that 
$f|\Omega$ is a factor of the full shift on two symbols
 \cite[Lemma 3.1]{SenTak2}, and therefore has the specification property
 \cite[Lemma 1(b)]{Sig74}.
Hence, ergodic measures are entropy-dense
\cite{EizKifWei94}:
there exists a sequence
$\{\xi_n\}_n$ in $\mathcal M^e(f)$ such that $\xi_n\to\nu$ and $h(\xi_n)\to h(\nu)$ as $n\to\infty$. 
By \cite[Lemma 4.4]{SenTak1} and $\nu\{Q\}=0$, we obtain  $\lambda^u(\xi_n)\to\lambda^u(\nu)$. 
 \end{proof}

\section{Proofs of the theorems}
In this section we bring the results in Sect.2 together and prove the theorems.
In Sect.\ref{completeness} we prove Theorem A.
 In Sect.\ref{upperest} we complete the proof of Theorem B.
In Sect.\ref{continuous} we prove Theorem C.
In Sect.\ref{irregular} we prove Theorem D.

\subsection{Domain of the Lyapunov spectrum}\label{completeness}
We now prove Theorem A.
\medskip

\noindent{\it Proof of Theorem A.}
Let $\beta\in I$. For $\varepsilon>0$
set \begin{equation}\label{Feps}d^u_\varepsilon={\sup}\left\{\frac{h(\mu)}{\lambda^u
(\mu)}\colon\mu\in\mathcal M(f),\ \left|\lambda^u(\mu)
-\beta\right|<\varepsilon\right\}.\end{equation}
We also define $d^{u,e}_\varepsilon$ by restricting the range of the supremum to
the set
$\mathcal M^e(f)$ of ergodic measures.
The next lemma establishes the ``if" part of Theorem A.

\begin{lemma}\label{complete}
For any $\beta\in I$, $\Omega^u(\beta)\neq\emptyset$ and
$L^u(\beta)\geq \displaystyle{\lim_{\varepsilon\to0}d^{u,e}_\varepsilon}.$
In addition, if $\beta\in{\rm int}I$, then $L^u(\beta)>0$.
 \end{lemma}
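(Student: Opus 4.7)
The plan is to apply Lemma~\ref{lowd} to a carefully chosen sequence of ergodic measures converging in $\lambda^u$ to $\beta$, and then to establish the strict positivity at interior points of $I$ through a convex combination argument.

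For the dimension bound, for each $\varepsilon>0$ set $\mathcal E_\varepsilon=\{\mu\in\mathcal M^e(f):h(\mu)>0,\ |\lambda^u(\mu)-\beta|<\varepsilon\}$. This set is non-empty: since $\lambda^u$ is affine on the convex set $\mathcal M(f)$ with infimum $\lambda_m^u$ and supremum $\lambda_M^u$, for any $\beta\in I$ there exist $\mu\in\mathcal M(f)$ with $\lambda^u(\mu)$ arbitrarily close to $\beta$, and Lemma~\ref{approximate} converts any such $\mu$ into a member of $\mathcal E_\varepsilon$. Since zero-entropy ergodic measures contribute $0$ to the supremum, $\sup_{\mu\in\mathcal E_\varepsilon}h(\mu)/\lambda^u(\mu)=d^{u,e}_\varepsilon$. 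For each $n\ge 1$, pick $\mu_n\in\mathcal E_{1/n}$ with $h(\mu_n)/\lambda^u(\mu_n)>d^{u,e}_{1/n}-1/n$. Feeding $\{\mu_n\}$ into Lemma~\ref{lowd} produces a non-empty closed set $Z\subset\Omega^u(\beta)$ with
\[
\dim_H^u(Z)\ge\limsup_{n\to\infty}\frac{h(\mu_n)}{\lambda^u(\mu_n)}=\lim_{\varepsilon\to 0}d^{u,e}_\varepsilon,
\]
proving simultaneously $\Omega^u(\beta)\ne\emptyset$ and the desired dimension estimate.

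For the strict positivity on $\mathrm{int}\,I$, choose $\beta_1<\beta<\beta_2$ in $I$ and $\mu_i\in\mathcal M(f)$ with $\lambda^u(\mu_1)<\beta<\lambda^u(\mu_2)$. Applying Lemma~\ref{approximate} to each $\mu_i$ with sufficiently small tolerance produces ergodic $\nu_i\in\mathcal M^e(f)$ with $h(\nu_i)>0$ whose unstable Lyapunov exponents still straddle $\beta$. For the unique $a\in(0,1)$ such that the convex combination $\nu_a=a\nu_1+(1-a)\nu_2$ satisfies $\lambda^u(\nu_a)=\beta$, the affineness of both $\lambda^u$ and $h$ on $\mathcal M(f)$ gives $h(\nu_a)=ah(\nu_1)+(1-a)h(\nu_2)>0$. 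A further application of Lemma~\ref{approximate} produces, for every $\varepsilon>0$, an ergodic measure $\xi_\varepsilon$ with $h(\xi_\varepsilon)>h(\nu_a)/2$ and $|\lambda^u(\xi_\varepsilon)-\beta|<\varepsilon$, whence
\[
d^{u,e}_\varepsilon\ge\frac{h(\xi_\varepsilon)}{\lambda^u(\xi_\varepsilon)}\ge\frac{h(\nu_a)}{2\lambda_M^u}>0
\]
uniformly in $\varepsilon$. Combined with the first part this yields $L^u(\beta)>0$.

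The substantive analysis is packaged into Lemmas~\ref{lowd} and~\ref{approximate}, so what remains is essentially bookkeeping to pick approximating ergodic measures with simultaneously positive entropy and approximately optimal ratio $h/\lambda^u$. The one genuine subtlety is that at the endpoints of $I$ no pair of measures has $\lambda^u$ strictly straddling $\beta$, so the convex combination argument does not deliver a uniform lower bound on $d^{u,e}_\varepsilon$; this is precisely why the strict positivity assertion is restricted to $\mathrm{int}\,I$.
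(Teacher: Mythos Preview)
Your proof is correct and follows essentially the same approach as the paper: both rely on Lemma~\ref{approximate} to produce ergodic measures of positive entropy with $\lambda^u$ near $\beta$, feed a suitable sequence into Lemma~\ref{lowd}, and use a convex combination of two straddling ergodic measures to secure strict positivity on $\mathrm{int}\,I$. Your organization is slightly cleaner in that you handle the bound $L^u(\beta)\ge\lim_{\varepsilon\to0}d^{u,e}_\varepsilon$ uniformly for all $\beta\in I$ first (by selecting near-optimal measures in $\mathcal E_{1/n}$) and only afterwards treat the positivity, whereas the paper interleaves the two claims by splitting into interior and boundary cases.
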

\begin{proof}
In the case $\beta\in  {\rm int}I$,
by Lemma \ref{approximate} it is possible to choose $\mu_1$, $\mu_2\in\mathcal M^e(f)$ 
with positive entropy and satisfying $\lambda^u(\mu_1)<\beta<\lambda^u(\mu_2)$.
Choose $t\in(0,1)$ such that $t\lambda^u(\mu_1)+(1-t)\lambda^u(\mu_2)=\beta$.
By Lemma \ref{approximate} again, there exists a sequence $\{\nu_n\}_n$ in $\mathcal M^e(f)$
with $\displaystyle{\lim_{n\to\infty}}h(\nu_n)>0$ and
$\lambda^u(\nu_n)\to\beta$ as $n\to\infty$.
Lemma \ref{lowd} yields
$\Omega^u(\beta)\neq\emptyset$ and 
$L^u(\beta)\geq  \displaystyle{\lim_{\varepsilon\to0}d^{u,e}_\varepsilon}>0.$
In the case $\beta=\lambda_m^u$, by Lemma \ref{approximate}
it is possible to choose a sequence $\{\mu_n\}_n$ in $\mathcal M^e(f)$ such that $\lambda^u(\mu_n)\to\lambda_m^u$ as $n\to\infty$
and $h(\mu_n)>0$ for every $n$.
Lemma \ref{lowd} yields $\Omega^u(\beta)\neq\emptyset$ and 
$L^u(\beta)\geq \displaystyle{\lim_{\varepsilon\to0}d^{u,e}_\varepsilon}.$
A proof for the case $\beta=\lambda_M^u$ is completely analogous. 
\end{proof}

For a proof of the ``only if" part in Theorem A we need a couple of lemmas.

\begin{lemma}\label{new2}
If $x\in \bigcup_{m=0}^\infty G_m\setminus W^s(Q)$, then $\bar{\lambda}^u(x)\geq\lambda_m^u$.
\end{lemma}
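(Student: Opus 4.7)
The plan is to produce from the orbit of $x$ an $f$-invariant Borel probability measure $\mu\in\mathcal M(f)$ for which $\bar\lambda^u(x)=\lambda^u(\mu)$; the conclusion $\bar\lambda^u(x)\ge\lambda_m^u$ is then immediate from the defining inequality $\lambda^u(\mu)\ge\lambda_m^u$.

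First I would choose a subsequence $n_j\to\infty$ along which the Cesaro sums $(1/n_j)\sum_{i=0}^{n_j-1}\log J^u(f^i x)$ converge to $\bar\lambda^u(x)$, and by compactness of the space of probability measures on the compact set $\Omega$, pass to a further subsequence along which the empirical measures $\mu_{n_j}=(1/n_j)\sum_{i=0}^{n_j-1}\delta_{f^i x}$ converge weak-$*$ to a Borel probability $\mu$ on $\Omega$. A standard Krylov--Bogolyubov computation shows $\mu$ is $f$-invariant, hence $\mu\in\mathcal M(f)$.

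The crux is to justify passing the Cesaro limit through the integrand $\log J^u$. Since $J^u$ is bounded on $\Omega$ and continuous on $\Omega\setminus\{Q\}$, the only obstruction to $\lim_j\int\log J^u\,d\mu_{n_j}=\int\log J^u\,d\mu$ is a possible atom $\mu\{Q\}>0$. Here the hypothesis $x\notin W^s(Q)$ is decisive: because $W^s(Q)$ contains every preimage of $Q$, no iterate $f^i x$ equals $Q$, so redefining $\log J^u$ at the single point $Q$ does not change any of the Cesaro sums. Concretely, one introduces the continuous extension $\widetilde{\log J^u}$ on $\Omega$ with $\widetilde{\log J^u}(Q)=\ell$, where $\ell=\lim_{\Omega\setminus\{Q\}\ni y\to Q}\log J^u(y)$; weak-$*$ convergence then yields $\bar\lambda^u(x)=\int\widetilde{\log J^u}\,d\mu$, which coincides with $\lambda^u(\mu)$ provided $\ell=\log J^u(Q)$.

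The main obstacle will be establishing $\ell=\log J^u(Q)$ cleanly, because the paper itself cautions that $y\mapsto E^u_y$ is only measurable at $Q$. What saves the argument is that only the \emph{norm} $J^u(y)=\|D_yf|E^u_y\|$, and not the direction $E^u_y$, enters the discussion: by the invariant cone field inherent to H\'enon-like maps, $E^u_y$ has slope at most $\sqrt b$ for every $y\in\Omega$ (including $Q$), and \eqref{henon} then gives $J^u(y)=|2a^* y_1|+O(\sqrt b)$ uniformly for $y\in\Omega$ near $Q$. The right-hand side is a continuous function of $y$, so $\ell$ exists and coincides with $J^u(Q)$, and the argument closes.
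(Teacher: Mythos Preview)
Your argument has a genuine gap at the point where you claim that $E^u_y$ has slope at most $\sqrt{b}$ for every $y\in\Omega$, so that $\log J^u$ admits a continuous extension at $Q$. This is false, and the paper says so explicitly: $y\mapsto E^u_y$ is only measurable at $Q$. Concretely, take $y=f^n\zeta_0$ for $n\ge1$. These points belong to $\Omega$ and converge to $Q$, and since $\zeta_0$ is a tangency between $W^u$ and $W^s(Q)$ we have $E^u_{f^n\zeta_0}=T_{f^n\zeta_0}W^u=T_{f^n\zeta_0}W^s(Q)\to E^s_Q$, which is nearly vertical. Along this sequence $J^u(f^n\zeta_0)$ tends to the \emph{stable} eigenvalue of $D_Qf$, not to $J^u(Q)$; this is exactly why $\lambda^u(\zeta_0)<0$. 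So the limit $\ell$ you define does not exist, and there is no continuous extension of $\log J^u$ to invoke. Note also that you never use the hypothesis $x\in G_m$, which is a warning sign: without it the statement is false (e.g.\ $x=\zeta_0$ itself has $\bar\lambda^u(x)=\lambda^u(\zeta_0)<0<\lambda_m^u$; it lies in $W^s(Q)$, but nearby points in $\Omega_*\setminus W^s(Q)$ need not satisfy the conclusion either, cf.\ Lemma~\ref{new}).

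The paper's proof is quite different and uses $x\in G_m$ essentially. Since $x\notin W^s(Q)$ the orbit returns to $\Theta$ infinitely often, so there is a nested sequence of proper rectangles $\omega_n\ni x$ with $\tau(\omega_n)\to\infty$. Each $\omega_n$ contains a periodic point $q_n$ of period $\tau(\omega_n)$ (Lemma~\ref{hyp}), and the bounded distortion Lemma~\ref{lyapbdd}---which requires $\omega_n\cap G_m\neq\emptyset$---gives
\[
\left|\frac{1}{\tau(\omega_n)}\sum_{i=0}^{\tau(\omega_n)-1}\log J^u(f^ix)-\frac{1}{\tau(\omega_n)}\sum_{i=0}^{\tau(\omega_n)-1}\log J^u(f^iq_n)\right|\le\frac{\log D_m}{\tau(\omega_n)}.
\]
The second average is $\lambda^u$ of the invariant measure on the orbit of $q_n$, hence $\ge\lambda_m^u$, and letting $n\to\infty$ gives $\bar\lambda^u(x)\ge\lambda_m^u$. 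This bypasses the discontinuity of $\log J^u$ entirely by comparing Birkhoff sums pointwise via distortion, rather than passing through a weak limit. Incidentally, the only semicontinuity available along empirical measures is the upper bound of Lemma~\ref{sentak}, which goes the wrong way for your purpose.
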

\begin{proof}

 Let
$x\in G_m$. 
Since $f^nx\in\Theta$ holds for
infinitely many $n>0$, there exists an infinite nested sequence $\omega_0\supset\omega_1\supset\cdots$ of proper rectangles containing $x$.
From Lemma \ref{hyp},
each $\omega_n$ contains a periodic point of period $\tau(\omega_n)$, denoted by $q_n$.
Since $\omega_n\cap G_m\neq\emptyset$,
Lemma \ref{lyapbdd} gives
$$\left|\frac{1}{\tau(\omega_n)}\sum_{i=0}^{\tau(\omega_n)-1}
\log J^u(f^iq_n)-\log J^u(f^ix)\right|\leq \frac{\log D_m}{\tau(\omega_n)}.$$
Since
 $\tau(\omega_n)\to\infty$ as $n\to\infty$, the desired inequality follows.
 \end{proof}

The next upper semi-continuity result
 follows from a slight modification the proof of \cite[Lemma 4.3]{SenTak1} in which a convergent sequence 
of $f$-invariant measures were treated.
For $x\in\Omega$ and $n\geq1$
write $\delta_x^n=(1/n)\sum_{i=0}^{n-1}\delta_{f^ix}$, where
$\delta_{f^ix}$ denotes the Dirac measure at $f^ix$.

\begin{lemma}\label{sentak}
Let $x\in\Omega$ and $\{n_k\}_k$, $n_k\nearrow\infty$ be such that $\delta_{x}^{n_k}$
converges
weakly to $\mu\in\mathcal M(f)$. Then $$\limsup_{k\to\infty}\int\log J^u\delta_x^{n_k}\leq\lambda^u(\mu).$$
\end{lemma}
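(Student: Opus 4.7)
The plan is to adapt \cite[Lemma 4.3]{SenTak1}, which proves the analogous upper semi-continuity along convergent sequences of $f$-invariant measures $\mu_n\to\mu$, to the present setting of empirical measures. The adaptation is minor because each $\delta_x^{n_k}$ is asymptotically invariant---one has $\|\delta_x^{n_k}-\delta_x^{n_k}\circ f^{-1}\|_{\mathrm{TV}}\leq 2/n_k$---so any appeal to invariance of the approximating sequence in the original proof contributes an error that is $o(1)$ as $k\to\infty$.

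The structural difficulty is the discontinuity of $\log J^u$ at the saddle $Q$, where $x\mapsto E^u_x$ is merely measurable; away from $Q$ the function is continuous and bounded above by $\log\|Df\|_\infty$. The strategy is first to establish that $\log J^u$ is upper semi-continuous on all of $\Omega$. Granted this, one takes bounded continuous $\psi_j\downarrow\log J^u$; weak convergence yields, for each $j$,
\[
\limsup_{k\to\infty}\int\log J^u\,d\delta_x^{n_k}\leq\lim_{k\to\infty}\int\psi_j\,d\delta_x^{n_k}=\int\psi_j\,d\mu,
\]
and monotone convergence in $j$ drives the right-hand side to $\int\log J^u\,d\mu=\lambda^u(\mu)$, which is the conclusion.

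To prove upper semi-continuity at $Q$, take $y_n\in\Omega\setminus\{Q\}$ with $y_n\to Q$. For $y_n$ in the rectangle $\Theta$ one has $y_n\in\mathcal W^u$ by the final bullet on long unstable leaves in Sect.\ref{stablegeo}, so $E^u_{y_n}$ is tangent to a long unstable leaf in $\Gamma^u$ and has slope $\leq\sqrt b$; the small remaining portion of $\Omega$ accumulating on $Q$ from outside $\Theta$ is handled by applying the Inclination Lemma to $f^{-1}$. Combined with continuity of $y\mapsto D_yf$ and the fact that the unstable eigendirection $E^u_Q$ of $D_Qf$ itself has slope $O(b)$, this gives $\limsup_n J^u(y_n)\leq J^u(Q)$, provided one shows that $E^u_{y_n}$ actually \emph{converges} to $E^u_Q$ and does not merely lie anywhere in the $\sqrt b$-cone around horizontal. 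This last refinement---which uses the exponential convergence of the long unstable leaves to the local branch of $W^u(Q)$ through $Q$---is the main obstacle, and parallels arguments of \cite[Sect.4]{SenTak1}; without it one gets only $\limsup J^u(y_n)\leq J^u(Q)+O(\sqrt b)$, because $D_Qf$ is not self-adjoint and horizontal is not its eigendirection.
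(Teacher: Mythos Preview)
Your approach—reducing everything to upper semi-continuity of $\log J^u$ on $\Omega$—is different from the paper's and would be cleaner if it worked, but the key step is only sketched and the sketch contains an error. You assert that for $y_n\to Q$ in $\Omega$ one has $E^u_{y_n}\to E^u_Q$. This is false: take $y_n=f^n\zeta_0\in\Omega$. Since $E^u_{f\zeta_0}=T_{f\zeta_0}W^u=T_{f\zeta_0}\mathcal F^s(f\zeta_0)$ by the very definition of the tangency, and the stable foliation is forward invariant with leaves contracted toward $Q$, one gets $E^u_{y_n}\to E^s_Q$, not $E^u_Q$. That particular sequence is harmless for upper semi-continuity because $\|D_Qf|E^s_Q\|<1<J^u(Q)$, but it shows that your argument for the convergence of directions is wrong, and neither the long-unstable-leaf geometry nor the Inclination Lemma for $f^{-1}$ rules it out. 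What you would actually need is a dichotomy: every accumulation point of $\{E^u_{y}:y\in\Omega,\ y\to Q\}$ lies in $\{E^u_Q,E^s_Q\}$. This may be true, but it is not what you argued, and excluding intermediate limit directions is essential because the operator norm of $D_Qf$ strictly exceeds its spectral radius $J^u(Q)$, so there \emph{are} unit vectors $v$ with $\|D_Qf v\|>J^u(Q)$.

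The paper sidesteps the pointwise question entirely. It writes $\mu=u\delta_Q+(1-u)\nu$ with $\nu\{Q\}=0$, fixes a small neighborhood $V$ of $Q$, and splits $\log J^u$ by a partition of unity $\{\rho_0,\rho_1\}$ with $\mathrm{supp}\,\rho_0\subset V$ and $Q\notin\mathrm{supp}\,\rho_1$. The $\rho_1$-piece is handled by ordinary weak convergence, since $\log J^u$ is continuous away from $Q$. For the $\rho_0$-piece the paper does \emph{not} bound $J^u(f^ix)$ for individual $i$; instead it bounds the product over each maximal orbit segment in $V$: if $f^{i_1}x\notin V$, $f^{i_1+1}x,\dots,f^{i_2-1}x\in V$, $f^{i_2}x\notin V$, then $\|D_{f^{i_1}x}f^{i_2-i_1}|E^u\|\leq C e^{\lambda^u(\delta_Q)(i_2-i_1)}$, which follows from the hyperbolicity of $Q$ regardless of the direction of $E^u_{f^{i_1}x}$ (the constant $C$ absorbs the transient before alignment with $E^u_Q$). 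Summing over segments, using that the proportion of time in $V$ is at most $u+\varepsilon$, and letting $V$ shrink gives the conclusion. The multiplicative estimate is exactly what makes the argument robust: it does not care where $E^u_{f^ix}$ points at any single time.
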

\begin{proof}
If $x\in W^s(Q)$, then $\delta_x^{n_k}\to\delta_Q$ and 
$\int\log J^u\delta_x^{n_k}\to\lambda^u(\delta_Q)$ as $k\to\infty$,
and so the desired inequality holds.
Assume $x\notin W^s(Q)$.
Write $\mu=u\delta_Q+(1-u)\nu$, $0\leq u\leq 1$, $\nu\in\mathcal M(f)$
 and $\nu\{Q\}=0$.  
Let $\varepsilon>0$. Let $V$ be a small open set containing $Q$,
$\mu(\partial V)=0$ and $\mu(V)\leq u+\varepsilon$.
 Fix a partition of unity $\{\rho_{0},\rho_{1}\}$ on $R$ such that
${\rm supp}(\rho_{0})=\overline{\{x\in R\colon \rho_{0}(x)\neq0\}}\subset V$
and $Q\notin {\rm supp}(\rho_{1})$.
Hence
$$\lim_{k\to\infty}\frac{1}{n_k}\{0\leq i<n_k\colon f^ix\in V\}
=\lim_{k\to\infty}\delta_x^{n_k}(V)=\mu(V)
\leq u+\varepsilon.$$ Since $x\notin W^s(Q)$, the forward orbit of $x$ is a concatenation of segments in $V$ and those out of $V$
Let $l_k$ denote the number of segments in $V$ up to time $n_k$.
If $0\leq i_1<i_2$ are such that
$f^{i_1}x\notin V$, $f^ix\in V$ for $i=i_1+1,\ldots,i_2-1$ and $f^{i_2}x\notin V$, then
$\|D_{f^{i_1}x}f^{i_2-i_1}|E^u_{f^{i_1}x}\|\leq Ce^{\lambda^u(\delta_Q)(i_2-i_1)}$.
Then 
$$\int\rho_{0}\log J^ud\delta_x^{n_k}=
\frac{1}{n_k}\sum_{i=0}^{n_k-1} (\rho_{0}\log
J^u)\circ f^i(x)\leq (u+2\varepsilon)\lambda^u(\delta_Q)+C\frac{l_k}{n_k}.$$
If $u<1$, then the weak convergence for the sequence $\{\frac{\delta_x^{n_k}-u\delta_Q}{1-u}\}_k$ of measures implies
$$\lim_{n\to\infty}\int\rho_{1}\log J^ud\delta_x^{n_k}=(1-u)\int\rho_{1}\log J^ud\nu\leq(1-u)\lambda^u(\nu).$$
The same inequality remains to hold for the case $u=1$.
Hence we have
\begin{align*}\limsup_{k\to\infty}\int \log J^ud\delta_x^{n_k}&\leq
\limsup_{k\to\infty}\int\rho_{0}\log
J^ud\delta_x^{n_k}+\lim_{k\to\infty}
\int\rho_{1}\log
J^ud\delta_x^{n_k}\\
&\leq (u+2\varepsilon)\lambda^u(\delta_Q)+
C\cdot\limsup_{k\to\infty}\frac{l_k}{n_k}
+(1-u)\lambda^u(\nu).
\end{align*}
The second term can be made arbitrarily small
by shrinking $V$.
 Then letting $\varepsilon\to0$ yields
the desired inequality. 
\end{proof}

To finish the proof of Theorem A, recall that $\hat\Omega^u=\{x\in\Omega^u\colon \underline{\lambda}^u(x)\neq \bar{\lambda}^u(x) \}$.
Let $x\in\Omega^u\setminus\hat\Omega^u$ and suppose $\lambda^u(x)\neq\lambda^u(\zeta_0)$.
It suffices to show $\lambda^u(x)\in I$.
Lemma \ref{new} gives
$x\in\bigcup_{m=0}^\infty G_m$.
If $x\in W^s(Q)$, then $x=Q$ and so $\lambda^u(x)=\lambda^u(Q)\in I$.
 Otherwise,
 Lemma \ref{new2} gives
$\lambda^u(x)\geq\lambda_m^u$. 
Since $\Omega$ is compact, 
there  is a subsequence $\{n_k\}_k$, $n_k\nearrow\infty$ such that 
 $\delta_x^{n_k}\to\mu\in\mathcal M(f)$
and $\displaystyle{\limsup_{k\to\infty}}\int\log J^u d\delta_x^{n_k}=\lambda^u(x)$.
Lemma \ref{sentak} gives $\lambda^u(x)\leq \lambda^u(\mu)\leq\lambda^u_M$.
\qed

\subsection{Formula for the Lyapunov spectrum}\label{upperest}
We now prove Theorem B.

\medskip
\noindent{\it Proof of Theorem B.}
We argue in two steps. Let $\beta\in I$. In Step 1 we estimate $L^u(\beta)$ from below.
In Step 2 we estimate $L^u(\beta)$ from above.
\medskip

\noindent{\it Step1(Lower estimate).} 
Let $\mu\in\mathcal M(f)$ be non ergodic with $h(\mu)>0$.
By Lemma \ref{approximate}, for any $\varepsilon>0$
there exists $\nu\in\mathcal M^e(f)$
such that
$|h(\mu)-h(\nu)|<\varepsilon$ and $|\lambda^u(\mu)-\lambda^u(\nu)|<\varepsilon$.
Since $h(\mu)\leq \log2$ and $\lambda^u(\mu)<\log5$,
$$\left|\frac{h(\mu)}{\lambda^u(\mu)}-\frac{h(\nu)}{\lambda^u(\nu)}\right|<\frac{(\log 2+\log 5)\varepsilon}{(\lambda_m^u)^2}<\frac{3\varepsilon}{(\lambda_m^u)^2}.$$
It follows that
$$d^{u,e}(2\varepsilon)>
d^u_\varepsilon-\frac{3\varepsilon}{(\lambda_m^u)^2}.$$ We obtain
$\displaystyle{\lim_{\varepsilon\to0}d^{u,e}_\varepsilon\geq\lim_{\varepsilon\to0}d^u_\varepsilon}.$
From this and  Lemma \ref{complete},
$L^u(\beta)\geq \displaystyle{\lim_{\varepsilon\to0}d^u_\varepsilon}$
follows.
\medskip



\noindent{\it Step2(Upper estimate).} 
From Lemma \ref{new}, the unstable Lyapunov exponents are undefined for points in $\Omega_*$. Hence 
$$\Omega^u(\beta)=\bigcup_{m=0}^\infty \Omega^u(\beta)\cap G_m.$$
From the next Lemma and the countable stability of $\dim_H^u$,
we obtain $L^u(\beta)\leq \displaystyle{\lim_{\varepsilon\to0}d^u_\varepsilon}$.

\begin{prop}\label{c}
 For any $\beta\in I$ and every  $m\geq0$,
$\dim_H^u(\Omega^u(\beta)\cap G_m)\leq \displaystyle{\lim_{\varepsilon\to0}d^u_\varepsilon}.$
\end{prop}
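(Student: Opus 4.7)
The strategy is a standard multifractal upper bound, using the symbolic horseshoe of Lemma~\ref{hyp} to convert the combinatorics of proper rectangles into an invariant measure whose entropy--exponent ratio is controlled by $d^u_\varepsilon$.

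Fix $\varepsilon\in(0,\lambda_m^u/4)$ and filter $\Omega^u(\beta)\cap G_m$ by the speed of Birkhoff convergence: for $k\geq 1$,
$$A_k=\Bigl\{x\in\Omega^u(\beta)\cap G_m\colon \Bigl|\textstyle\sum_{i=0}^{n-1}\log J^u(f^ix)-n\beta\Bigr|<\varepsilon n\ \forall n\geq k\Bigr\}.$$
Then $A_k\nearrow\Omega^u(\beta)\cap G_m$, so by countable stability of $\dim_H^u$ it suffices to bound $\dim_H^u(A_k)$ by $d^u_{3\varepsilon}(1+O(\varepsilon))$ for each fixed $k$; letting $\varepsilon\to0$ then yields the claim.

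For $T$ much larger than $k$ and $m$, let $\mathcal N(T)=\{\omega\in\bigcup_n\mathcal P_n\colon\tau(\omega)=T,\ \omega\cap A_k\neq\emptyset\}$. Each $\omega\in\mathcal N(T)$ meets $G_m$ and has $\tau(\omega)>m$, so Lemma~\ref{lyapbdd} applies with distortion constant $D_m$; combined with Lemma~\ref{global}(a)--(b) and the fact that $f^T(\omega\cap\gamma^u)\in\tilde\Gamma^u$ has length of order one,
$$\mathrm{length}(\omega\cap\gamma^u)\leq C D_m\, e^{-T(\beta-\varepsilon)}.$$
To bound $\#\mathcal N(T)$ from above, apply Lemma~\ref{hyp} with $\mathcal A=\mathcal N(T)$ and $\ell=\#\mathcal N(T)$: push the uniform Bernoulli measure on $\Sigma_\ell$ through $\pi$, then use Abramov's formula to obtain an $f$-invariant probability measure $\mu_T$ with $h(\mu_T)=(\log\ell)/T$. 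Selecting a representative $x_\omega\in\omega\cap A_k$ for each $\omega$, Lemma~\ref{lyapbdd} propagates the Birkhoff estimate from $x_\omega$ to every $y\in\omega$, so $|\lambda^u(\mu_T)-\beta|\leq\varepsilon+O(1/T)\leq 2\varepsilon$ for $T$ large. Thus $\mu_T$ is admissible in the definition of $d^u_{3\varepsilon}$, giving
$$\tfrac{1}{T}\log\#\mathcal N(T)=h(\mu_T)\leq d^u_{3\varepsilon}\cdot\lambda^u(\mu_T)\leq d^u_{3\varepsilon}(\beta+2\varepsilon).$$

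Combining the length and counting bounds, the $p$-th power Hausdorff sum over $\bigcup_{T\geq T_0}\mathcal N(T)$ is bounded by
$$\sum_{T\geq T_0}(CD_m)^p\exp\bigl[T\bigl(d^u_{3\varepsilon}(\beta+2\varepsilon)-p(\beta-\varepsilon)\bigr)\bigr],$$
which converges whenever $p>d^u_{3\varepsilon}(\beta+2\varepsilon)/(\beta-\varepsilon)$. Once one verifies that this family (after disjointification by keeping only $\subset$-maximal elements) covers $A_k$ at arbitrarily fine scale, one obtains $\dim_H^u(A_k)\leq d^u_{3\varepsilon}(\beta+2\varepsilon)/(\beta-\varepsilon)$, whence $\varepsilon\to0$ proves the lemma.

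The main obstacle is precisely the final covering assertion: since the inducing time function $r$ is unbounded on $\mathcal P_1$, the sequence $\tau(\omega_n(x))$ for $x\in A_k$ can skip past any prescribed $T$ in a single jump, so the slice $\mathcal N(T)$ at exact $\tau=T$ may fail to meet every $x\in A_k$. The standard remedies are either (i) to thicken $\mathcal N(T)$ to the dyadic window $T\leq\tau(\omega)<2T$, absorbing the resulting factor into the constants and then reoptimising over $\varepsilon$, or (ii) to further refine $A_k$ using Lemma~\ref{globals}(c) together with the defining inequality of $G_m$ to control the jump $\tau_{n+1}(x)-\tau_n(x)$ by a sub-linear fraction of $\tau_n(x)$ for $n$ large, which forces every $x\in A_k$ to be caught by some $\omega\in\mathcal N(T')$ with $T'\in[T,(1+\varepsilon)T]$.
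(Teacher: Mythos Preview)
Your route differs from the paper's but is essentially sound; the irony is that the ``main obstacle'' you flag is not an obstacle at all. You are already summing over \emph{all} $T\ge T_0$, and for Hausdorff dimension that is all you need: once you restrict (as the paper does) to points on $\gamma^u(\zeta_0)$ that return to $\Theta$ infinitely often, every such $x$ lies in a nested sequence $\omega_n(x)\in\mathcal P_n$ with $\tau(\omega_n(x))\to\infty$, so for any $T_0$ there is some $n$ with $\tau(\omega_n(x))\ge T_0$ and $\omega_n(x)\in\mathcal N(\tau(\omega_n(x)))$. Thus $\bigcup_{T\ge T_0}\mathcal N(T)$ covers at scale $\le CD_me^{-T_0(\beta-\varepsilon)}$, and your $p$-sum, being the tail of a convergent series, tends to zero. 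No ``remedy'' is needed; in particular remedy~(ii) is false as stated (the $G_m$ condition only bounds the jump by a constant multiple $C_b\tau_n$ with $C_b\sim|\log b|$, not a sub-linear fraction), but this is irrelevant. One genuine omission: you must first perform the paper's reduction to $\tilde\Omega^u(\beta)\subset\gamma^u(\zeta_0)$, since points of $A_k$ as you defined it need not lie in $\Theta$ at all, and those with only finitely many returns to $\Theta$ are not covered by proper rectangles of high depth; this reduction is routine (countable stability plus the fundamental-domain property of $\gamma^u(\zeta_0)$).

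For comparison, the paper avoids the whole issue by indexing over the Markov level $n$ rather than the inducing time: it works with $\mathcal A_{n,\varepsilon}\subset\mathcal P_n$, where the covering is automatic since every point with infinite returns lies in a unique element of each $\mathcal P_n$. The price is that inducing times within $\mathcal A_{n,\varepsilon}$ vary, so the paper cannot separate length from count as you do; instead it bounds the weighted sum $\sum_{\omega}\mathrm{length}(\omega^u)^{d^u_\varepsilon}$ directly by building an equilibrium-type measure on the associated horseshoe (via periodic orbits weighted by $\exp(S_k\Phi)$) and invoking $h(\mu)-d^u_\varepsilon\lambda^u(\mu)\le0$. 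Your argument is more elementary---uniform Bernoulli plus Abramov in place of the pressure machinery---and this works precisely because fixing $\tau=T$ makes the tower height constant. Either approach gives the lemma.
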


\noindent{\it Proof of Lemma \ref{c}.}
Recall that $\gamma^u(\zeta_0)$ is the unstable side of $\Theta$ containing $\zeta_0$.
Set
$$\tilde\Omega^u(\beta)=\{x\in\Omega^u(\beta)\cap \gamma^u(\zeta_0)\colon f^nx\in\Theta\text{ for infinitely many $n>0$}\}.$$
Since $\gamma^u(\zeta_0)$ contains a fundamental domain in $W^u$,
for any $x\in \Omega^u(\beta)$ which is not the fixed point in $W^u$ there exists $n\in\mathbb Z$ such that
$f^nx\in\gamma^u(\zeta_0)$. 
From the countable stability and the $f$-invariance
of $\dim_H^u$,
$L^u(\beta)=
\dim_H ^u(\Omega^u(\beta)\cap\gamma^u(\zeta_0))$.
Since points in $\Omega^u(\beta)\cap\gamma^u(\zeta_0)$ which return to $\Theta$ under forward iteration
only finitely many times form a countable subset,
we have $L^u(\beta)=\dim_H^u (\tilde\Omega^u(\beta))$.


From this point on,
we restrict ourselves to $\tilde\Omega^u(\beta)$.
 For $c>0$ let $D_c(\zeta_0)$ denote the closed ball in $W^u$ of
  radius $r$ about $\zeta_0$. Define
$$\mathcal A_{n,\varepsilon}=\left\{
 \omega\in\mathcal P_{n}\colon \omega\cap G_m\neq\emptyset,\ \omega\cap
 D_c(\zeta_0)=\emptyset,\
\inf_{x\in\omega\cap\gamma^u(\zeta_0)}\left|\frac{1}{\tau(\omega)}\sum_{i=0}^{\tau(\omega)-1}\log J^u(f^ix)-\beta\right|<\frac{\varepsilon}{2}\right\}.$$
Observe that $\mathcal A_{n,\varepsilon}$ is a finite set, because its elements 
do not intersect $D_c(\zeta_0)$.
For each $\omega\in\mathcal A_{n,\varepsilon}$
write $\omega^u=\omega\cap\gamma^u(\zeta_0)$ and set
$\mathcal A^u_{n,\varepsilon}=\{\omega^u\colon\omega\in\mathcal A_{n,\varepsilon}\}$.
Clearly we have
$$(\tilde\Omega^u(\beta)\cap G_m)\setminus D_c(\zeta_0)\subset\limsup_{n\to\infty}\bigcup_{
\omega^u\in\mathcal A_{n,\varepsilon}^u}\omega^u.$$
It is enough to show
\begin{equation}\label{pperd}
\limsup_{n\to\infty}\frac{1}{n}\log\sum_{\omega^u\in\mathcal A^u_{n,\varepsilon}}{\rm length}(\omega^u)^{ d^u_\varepsilon}\leq 0\ \text{ for any $\varepsilon>0$.}
\end{equation}
Indeed, if this holds, then
using ${\rm length}(\omega^u)\leq e^{-\lambda n}$ from Lemma \ref{globals}(b),
for any $\rho>0$ we have
$$\limsup_{n\to\infty}\frac{1}{n}\log\sum_{A\in\mathcal A^u_{n,\varepsilon}}{\rm length}(\omega^u)^{ d^u_\varepsilon+\rho}\leq-\lambda\rho.$$
It follows 
that $\sum_{A\in\mathcal A^u_{n,\varepsilon}}{\rm length}(\omega^u)^{ d^u_\varepsilon+\rho}$
has a negative growth rate as $n$ increases.
Therefore
the Hausdorff $( d^u_\varepsilon+\rho)$-measure of the set $(\tilde\Omega^u(\beta)\cap G_m)\setminus
D_c(\zeta_0)$
is $0$. Since $\rho>0$ is arbitrary,
$\dim_H^u((\tilde\Omega^u(\beta)\cap G_m)\setminus D_c(\zeta_0))\leq d^u_\varepsilon$, and 
by the countable stability of $\dim_H^u$ we obtain
$\dim_H^u(\tilde\Omega^u(\beta)\cap G_m)\leq d^u_\varepsilon$.
 Letting
$\varepsilon\to0$ yields the desired inequality in Lemma \ref{c}. 
\medskip

It is left to prove \eqref{pperd}.
Set $\ell=\#\mathcal A_{n,\varepsilon}$ and
Write
$\mathcal A_{n,\varepsilon}=\{\omega{(1)},\omega{(2)},\ldots,\omega{(\ell)}\}$
so that 
\begin{equation}\label{align}
\tau(\omega{(1)})\geq\tau(\omega{(s)})>m\ \text{ for every }s\in \{1,2,\ldots,t\}.\end{equation}
Let $\pi_{\ell}\colon\Sigma_\ell\to\bigcup_{\omega\in\mathcal A_{n,\varepsilon}}\omega$ denote the coding map defined in Sect.\ref{horse}
and  $\sigma\colon\Sigma_\ell\circlearrowleft$ the left shift.
Define 
$$B=\{\underline{a}\in\Sigma_\ell\colon\pi\underline{a}\subset W^s(P)\setminus\{P\}\}.$$
Proper rectangles can intersect each other only at their stable sides, and there is only one proper rectangle containing $P$
in its stable side. Hence,
for any $\underline{a}\in \Sigma_\ell\setminus B$ there exists a unique element 
of $\mathcal A_{n,\varepsilon}$ containing $\pi\underline{a}$ which we denote by
$\omega(\underline{a})$.
Define 
$\Phi\colon \Sigma_\ell\setminus B\to\mathbb R$ by
$$\Phi(\underline{a})=- d^u_\varepsilon\sum_{i=0}^{\tau(\omega(\underline{a}))-1}\log J^u(f^i(\pi\underline{a})).$$
Since $\pi(\Sigma_\ell)\subset\Omega\setminus\{Q\}$ and $\log J^u$ is continuous except at $Q$,
$\Phi$ is continuous.

Let
 $\mathcal M(\sigma)$ denote the space of $\sigma$-invariant Borel probability measures on $\Sigma_\ell$
 endowed with the topology of weak convergence.
For each $k\geq 1$ define an atomic probability measure $\nu_k\in\mathcal M(\sigma)$ concentrated on 
the set $E_k=\{\underline{a}\in \Sigma_\ell\colon \sigma^{k}\underline{a}=\underline{a}\}$ by
$$\nu_k=  
\left(\sum_{\underline{b}\in E_k}\exp\left(     S_{k}\Phi(\underline{b})            \right)\right)^{-1}\sum_{\underline{a}\in E_k}
\exp\left(S_{k}\Phi(\underline{a})\right)
\delta_{\underline{a}},$$
where $S_{k}\Phi=\sum_{i=0}^{k-1}\Phi\circ\sigma^i$ and
$\delta_{\underline{a}}$ denotes the Dirac measure at $\underline{a}$.
Let $\nu_0$ denote an accumulation point of the 
sequence $\{\nu_k\}_k$ in $\mathcal M(\sigma)$. Taking
a subsequence if necessary we may assume $\nu_k\to\nu_0$.
We have $\nu_0\in\mathcal M(\sigma)$.

\begin{sublemma}\label{bounded}
For any $\nu\in\mathcal M(\sigma)$, $\nu(B)=0$.
\end{sublemma}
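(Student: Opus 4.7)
The plan is to argue by contradiction: assume some $\sigma$-invariant $\nu$ places positive mass on $B$, and deduce that $\nu$ must in fact be a Dirac mass at a point outside $B$.

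\textbf{First step.} By the ergodic decomposition of $\nu$ it suffices to treat the case where $\nu$ is ergodic. I would then verify that $B$ is $\sigma$-invariant in the measure-theoretic sense, i.e.\ $\sigma^{-1}(B) = B$. This uses the intertwining identity $\pi\circ\sigma = f^{\tau(\omega(a_0))}\circ\pi$ (a direct consequence of the inducing construction), the $f$-invariance of $W^s(P)$ as a set, and the injectivity of $\pi$ from Lemma \ref{hyp} (so that the only preimage of $P$ is a single sequence). Ergodicity then yields $\nu(B) \in \{0,1\}$, and we may suppose $\nu(B) = 1$.

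\textbf{Second step.} For any $\underline{a} \in B$ I would track the forward $\sigma$-orbit. Writing $T_n = \tau(\omega(a_0)) + \cdots + \tau(\omega(a_{n-1}))$, the relation $\pi(\sigma^n\underline{a}) = f^{T_n}(\pi\underline{a})$ combined with $\pi\underline{a}\in W^s(P)$ and $T_n\to\infty$ gives $\pi(\sigma^n\underline{a})\to P$ in $\mathbb R^2$. Compactness of $\pi(\Sigma_\ell)$ forces $P\in\pi(\Sigma_\ell)$ (otherwise $B=\emptyset$ and there is nothing to prove), and by injectivity of $\pi$ there is a unique $\underline{p}\in\Sigma_\ell$ with $\pi\underline{p}=P$. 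Applying $\pi$ to $\sigma\underline{p}$ yields $\pi(\sigma\underline{p})=f^{\tau(\omega(p_0))}(P)=P$, so injectivity of $\pi$ gives $\sigma\underline{p}=\underline{p}$. Since $\pi$ is a continuous injection between compact Hausdorff spaces, it is a homeomorphism onto its image; hence $\pi^{-1}$ is continuous on $\pi(\Sigma_\ell)$ and one concludes $\sigma^n\underline{a}\to\underline{p}$ in $\Sigma_\ell$ for every $\underline{a}\in B$.

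\textbf{Third step.} Fix a continuous function $\varphi\colon\Sigma_\ell\to[0,1]$ that vanishes only at $\underline{p}$ (the distance to $\underline{p}$ in any compatible metric will do). For $\nu$-a.e.\ $\underline{a}$, which lies in $B$ by the working assumption, the convergence $\sigma^n\underline{a}\to\underline{p}$ gives $\varphi(\sigma^n\underline{a})\to 0$, so the Cesàro averages vanish in the limit. Birkhoff's ergodic theorem then forces $\int\varphi\,d\nu=0$, whence $\nu=\delta_{\underline{p}}$. But $\underline{p}\notin B$ because $\pi\underline{p}=P$, so $\nu(B)=0$, contradicting the working assumption.

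\textbf{Main obstacle.} The delicate part is the second step: ensuring that the forward $\sigma$-orbit of every point in $B$ converges to one and the same fixed sequence $\underline{p}$ inside $\Sigma_\ell$, as opposed to merely accumulating somewhere. Both the existence of $\underline{p}$ and the continuity of $\pi^{-1}$ rest on Lemma \ref{hyp}; once these are in hand, the remainder of the argument is a standard application of Birkhoff's theorem to a suitable test function.
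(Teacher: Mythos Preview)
Your argument is correct, but it takes a noticeably longer route than the paper. The paper's proof is a three-line wandering-set argument: since $\pi(B)\subset W^s(P)\setminus\{P\}$ and successive $\sigma$-iterates push points strictly along $W^s(P)$ toward $P$, one can choose $A\subset B$ with $\nu(A)>0$ and $\pi(A)\cap\pi(\sigma^n A)=\emptyset$ for all $n>0$; injectivity of $\pi$ then makes the sets $\sigma^n A$ pairwise disjoint, each of $\nu$-measure $\nu(A)>0$ (using that $\sigma$ is invertible), which is incompatible with $\nu$ being a probability. No ergodic decomposition, no Birkhoff, no identification of a fixed symbolic point is needed.

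Your approach instead passes through ergodicity, establishes $\sigma$-invariance of $B$, shows that every $\sigma$-orbit in $B$ converges to a single fixed sequence $\underline{p}$ with $\pi\underline{p}=P$, and then uses Birkhoff averages of a test function to force $\nu=\delta_{\underline{p}}\notin B$. This is more machinery than necessary, but it does yield the extra information that the only $\sigma$-invariant measure that could possibly see $B$ is the Dirac at $\underline{p}$. One small remark: your justification of $\sigma^{-1}(B)=B$ does not actually need the injectivity of $\pi$; it follows directly from the intertwining $\pi\circ\sigma=f^{\tau(\omega_{a_0})}\circ\pi$ together with $f$-invariance of $W^s(P)$ and the fact that $f^{-1}(\{P\})=\{P\}$. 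Injectivity of $\pi$ is only genuinely used in your second step, to promote $\pi$ to a homeomorphism onto its image and pull the convergence $\pi(\sigma^n\underline a)\to P$ back to $\Sigma_\ell$.
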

\begin{proof}
If $\nu(B)>0,$ then since $\pi(B)\subset W^s(P)\setminus\{P\}$ one can choose a set $A\subset B$
such that $\nu(A)>0$ and $\pi(A)\cap \pi(\sigma^nA)=\emptyset$ for every $n>0$.
Since $\nu(\sigma ^nA)=\nu(A)$, $\nu$ cannot be a probability, a contradiction.
\end{proof}

Define a Borel probability measure $\overline{\mu}$ on $\pi(\Sigma_\ell)$ by 
$$\overline{\mu}=\sum_{\omega\in\mathcal A_{n,\varepsilon}}\nu_0|_{\pi^{-1}\omega}.$$
By Sublemma \ref{bounded}, $\overline{\mu}$ is indeed a probability.
Define $\mu\in\mathcal M(f)$ by
 $$\mu=\left(\sum_{\omega\in\mathcal A_{n,\varepsilon}}
 \tau(\omega)\overline{\mu}(\omega)\right)^{-1}\sum_{\omega\in\mathcal A_{n,\varepsilon}}\sum_{i=0}^{\tau(\omega)-1}(f^i)_*(\overline{\mu}|_{\omega}).$$

\begin{sublemma}\label{minus}
$h(\mu)- d^u_\varepsilon\lambda^u(\mu)\leq0$.
\end{sublemma}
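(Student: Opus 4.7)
The plan is to check that the measure $\mu$ constructed from $\nu_0$ is a bona fide element of $\mathcal{M}(f)$ with $|\lambda^u(\mu)-\beta|<\varepsilon$ (provided $n$ is chosen sufficiently large), and then to apply the supremum \eqref{Feps} defining $d^u_\varepsilon$ directly. No pressure/variational-principle computation is needed for the sublemma itself.

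First, I would verify that $\mu$ is a well-defined $f$-invariant Borel probability measure. Sublemma \ref{bounded} gives $\nu_0(B)=0$, so $\nu_0$ is concentrated on $\Sigma_\ell\setminus B$, where $\pi$ is injective (Lemma \ref{hyp}); hence $\overline\mu=\pi_*\nu_0$ is a Borel probability on $\bigcup_{\omega\in\mathcal A_{n,\varepsilon}}\omega$. Because $\mathcal A_{n,\varepsilon}$ is finite and each $\tau(\omega)$ is finite, the normalizer $N=\sum_\omega\tau(\omega)\overline\mu(\omega)$ is finite and positive. Each $\nu_k$ is $\sigma$-invariant (since $S_k\Phi$ is constant on $\sigma$-orbits within $E_k$), so $\nu_0$ is $\sigma$-invariant. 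Since $\pi$ conjugates $\sigma$ to the first-return map $f^\tau$ on $\bigcup_\omega\omega$, the standard Kakutani--Abramov skyscraper construction produces an $f$-invariant $\mu$.

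Second, I would estimate $\lambda^u(\mu)$. Unfolding the definition of $\mu$,
\[
\lambda^u(\mu)=\frac{1}{N}\sum_{\omega\in\mathcal A_{n,\varepsilon}}\int_\omega\sum_{i=0}^{\tau(\omega)-1}\log J^u(f^ix)\,d\overline\mu(x).
\]
For each $\omega\in\mathcal A_{n,\varepsilon}$, its defining condition supplies a reference point on $\omega\cap\gamma^u(\zeta_0)$ whose $\tau(\omega)$-step Birkhoff average of $\log J^u$ lies within $\varepsilon/2$ of $\beta$. The distortion estimate along unstable leaves (Lemma \ref{global}(b)), combined with Lemma \ref{lyapbdd} (applicable since $\omega\cap G_m\neq\emptyset$ and $\tau(\omega)\geq 2n>m$ by Lemma \ref{globals}(a), once $n$ is large relative to $m$), transfers this estimate to every $y\in\omega\cap\Omega$ at a cost of at most $\log D_m$ in the unnormalized Birkhoff sum. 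Integrating against $\overline\mu$ and summing over $\omega$ then yields
\[
|\lambda^u(\mu)-\beta|\leq\frac{\varepsilon}{2}+\frac{\log D_m}{N}\leq\frac{\varepsilon}{2}+\frac{\log D_m}{2n}<\varepsilon,
\]
the last inequality valid once $n$ is large enough that $\log D_m/(2n)<\varepsilon/2$, which we may assume.

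Finally, since $\lambda^u(\mu)\geq\lambda_m^u>0$ by \cite{CLR08}, division is legitimate and $\mu$ is an admissible competitor in the supremum defining $d^u_\varepsilon$. Therefore $h(\mu)/\lambda^u(\mu)\leq d^u_\varepsilon$, i.e.\ $h(\mu)-d^u_\varepsilon\lambda^u(\mu)\leq 0$. The main obstacle, in my view, is not any individual estimate but keeping the Kakutani--Abramov bookkeeping straight: one must verify that $\overline\mu$ is genuinely a probability (using Sublemma \ref{bounded}) and that the $\sigma$-invariance of $\nu_0$ propagates to $f$-invariance of $\mu$ through the non-constant ceiling $\tau$. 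Once this is arranged, the Lyapunov estimate of the second step is essentially built into the definition of $\mathcal A_{n,\varepsilon}$ and amounts to a single application of bounded distortion.
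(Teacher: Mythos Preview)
Your proposal is correct and follows essentially the same route as the paper: show $|\lambda^u(\mu)-\beta|<\varepsilon$ by transferring the defining $\varepsilon/2$-estimate on $\omega\cap\gamma^u(\zeta_0)$ to all of $\omega\cap\Omega$ via the bounded distortion Lemma~\ref{lyapbdd} (with $\tau(\omega)\geq 2n$ from Lemma~\ref{globals}(a)), and then invoke the definition of $d^u_\varepsilon$. The paper's own proof does exactly this pointwise on each $\omega$, whereas you phrase it through the integrated expression for $\lambda^u(\mu)$ and the normalizer $N\geq 2n$; the additional Kakutani--Abramov bookkeeping you spell out is left implicit in the paper but is indeed what underlies the construction of $\mu$.
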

\begin{proof}
From the definition of $d^u_\varepsilon$ in \eqref{Feps} it suffices to show 
$ |\lambda^u(\mu)-\beta|<\varepsilon.$
Let $\omega\in\mathcal A_{n,\varepsilon}$ and
$x\in \omega$. Choose
$y\in \omega\cap\gamma^u(\zeta_0)$ such that 
$$\left|\frac{1}{\tau(\omega)}\sum_{i=0}^{\tau(\omega)-1}\log J^u(f^iy)-\beta\right|<\frac{\varepsilon}{2}.$$
Then we have
\begin{align*}\left|\frac{1}{\tau(\omega)}\sum_{i=0}^{\tau(\omega)-1}\log J^u(f^ix)-\beta\right|\leq&
\left| \frac{1}{\tau(\omega)}\sum_{i=0}^{\tau(\omega)-1}\log J^u(f^ix)-\log J^u(f^iy)\right|+
 \left|\frac{1}{\tau(\omega)}\sum_{i=0}^{\tau(\omega)-1}\log J^u(f^iy)-\beta\right|\\
 \leq& \frac{\log D_m}{\tau(\omega)}+\frac{\varepsilon}{2}\leq \frac{\log D_m}{2n}+\frac{\varepsilon}{2}   <\varepsilon.\end{align*}
The upper bound of the first summand follows from Lemma \ref{lyapbdd}.
The third inequality follows from $\tau(\omega)\geq 2n$ in Lemma \ref{globals}(a).
The last one holds for sufficiently large $n$.
Since $\omega\in\mathcal A_{n,\varepsilon}$ and $x\in\omega$ are arbitrary, this implies
 $ |\lambda^u(\mu)-\beta|<\varepsilon$.
 \end{proof}
 

Observe that 
\begin{equation}\label{observe2}\nu_k(\{\underline{a}\})
=\left(\sum_{\underline{b}\in E_k}\exp\left(     S_{k}\Phi(\underline{b})            \right)\right)^{-1}
\exp\left(S_{k}\Phi(\underline{a})\right)
\quad \forall \underline{a}\in E_k.\end{equation}
Hence
\begin{align*}\sum_{\underline{a}\in E_k}\nu_k(\{\underline{a}\})S_{k}\Phi(\underline{a})
&=\sum_{\underline{a}\in E_k}\nu_k(\{\underline{a}\}) \sum_{i=0}^{k-1}\delta_{\sigma^i\underline{a}}(\Phi)\\
&=\left(\sum_{\underline{b}\in E_k}\exp\left(     S_{k}\Phi(\underline{b})            \right)\right)^{-1}
\sum_{\underline{a}\in E_k}\exp\left(S_{k}\Phi(\underline{a})\right) \sum_{i=0}^{k-1}\delta_{\sigma^i\underline{a}}(\Phi)
\\
&=k\int \Phi d\nu_k,\end{align*}
and
\begin{align*}
-\sum_{\underline{a}\in E_k}\nu_k(\{\underline{a}\})
\log\nu_k(\{\underline{a}\})+k\nu_k(\Phi)&=\sum_{\underline{a}\in E_k}\nu_k(\{\underline{a}\})\left(-\log\nu_k(\{\underline{a}\})+
S_{k}\Phi(\underline{a})\right)\\
&=\log\sum_{\underline{a}\in E_k}\exp(S_{k}\Phi(\underline{a})),
\end{align*}
where the last equality follows from taking logs of \eqref{observe2},
rearranging and summing the result for all $\underline{a}\in E_k$.
A slight modification of the argument in \cite[pp.220]{Wal82} shows that for
any integer $p$ with $1\leq p<k$,
\begin{equation}\label{x}
\frac{1}{k}\log\sum_{\underline{a}\in E_k}\exp(S_{k}\Phi(\underline{a}))\leq-\frac{1}{p}\sum_{\underline{a}\in E_p}\nu_k(\{\underline{a}\})
\log\nu_k(\{\underline{a}\})+\nu_k(\Phi)
+\frac{2p\log\#E_p}{k}.\end{equation} 
\begin{sublemma}\label{kconv}
$\int\Phi d\nu_k\to\int\Phi d\nu_0$ as $k\to\infty$.
\end{sublemma}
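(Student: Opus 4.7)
The plan is to apply a portmanteau-type theorem for bounded, $\nu_0$-almost-everywhere continuous functions. The function $\Phi$ is not continuous on all of $\Sigma_\ell$: it may jump on $B$, where $\pi(\underline{a})$ sits on a shared stable boundary of two elements of $\mathcal{A}_{n,\varepsilon}$, so weak convergence $\nu_k\to\nu_0$ alone does not immediately deliver the conclusion. The key additional input is Sublemma \ref{bounded}, which gives $\nu_0(B)=0$.

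First I would verify that $\Phi$ is uniformly bounded on $\Sigma_\ell\setminus B$. The finiteness of $\mathcal{A}_{n,\varepsilon}$ bounds $\tau(\omega(\underline{a}))$ uniformly, while $\log J^u$ is bounded on the compact set $\Omega$ (continuous on $\Omega\setminus\{Q\}$ and finite at $Q$). Next I would check continuity of $\Phi$ on $\Sigma_\ell\setminus B$: for $\underline{a}\notin B$, $\pi(\underline{a})$ lies in the interior of a unique $\omega(\underline{a})\in\mathcal{A}_{n,\varepsilon}$, otherwise it would be contained in a shared stable side, i.e.\ in $W^s(P)\setminus\{P\}$. By continuity of $\pi$ (Lemma \ref{hyp}), the assignment $\underline{b}\mapsto\omega(\underline{b})$ and hence $\tau(\omega(\underline{b}))$ is locally constant at $\underline{a}$, and continuity of $\log J^u$ away from $Q$ then gives continuity of $\Phi$ at $\underline{a}$. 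Extending $\Phi$ arbitrarily on $B$ as a bounded Borel function, its discontinuity set is contained in $B$ and is therefore $\nu_0$-negligible.

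I also note that $\nu_k(B)=0$ for every $k$: any $\underline{a}\in E_k\cap B$ would force $\pi(\underline{a})$ to be a periodic point of $f$ in $W^s(P)\setminus\{P\}$, which is impossible because the only periodic point of $f$ on $W^s(P)$ is $P$ itself. Consequently both $\int\Phi\,d\nu_k$ and $\int\Phi\,d\nu_0$ are well defined as Lebesgue integrals of bounded Borel functions, and the desired convergence follows by a standard approximation: choose a bounded continuous $\psi$ that agrees with $\Phi$ outside a small open neighborhood $U\supset B$ with $\nu_0(\partial U)=0$, apply weak convergence to $\psi$, and control the remainder by $2\|\Phi\|_\infty\,\nu_k(U)$ together with $\nu_k(U)\to\nu_0(U)$. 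The main obstacle throughout is precisely the discontinuity of $\Phi$ across $B$; Sublemma \ref{bounded} is exactly what neutralizes it.
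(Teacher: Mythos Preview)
Your approach is correct and essentially the same as the paper's: both use that $\Phi$ is bounded and continuous on $\Sigma_\ell\setminus B$ together with $\nu_0(B)=0$ from Sublemma~\ref{bounded}, then carry out a standard approximation to pass from weak convergence to convergence of $\int\Phi$. The paper implements this by choosing a clopen compact $K\subset\Sigma_\ell\setminus B$ with $\nu_0(K^c)<\varepsilon$ and splitting the integral over $K$ and $K^c$, which is exactly your ``small open neighborhood $U\supset B$'' step with $U=K^c$ (note that in the Cantor space $\Sigma_\ell$ one may take $U$ clopen, so $\partial U=\emptyset$); your additional remark that $\nu_k(B)=0$ since periodic orbits cannot lie in $W^s(P)\setminus\{P\}$ is a clean detail the paper leaves implicit.
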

\begin{proof}
Set $B^c= \Sigma_\ell\setminus B$.
For any $\varepsilon>0$
choose a compact set $K\subset B^c$ such that 
$\nu_0(B^c\setminus K)<\varepsilon.$
Since the set
$\Sigma_\ell\setminus K$ is open and closed, and $\nu_0(B\setminus K)=0$ by Sublemma \ref{bounded},
$\displaystyle{\lim_{k\to\infty}\nu_k(\Sigma_\ell\setminus K)}=\nu_0(\Sigma_\ell\setminus K)=\nu_0(B\setminus K)+\nu_0(B^c\setminus K)=\nu_0(B^c\setminus K)<\varepsilon.$
Hence, for sufficiently large $k$,
$$|\int\Phi d\nu_k-\int \Phi d\nu_0|\leq\left|\int_{K}\Phi d\nu_k-\int_{K}\Phi d\nu_0\right|+
\left|\int_{\Sigma_\ell\setminus K}\Phi d\nu_k-\int_{\Sigma_\ell\setminus K}\Phi d\nu_0\right|<\varepsilon\left(1+\sup_{\underline{a}\in\Sigma_\ell}|\Phi(\underline{a})|\right).\qedhere$$
\end{proof}

Letting $k\to\infty$ and then using Sublemma \ref{kconv},
\begin{equation*}
\limsup_{k\to\infty}\frac{1}{k}\log\sum_{\underline{a}\in E_k}\exp(S_{k}\Phi(\underline{a}))\leq
-\frac{1}{p}\sum_{\underline{a}\in E_p}\nu_0(\{\underline{a}\})
\log\nu_0(\{\underline{a}\})+\int \Phi d\nu_0.
\end{equation*}
Letting $p\to\infty$ we get
\begin{equation}\label{lem2}
\limsup_{k\to\infty}\frac{1}{k}\log\sum_{\underline{a}\in E_k}\exp(S_{k}\Phi(\underline{a}))\leq
h(\sigma;\nu_0)+\int\Phi d\nu_0,
\end{equation}
where $h(\sigma;\nu_0)$ denotes the entropy of $\nu_0\in\mathcal M(\sigma)$.
We estimate the left-hand-side of \eqref{lem2} from below.

\begin{sublemma}\label{mary1}
Let $\underline{a}=\{a_i\}_{i\in\mathbb Z}\in E_k$ be such that $a_0=1$. Then:

\begin{itemize}

\item[(a)] $\frac{\exp(S_{k-1}\Phi(\underline{a}))}{   \exp(S_{k-1}\Phi(\underline{b})) }\geq D_m^{-d^u_\varepsilon}$
for every $\underline{b}=\{b_i\}_{i\in\mathbb Z}\in\Sigma_\ell$ such that
$a_i=b_i$ for every $0\leq i< k-1$.

\item[(b)] $\frac{\exp(S_{0}\Phi(\underline{a}))}{{\rm length}(\omega^u(a_{k-1}))^{ d^u_\varepsilon}}\geq D_m^{-2d^u_\varepsilon}$.
\end{itemize}

\end{sublemma}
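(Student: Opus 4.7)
The plan is as follows. For part (a), note first that the Birkhoff sum telescopes: setting $T_{k-1}=\tau(\omega_{a_0})+\cdots+\tau(\omega_{a_{k-2}})$, one has
\[
S_{k-1}\Phi(\underline{a})=-d^u_\varepsilon\sum_{j=0}^{T_{k-1}-1}\log J^u(f^j\pi\underline{a})=-d^u_\varepsilon\log\|D_{\pi\underline{a}}f^{T_{k-1}}|E^u_{\pi\underline{a}}\|,
\]
and likewise for $\underline{b}$. Since $a_i=b_i$ for $0\le i<k-1$, both $\pi\underline{a}$ and $\pi\underline{b}$ lie in the compound proper rectangle
\[
\omega=\omega_{a_0}\cap f^{-\tau(\omega_{a_0})}\omega_{a_1}\cap\cdots\cap f^{-(\tau(\omega_{a_0})+\cdots+\tau(\omega_{a_{k-3}}))}\omega_{a_{k-2}}
\]
whose inducing time is exactly $T_{k-1}$. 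Thus (a) reduces to a single application of the distortion bound in Lemma \ref{lyapbdd} to the rectangle $\omega$, raised to the $d^u_\varepsilon$-power.

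To legitimately invoke Lemma \ref{lyapbdd} on $\omega$ I must verify $\tau(\omega)>m$ and $\omega\cap G_m\ne\emptyset$. The former is immediate from $\tau(\omega)\geq\tau(\omega_{a_0})\geq 2n>m$ via Lemma \ref{globals}(a). For the latter I would propagate the $G_m$-intersection through the composition: each constituent $\omega_{a_i}\in\mathcal A_{n,\varepsilon}$ satisfies $\omega_{a_i}\cap G_m\ne\emptyset$ and $\tau(\omega_{a_i})>m$, so Lemma \ref{nonint} applied block-by-block implies $d_{\rm crit}(f^n x)>b^{n/9}$ for all $m\leq n\leq T_{k-1}-1$ and every $x\in\omega$. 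Combining with Lemma \ref{behave} to obtain an enclosing long stable leaf and then Lemma \ref{sr} produces a point of $G_m$ in $\omega$ (or in a nearby iterate from which one shifts back), so that Lemma \ref{lyapbdd} yields the desired single $D_m$ rather than the naive step-by-step product $D_m^{k-1}$.

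For part (b), the idea is purely geometric. By Lemma \ref{global}(a) the image $f^{\tau(\omega_{a_{k-1}})}\omega^u(a_{k-1})$ is a curve in $\tilde\Gamma^u$ with length uniformly bounded below and above by constants independent of $n$ and $k$. The Mean Value Theorem together with Lemma \ref{global}(b) and Lemma \ref{lyapbdd} then gives
\[
{\rm length}(\omega^u(a_{k-1}))\asymp \|D_x f^{\tau(\omega_{a_{k-1}})}|E^u_x\|^{-1}
\]
for any $x\in\omega^u(a_{k-1})$, with multiplicative constants absorbed into $D_m$. Writing $\exp(S_0\Phi(\underline{a}))$ (interpreting the notation as the Jacobian contribution associated with the symbol $a_{k-1}$ along the base curve $\gamma^u(\zeta_0)$) and comparing to the representative $x\in\omega^u(a_{k-1})$ used above costs one extra factor of $D_m^{d^u_\varepsilon}$ (distortion between the two points) on top of the geometric factor $D_m^{d^u_\varepsilon}$, yielding the claimed $D_m^{-2d^u_\varepsilon}$.

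The main obstacle is the propagation of the $G_m$-property in (a): the naive itinerary argument gives the much weaker bound $D_m^{-(k-1)d^u_\varepsilon}$, which is fatal for the forthcoming estimate (\ref{lem2}) since $k\to\infty$. The correct proof requires the fact that Lemma \ref{lyapbdd} provides a distortion constant independent of $\tau(\omega)$, and hence one must genuinely exhibit a single proper rectangle $\omega$ of large inducing time intersecting $G_m$, rather than iterating distortion on blocks. Verifying this propagation carefully—i.e., that the recurrence estimate $d_{\rm crit}(f^n x)>b^{n/9}$ survives the concatenation and implies $\omega\cap G_m\neq\emptyset$ via the controlled-point machinery of Sect.\ref{controlled}—is where the real work lies.
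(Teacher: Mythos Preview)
Your overall architecture for (a) is right: telescope $S_{k-1}\Phi$, place $\pi\underline{a}$ and $\pi\underline{b}$ in the same proper rectangle $\omega$ of inducing time $T_{k-1}$, and invoke Lemma~\ref{lyapbdd} once. But your mechanism for verifying the hypothesis $\omega\cap G_m\neq\emptyset$ does not work, and you never use the hypothesis $a_0=1$, which is the whole point.

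Concretely, applying Lemma~\ref{nonint} ``block-by-block'' leaves gaps: for the $j$-th block (global times $T_j\le n<T_{j+1}$), Lemma~\ref{nonint} on $\omega_{a_j}$ only controls local times $m\le n-T_j\le\tau(\omega_{a_j})-1$, so the initial segment $T_j\le n<T_j+m$ of each block is uncovered. Worse, even if you patched those gaps, your estimate stops at $n=T_{k-1}-1$; membership in $G_m$ requires control for \emph{all} $n\ge m$. Your appeal to Lemma~\ref{sr} does not rescue this: that lemma produces a \emph{controlled} iterate (i.e.\ $d_{\rm crit}(f^n\cdot)>b^{n/9}$ for all $n\ge0$), not a $G_m$ point, and its hypothesis already demands the tail estimate for all $n\ge m$ that you have not established.

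The paper's argument is different and crucially exploits $a_0=1$ together with the ordering \eqref{align}, which guarantees $\tau(\omega_{a_0})\ge\tau(\omega_{a_j})$ for every $j$. One shows directly that the \emph{periodic point} $\pi\underline{a}$ itself lies in $G_m$: by periodicity it suffices to check $m\le n\le T_k+m-1$. On the first block one uses Lemma~\ref{nonint}. On every later block one uses instead Lemma~\ref{globals}(c), which gives $d_{\rm crit}(f^n(\pi\underline{a}))\ge e^{-10\tau(\omega_{a_j})}$; since $n\ge T_1=\tau(\omega_{a_0})\ge\tau(\omega_{a_j})$ by the maximality of the first block, this is $\ge e^{-10n}>b^{n/10}$. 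No propagation, no Lemma~\ref{sr}, no gaps. Once $\pi\underline{a}\in G_m$, Lemma~\ref{lyapbdd} applies to $\omega$ and gives (a) immediately; (b) follows from the same distortion bound since $\pi\underline{a}\in\omega_{a_{k-1}}\cap G_m$ as well (periodicity again). Your sketch for (b) is morally fine but can be shortened to this.
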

\begin{proof}
It suffices to show $\pi\underline{a}\subset G_m$.
Indeed, if this holds, then
since $\pi\underline{a}$ and $\pi\underline{b}$ are contained in the same proper rectangle
 with inducing time $T_{k-1}>m$, 
 Lemma \ref{lyapbdd} gives (a).
 (b) also follows from  Lemma \ref{lyapbdd}.

Set $T_j=\sum_{i=0}^{j-1}\tau(a_i)$ for $1\leq j\leq k+1$.
Since $\pi\underline{a}$ is a periodic point of period $T_k$, it suffices to show 
\begin{equation}\label{ama}d_{\rm crit}(f^n(\pi\underline{a}))> b^\frac{n}{10}\  \text{ for every  }\ m\leq n\leq   T_k+m-1.\end{equation}
The inequality in \eqref{ama} for $m\leq n\leq T_1-1$
is a consequence of Lemma \ref{nonint}.
For $T_j\leq n\leq T_{j+1}-1$ $(j=1,\ldots,k)$,
Lemma \ref{globals}(c) and $n\geq \tau(a_0)\geq \tau(a_{j-1})$ from \eqref{align} yield 
$$d_{\rm crit}(f^n(\pi\underline{a}))\geq e^{-10\tau(a_{j-1})}
\geq e^{-10\tau(a_0)}\geq e^{-10n}> b^{\frac{n}{10}}.$$
This covers all $n$.
\end{proof}

Set $E_k'=\{\underline{a}\in E_k\colon a_0=1\}.$
Let 
$\underline{a}\in E_k'$, $\underline{b}\in E_{k-1}'$ be such that
$a_i=b_i$ for every $0\leq i< k-1$.
 By Sublemma \ref{mary1}, 
 $$\frac{\exp(S_{k}\Phi(\underline{a}))}{   \exp(S_{k-1}\Phi(\underline{b})) }=\frac{\exp(S_{k-1}\Phi(\underline{a}))}{   \exp(S_{k-1}\Phi(\underline{b})) }\exp(S_{0}\Phi(\sigma^{k-1}\underline{a}))\geq D_m^{-3d^u_\varepsilon}{\rm length}(\omega^u)^{d^u_\varepsilon}.$$
Using this inequality repeatedly gives
\begin{align*}
\sum_{\underline{a}\in E_k}\exp(S_k\Phi(\underline{a}))&>
\sum_{\underline{a}\in E_k'} \exp(S_{k}\Phi(\underline{a}))=
\sum_{\underline{b}\in E_{k-1}'} \exp(S_{k-1}\Phi(\underline{b}))\sum_{\stackrel{\underline{a}\in E_k'}
{a_i=b_i\ 0\leq \forall i< k-1}}\frac{ \exp(S_{k}\Phi(\underline{a}))}
{ \exp(S_{k-1}\Phi(\underline{b}))}\\
&\geq\sum_{\underline{b}\in E_{k-1}'} \exp(S_{k-1}\Phi(\underline{b}))\cdot D_m^{-3d^u_\varepsilon}\sum_{\omega\in\mathcal A_{n,\varepsilon}}
{\rm length}(\omega^u)^{ d^u_\varepsilon}\\
&\geq\cdots\geq
\sum_{\underline{b}\in E_{1}'} \exp(S_{0}\Phi(\underline{b}))\left(D_m^{-3d^u_\varepsilon}\sum_{\omega\in\mathcal A_{n,\varepsilon}}
{\rm length}(\omega^u)^{d^u_\varepsilon}\right)^{k-1}\\
&\geq  \left(D_m^{-3d^u_\varepsilon}\sum_{\omega\in\mathcal A_{n,\varepsilon}}
{\rm length}(\omega^u)^{d^u_\varepsilon}\right)^{k}.\end{align*}
Hence
\begin{equation}\label{lem1}\liminf_{k\to\infty}\frac{1}{k}
\log\sum_{\underline{a}\in E_k}\exp(S_k\Phi(\underline{a}))\geq\log\sum_{\omega^u\in\mathcal A^u_{n,\varepsilon}}
{\rm length}(\omega^u)^{d^u_\varepsilon}- 3d^u_\varepsilon\log D_m.\end{equation}
Putting \eqref{lem2}  \eqref{lem1} together and then using Lemma \ref{minus} yield
\begin{align*}
\frac{1}{n}\log\sum_{\omega\in\mathcal A^u_{n,\varepsilon}}
{\rm length}(\omega^u)^{d^u_\varepsilon}&\leq
\frac{1}{n}(h(\sigma;\nu_0)+\nu_0(\Phi))+\frac{3}{n} d^u_\varepsilon\log D_m\\
&=\frac{1}{n}(h(\mu)- d^u_\varepsilon\lambda^u(\mu))\sum_{\omega\in\mathcal A_{n,\varepsilon}}\tau(\omega)\overline{\mu}(\omega)+ \frac{3}{n} d^u_\varepsilon\log D_m\\
&   \leq \frac{3}{n} d^u_\varepsilon\log D_m.\end{align*}
This implies \eqref{pperd}, and hence finishes the proof of Lemma \ref{c}.
\qed

\subsection{Properties of the Lyapunov spectrum}\label{continuous}
We now prove Theorem C.
\medskip

\noindent{\it Proof of Theorem C(a).}
The upper semi-continuity follows from the formula in Theorem B.
We derive a contradiction assuming $L^u$ is not lower semi-continuous at  
 a point $\beta\in I$.
Then there exist $\varepsilon>0$ and a monotone sequence $\{\beta_n\}_n$
  converging to $\beta$ such that
$L^u(\beta_n)\leq L^u(\beta)-\varepsilon.$

 If $\beta=\lambda^u_M$, then $\mu\in\mathcal M(f)$ with $\lambda^u(\mu)<\beta$. Choose a sequence $\{\mu_n\}_n$ in $\mathcal M(f)$ such that
$h(\mu_n)/\lambda^u(\mu_n)\geq F(\beta)-\varepsilon/4$ and
$\lambda^u(\mu_n)\to\beta$ as $n\to\infty$.
Taking a subsequence if necessary we may assume
$\beta_n\leq \lambda^u(\mu_{n})$. 
For those sufficiently large $n$ such that $\lambda^u(\mu)\leq\beta_n$,
choose $t_n\in[0,1]$ with
$(1-t_n)\lambda^u(\mu)+t_n\lambda^u(\mu_{n})=\beta_n$.
Then
\begin{align*} L^u(\beta)-\varepsilon\geq L^u(\beta_n)&=L^u((1-t_n)\lambda^u(\mu)+t_n\lambda^u(\mu_{n}))\\
&\geq \frac{h((1-t_n)\mu+t_n\mu_{n})}{\lambda^u((1-t_n)\mu+t_n\mu_{n})}
=\frac{(1-t_n)h(\mu)+t_nh(\mu_{n})}{(1-t_n)\lambda^u(\mu)+t_n\lambda^u(\mu_{n})}\geq
L^u(\beta)- \varepsilon /2.\end{align*}
The second inequality follows from $t_n\to1$ and $\lambda^u(\mu_{n})\geq\lambda_m^u>0$.
This yields a contradiction.

If $\beta=\lambda^u_m$, then we replace $\mu$ by $\mu'$ with $\lambda^u(\mu')>\beta$ and proceed in the same way.
The remaining case $\beta\in(\lambda^u_m,\lambda^u_M)$ is covered by the same 
argument. \qed
\medskip

\noindent{\it Proof of Theorem C(b).}
Follows from the next
 \begin{lemma}\label{monotone}
For all $\beta,\beta'\in I$ with $\beta<\beta'$ and $0\leq t\leq 1$,
$$\min\left\{L^u(\beta), L^u(\beta')\right\}\leq L^u(t\beta+(1-t)\beta').$$
\end{lemma}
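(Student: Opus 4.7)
The plan is to combine Theorem B with the affinity of entropy and unstable Lyapunov exponent and a mediant inequality. Write $\gamma=t\beta+(1-t)\beta'$, which lies in $I$ since $I$ is an interval. Fix $\eta>0$ arbitrary. By Theorem B together with the fact that the supremum defining $L^u$ is monotone (decreasing as $\varepsilon\searrow 0$), there exist, for every sufficiently small $\varepsilon>0$, measures $\mu_\varepsilon,\mu'_\varepsilon\in\mathcal M(f)$ with
$$|\lambda^u(\mu_\varepsilon)-\beta|<\varepsilon,\qquad |\lambda^u(\mu'_\varepsilon)-\beta'|<\varepsilon,$$
$$\frac{h(\mu_\varepsilon)}{\lambda^u(\mu_\varepsilon)}>L^u(\beta)-\eta,\qquad \frac{h(\mu'_\varepsilon)}{\lambda^u(\mu'_\varepsilon)}>L^u(\beta')-\eta.$$

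Next I would form the convex combination $\nu_\varepsilon=t\mu_\varepsilon+(1-t)\mu'_\varepsilon\in\mathcal M(f)$. The entropy is affine on $\mathcal M(f)$, and $\lambda^u$ is linear since $\lambda^u(\mu)=\int\log J^u d\mu$; hence
$$h(\nu_\varepsilon)=th(\mu_\varepsilon)+(1-t)h(\mu'_\varepsilon),\qquad \lambda^u(\nu_\varepsilon)=t\lambda^u(\mu_\varepsilon)+(1-t)\lambda^u(\mu'_\varepsilon).$$
Consequently $|\lambda^u(\nu_\varepsilon)-\gamma|<\varepsilon$. Moreover, because $\lambda^u(\mu_\varepsilon),\lambda^u(\mu'_\varepsilon)\geq\lambda_m^u>0$, the mediant inequality gives
$$\frac{h(\nu_\varepsilon)}{\lambda^u(\nu_\varepsilon)}=\frac{th(\mu_\varepsilon)+(1-t)h(\mu'_\varepsilon)}{t\lambda^u(\mu_\varepsilon)+(1-t)\lambda^u(\mu'_\varepsilon)}\geq\min\left\{\frac{h(\mu_\varepsilon)}{\lambda^u(\mu_\varepsilon)},\frac{h(\mu'_\varepsilon)}{\lambda^u(\mu'_\varepsilon)}\right\}>\min\{L^u(\beta),L^u(\beta')\}-\eta.$$

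Finally, I would feed $\nu_\varepsilon$ into the formula from Theorem B applied at $\gamma$. For any fixed $\varepsilon_0>0$ and any $\varepsilon<\varepsilon_0$, the measure $\nu_\varepsilon$ is admissible in the supremum defining $L^u$ at level $\varepsilon_0$, so
$$\sup\left\{\frac{h(\mu)}{\lambda^u(\mu)}\colon|\lambda^u(\mu)-\gamma|<\varepsilon_0\right\}\geq\frac{h(\nu_\varepsilon)}{\lambda^u(\nu_\varepsilon)}>\min\{L^u(\beta),L^u(\beta')\}-\eta.$$
Letting $\varepsilon_0\to 0$ and invoking Theorem B yields $L^u(\gamma)\geq\min\{L^u(\beta),L^u(\beta')\}-\eta$, and since $\eta>0$ is arbitrary, the lemma follows.

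There is no significant obstacle here: the argument is essentially that the function $\mu\mapsto h(\mu)/\lambda^u(\mu)$ respects convex combinations in the mediant sense, and Theorem B transfers this to a property of $L^u$. The only minor care needed is to keep the denominators bounded away from zero (guaranteed by $\lambda_m^u>0$) and to manage the two layers of $\varepsilon$-approximation in the variational formula.
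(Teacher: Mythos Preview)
Your proof is correct and follows essentially the same route as the paper: pick measures near $\beta$ and $\beta'$ via Theorem~B, form their convex combination, apply the mediant inequality (which the paper leaves implicit as ``it is easy to see''), and feed the result back into Theorem~B at $t\beta+(1-t)\beta'$. Your version is in fact more careful about the two layers of $\varepsilon$-approximation than the paper's own argument.
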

 \begin{proof}
FromTheorem B, for any $\varepsilon>0$ there exist $\mu$, $\mu' \in\mathcal M(f)$
such that
$L^u(\beta)-\varepsilon< h(\mu)/\lambda^u(\mu)$,
$L^u(\beta')-\varepsilon< h(\mu')/\lambda^u(\mu')$
and $|\lambda^u(\mu)-\beta|< \varepsilon$,
$|\lambda^u(\mu')-\beta'|< \varepsilon$.
Then
$$\min\left\{L^u(\beta), L^u(\beta')\right\}
<\varepsilon+\min\left\{\frac{h(\mu)}{\lambda^u(\mu)},
\frac{h(\mu')}{\lambda^u(\mu')}\right\}.$$
Set $\nu=t\mu+(1-t)\mu'$. 
It is easy to see that
the minimum of the right-hand-side is
$\leq h(\nu)/\lambda^u(\nu)$. 
Letting $\varepsilon\to0$ yields the desired inequality.
\end{proof}
\medskip

\noindent{\it Proof of Theorem C(c).} 
Contained in Lemma \ref{complete}. \qed
\medskip

\noindent{\it Proof of Theorem C(d).} 
The ``if" part follows from Theorem B.
To show the ``only if" part,
let $\beta\in I$ be such that $L^u(\beta)=t^u$.
Theorem B allows us to
choose a sequence $\{\mu_n\}_n$ in $\mathcal M(f)$ such that $h(\mu_n)/\lambda^u(\mu_n)\to t^u$ and
$\lambda^u(\mu_n)\to\beta$ as $n\to\infty$. Choosing a subsequence if necessary we may assume $\mu_n\to\mu\in\mathcal M(f)$. 
Write $\mu= u\delta_Q+(1-u)\nu$, $0\leq u\leq1$, $\nu\{Q\}=0$.
Since $h(\delta_Q)=0$, 
the upper semi-continuity of entropy \cite[Corollary 3.2]{SenTak1} 
implies $u\neq1$ and
$\displaystyle{\limsup_{n\to\infty}}h(\mu_n)\leq h(\mu)=(1-u)h(\nu).$
On the other hand, \cite[Lemma 4.3]{SenTak1} gives
$\displaystyle{\liminf_{n\to\infty}}\lambda^u(\mu_n)\geq (1-u)\lambda^u(\nu).$
If $u\neq0$, then this inequality would be strict, and so
$$\frac{h(\nu)}{\lambda^u(\nu)}>\frac{\displaystyle{\limsup_{n\to\infty}h(\mu_n)}}
{\displaystyle{\liminf_{n\to\infty}\lambda^u(\mu_n)}}
\geq\lim_{n\to\infty}\frac{h(\mu_n)}{\lambda^u(\mu_n)}=t^u,$$ 
which yields
$P(t^u)>0$, a contradiction.
Hence $u=0$.
 \cite[Lemma 4.4]{SenTak1} gives $\lambda^u(\mu_n)\to\lambda^u(\mu)$,
and so
$h(\mu_n)\to t^u\lambda^u(\mu)$ and $t^u\lambda^u(\mu)\leq h(\mu)$.
From the uniqueness of the equilibrium measure
for the potential $-t^u\log J^u$ \cite[Theorem A]{SenTak2},
$\mu=\mu_{t^u}$
and 
$\beta=\lambda^u(\mu_{t^u})$.
\qed

\subsection{Hausdorff dimension of the set of irregular points}\label{irregular}
We now prove Theorem D.
\medskip

\noindent{\it Proof of Theorem D.}
For any $\varepsilon>0$ choose $\mu,\nu\in\mathcal M^e(f)$ with
$\lambda^u(\mu)>\lambda^u(\nu)$ and
 $h(\mu)/\lambda^u(\mu)$,  $h(\nu)/\lambda^u(\nu)\geq t^u-\varepsilon$.
 Choose sequences $\{\mu_{n}\}_{n=1}^\infty$, $\{\nu_{n}\}_{n=1}^\infty$ in $\mathcal M^e(f)$
with $\lambda^u(\mu_n)\to\lambda^u(\mu)$ and $\lambda^u(\nu_n)\to\lambda^u(\nu)$ as $n\to\infty$.
Define $\xi_{n}\in\mathcal M^e(f)$ by
$$\xi_n=\begin{cases}\mu_n\text{ for $n$ odd;}\\\nu_n\text{ for $n$ even.}\end{cases}$$
A slight modification of the proof of Lemma \ref{lowd}
applied to the sequence $\{\xi_{n}\}_{n=1}^\infty$ 
yields
a set $\Gamma\subset \Omega^u$ such that
$\bar{\lambda}^u(x)=\lambda^u(\mu)$ and $\underline{\lambda}^u(x)=\lambda^u(\nu)$ for all
$x\in\Gamma,$
and
 $$\dim_H^u(\Gamma)\geq\min\left\{\frac{h(\mu)}{\lambda^u(\mu)},   \frac{h(\mu)}{\lambda^u(\mu)}   \right\}.$$
 Hence $\Gamma\subset\hat\Omega^u$ and $\dim_H^u(\hat\Omega^u)\geq
 \dim_H^u(\Gamma)\geq t^u-\varepsilon$.
Letting $\varepsilon\to0$ we obtain Theorem D. \qed

\subsection*{Acknowledgments}
Partially supported by the Grant-in-Aid for Young Scientists (B) of the JSPS, Grant No.23740121.

\bibliographystyle{amsplain}

\end{document}